\renewcommand{\emptyset}{\varnothing}
\theoremstyle{plain} 
\newtheorem{theorem}{Theorem}[section] 
\newtheorem{corollary}[theorem]{Corollary} 
\newtheorem{lemma}[theorem]{Lemma}
\newtheorem{prop}[theorem]{Proposition}
\newtheorem{defn}[theorem]{Definition}
\theoremstyle{definition}
\newtheorem{example}{Example}[section]
\theoremstyle{remark}
\newtheorem{remark}[theorem]{Remark}
\numberwithin{equation}{section} 
\newcommand{\R}{\ensuremath{\mathbb{R}}}
\newcommand{\Rn}{\ensuremath{\mathbb{R}^n}}
\newcommand{\eps}{\ensuremath{\varepsilon}}
\newcommand{\lr}[1]{\left( #1 \right)}
\newcommand{\eqlab}[1]{\begin{equation}  \begin{aligned}#1 \end{aligned}\end{equation}} 
\newcommand{\bgs}[1]{\begin{equation*} \begin{aligned}#1\end{aligned}\end{equation*}} 
  \newcommand{\sys}[2][]{\begin{equation*}#1  \left\{\begin{aligned}#2\end{aligned}\right.\end{equation*}}
\newcommand{\alig}[1] {\left\{\begin{aligned}#1 \end{aligned}\right.}
\newcommand{\I}{\mathcal I}
\newcommand{\Ha}{\mathcal H}
\newcommand{\Ll}{\mathcal L}
\newcommand{\Co}{\mathcal C}
\begin{document}
\date{}
\author{Claudia Bucur}

\address{Claudia Bucur: School of Mathematics and Statistics\\ The University of Melbourne \\  813 Swanston Street \\ Parkville VIC 3010, Australia}

\email{claudia.bucur@unimi.it}

\author{Luca Lombardini}
\address{Luca Lombardini: Dipartimento di Matematica\\ 
Universit\`a degli Studi di Milano \\ Via Cesare Saldini 50 
\\ 20133 Milano-Italy\\
and 
Facult\'e des Sciences\\
Universit\'e de Picardie Jules Verne\\ 
33 Rue Saint Leu\\ 
80039 Amiens CEDEX 1-France}
\email{luca.lombardini@unimi.it}

\author{Enrico Valdinoci}
\address{Enrico Valdinoci: 
Department of Mathematics and Statistics\\
University of Western Australia\\
35 Stirling Hwy\\ Crawley WA 6009-Australia\\
and
Dipartimento di Matematica\\ Universit\`a degli 
Studi di Milano \\ Via Cesare Saldini 50 \\ 20133 Milano-Italy\\
and
Istituto di Matematica Applicata e Tecnologie Informatiche\\
Consiglio Nazionale delle Ricerche\\
Via Ferrata 1\\ 27100 Pavia-Italy
}
\email{enrico@mat.uniroma3.it}
\keywords{Nonlocal minimal surfaces, stickiness phenomena, loss of regularity, strongly nonlocal regime.}
\subjclass[2010]{49Q05, 35R11, 58E12.}
\email{}
\thanks{}

\title[]{Complete stickiness of nonlocal minimal surfaces
for small values of the fractional parameter}\thanks{This work has been supported by
the Australian Research Council Discovery Project DP170104880 NEW ``Nonlocal
Equations at Work''. The third author is member of INdAM/GNAMPA}
\begin{abstract}In this paper, we consider the asymptotic behavior
of the fractional mean curvature when $s\to 0^+$. Moreover, we deal with the behavior of $s$-minimal surfaces when the fractional parameter $s\in(0,1)$ is small, in a bounded and connected open set with $C^2$ boundary $\Omega\subset \Rn$. We classify the behavior of $s$-minimal surfaces with respect to the fixed exterior data (i.e. the $s$-minimal set fixed outside of $\Omega$). So, for $s$ small and depending
on the data at infinity,
the $s$-minimal set can be either empty in $\Omega$, fill all $\Omega$, 
or possibly develop a wildly oscillating boundary.

Also, we prove the continuity of the fractional mean curvature in all variables, for $s\in [0,1]$. Using this, we see that as the parameter $s$ varies, the fractional mean curvature may change sign. 
\end{abstract}

\maketitle

\tableofcontents

\section{Introduction and main results}

Since introduced by Caffarelli, Roquejoffre and Savin in 2010 in \cite{nms}, nonlocal minimal surfaces have become a very interesting subject of study.
The non-expert reader may take a look at \cite{lukes, bucval, senonlocal} and the references cited therein for an introduction of some recent results on this argument.

\medskip

In this paper, we deal with the behavior of nonlocal minimal surfaces when the fractional parameter (that we denote by $s\in (0,1)$) is small. In particular
\begin{itemize}
\item we give the asymptotic behavior of the fractional mean curvature as $s\to 0^+$, 
\item we classify the behavior of $s$-minimal surfaces, in dependence of the exterior data at infinity.
\end{itemize}  
Moreover, we prove the continuity of the fractional mean curvature in all variables for $s\in [0,1]$.


 \medskip
 
As a first thing, let us recall that the fractional perimeter is defined as
\begin{equation} \label{nmspf1} P_s(E,\Omega) := \Ll_s(E\cap \Omega,\Co E)
+\Ll_s(E\setminus \Omega,\Omega \setminus E), \end{equation}
where the interaction $\Ll_s(A,B)$ between two disjoint subsets of $\Rn$ is
	\begin{equation}
		\Ll_s(A,B):=\int_A \int_B \frac{dx\, dy}{|x-y|^{n+s}}
			=\int_{\Rn} \int_{\Rn} \frac{ \chi_A(x) \chi_B (x) }{|x-y|^{n+s}}\, dx \, dy.
	\end{equation}
	Let~$\Omega$ be an open set of~$\R^n$. 
We say that a set $E\subset \Rn$ is $s$-minimal in~$\Omega$
if $P_s(E,\Omega)$ is finite and if, for any competitor (for any set~$F$ such that $E\setminus\Omega =F\setminus \Omega$), we have that
	\[ P_s(E,\Omega) \leq P_s(F,\Omega).\]
The boundary of an $s$-minimal set is referred to as an $s$-minimal surface.
Furthermore, we introduce the $s$-fractional mean curvature of a set $E$ at a point $q\in\partial E$ (as the fractional counterpart of the classical mean curvature). It is defined as the principal value integral
\[\I_s[E](q):=P.V.\int_{\R^n}\frac{\chi_{\Co E}(y)-\chi_E(y)}{|y-q|^{n+s}}\,dy,\]
that is
\[\I_s[E](q):=\lim_{\rho\to0^+}\I_s^\rho[E](q),\qquad\textrm{where}\qquad
\I_s^\rho[E](q):=\int_{
\Co B_\rho(q)}\frac{\chi_{\Co E}(y)-\chi_E(y)}{|y-q|^{n+s}}\,dy.\] 
For the main properties of the fractional mean curvature, we refer e.g. to \cite{Abaty}.\\

Let us also introduce here the notation for the area of the $(n-1)$-dimensional sphere as 
 \[ \omega_n=\mathcal H^{n-1}\left( \{ x\in \Rn \, \big| \, |x|=1\}\right) 
 ,\]
 where  $\mathcal H^{n-1}
 $ is the $(n-1)$-dimensional Hausdorff  measure. The volume of the $n$-dimensional unit ball is 
  \[\Big|\{ x\in \Rn \, \big| \, |x|<1\} \Big|=\frac{\omega_n}n.\]
We denote also
  \[\omega_0=0.\]

The asymptotic behavior of nonlocal minimal surfaces as $s$ reaches $0$ or $1$ is, of course, 
a very interesting matter. Indeed, the small $s$ regime corresponds
to that of ``very strongly nonlocal interactions'' and, for small values
of~$s$, the regularity theory for nonlocal minimal surfaces may degenerate.\\
As $s\to 1^-$, one obtains the classical counterpart of the objects under study,
as the following known results show. 
For a set $E \subset \Rn$ with $C^{1,\gamma}$ boundary in $B_R$ for some $R>0$ and $\gamma\in(0,1)$, for almost any $r<R$ and up to constants one has indeed that
  \[ \lim_{s \to 1^-} (1-s)P_s(E,B_r)= P(E,B_r),\]
(see Theorem 1 in \cite{uniform}). A refined version of this asymptotic property can be obtained by making use
of Theorem 1 in \cite{davila} (see Theorem 1.8 in \cite{Myfractal}).

Moreover (see Theorem 12 in \cite{Abaty}, and \cite{regularity}) for a set $E\subset \Rn$ with $C^2$ boundary and any $x\in \partial E$, one has that
\[ \lim_{s \to 1} \mathcal (1-s)\I_s[E] (x) = \omega_{n-1}H[E](x),\]
where $H$ is the classical mean curvature of $E$ at the point $x$ (with the convention that we take $H$ such that the curvature of the ball is a positive quantity). \textcolor{black}{We notice that for $n=1$, we have that
\[  \lim_{s \to 1} \mathcal (1-s)\I_s[E] (x) = 0,\]
which is consistent with the notation $\omega_0=0$. See also Remark \ref{nuno}.}

Finally, as $s\to1^-$, $s$-minimal sets converge to minimizers of the classical perimeter, both in a ``uniform sense'' (see \cite{uniform, regularity}) and in a $\Gamma$-convergence sense (see \cite{gammaconv}).
As a consequence, one is able to prove (see \cite{regularity}) that for $s$ sufficiently close to 1, nonlocal minimal surfaces have the same regularity of classical minimal surfaces.
See also \cite{senonlocal} for a recent and quite comprehensive survey of the properties of $s$-minimal sets when $s$ is close to 1. 

\smallskip

As $s\to 0^+$, the asymptotic behavior is more involved and some surprising behavior may arise.  This is due to the fact that as $s$ gets smaller, the nonlocal contribution to the perimeter becomes more and more important, and the local behavior loses influence. Some precise
results in this sense were achieved in \cite{asympt1}. There, in order to mathematically encode the behavior at infinity of a set, the authors introduce the following quantity:
   \eqlab{\label{alpha} \alpha(E)=\lim_{s\to 0^+} s\int_{\Co B_1} \frac{\chi_E(y)}{|y|^{n+s}}\, dy,  }
  (see formula~(2.2)  in~\cite{asympt1}). The set function $\alpha(E)$ appears naturally when looking at the
behavior near $s=0$ of the fractional perimeter (see \cite{asympt1}). Indeed, let $\Omega$ be a bounded open set with $C^{1,\gamma}$ boundary, for some $\gamma\in (0,1)$, and $E \subset \Rn$ be a set with finite $s_0$-perimeter, for some $s_0\in (0,1)$. If $\alpha(E)$ exists, then
		\bgs{  \lim_{s\to 0^+} sP_s(E,\Omega)= &\; \alpha(\Co E) |E\cap \Omega| + \alpha(E) |\Co E \cap \Omega| 	.}

On the other hand, the asymptotic behavior for $s\to 0^+$ of the fractional mean curvature is studied in this paper
(see also \cite{senonlocal} for the particular case in which the set $E$ is bounded).

Moreover, as $s \to 0^+$, $s$-minimal sets may exhibit a rather unexpected behavior.
For instance, in \cite[Theorem 1.3]{boundary} it is proved that fixing the first quadrant of the plane as boundary data, quite surprisingly the $s$-minimal set in $B_1\subset\R^2$ is empty in $B_1$ for $s$ small enough.
The main results in this paper take their inspiration from this result.

\medskip


Let us mention that the stickiness phenomena described in \cite{boundary}
and in this paper are specific for nonlocal minimal surfaces (since
classical minimal surfaces cross transversally the boundary of a 
convex domain). 

Interestingly, these stickiness phenomena
are not present in the case of the fractional Laplacian,
where the boundary datum of the Dirichlet problem is attained continuously under rather
general assumptions, see \cite{MR3168912},
though solutions of $s$-Laplace
equations are in general not better than $C^s$ at the boundary, hence
the uniform continuity degenerates as~$s\to0^+$. Also,
solutions of $s$-Laplace equations with data growing like~$|x|^\alpha$
with~$\alpha\in(0,2)$ diverge as~$s\to(\alpha/2)^+$, as can be 
checked using the fractional
Poisson kernel,
and we plan to investigate in details
in a future project the continuity properties in dependence of suitably scaled
singular data at infinity.

On the other hand, in case of fractional harmonic functions, a partial counterpart of
the stickiness phenomenon is, in a sense, given by the boundary explosive
solutions constructed in \cite{MR3393247,MR2985500}
(namely, in this case, the boundary of the subgraph of the fractional harmonic function
contains vertical walls).
Other stickiness phenomena in nonlocal settings will be also studied
in a forthcoming article by the first two authors.\medskip

This paper is organized as follows. We set some notations and recall some known results in the following Subsection \ref{defnsknwown}. Also, we give some  preliminary results on the contribution from infinity of sets in Section \ref{contr_infty}.

In Section \ref{classify}, we consider exterior data ``occupying at infinity'' in measure, with respect to an appropriate weight, less than an half-space. To be precise
\bgs{\label{mainhyp} \alpha(E_0)<\frac{\omega_n}{2}. }
In this hypothesis:
\begin{itemize}
\item In Subsection \ref{sectnotfull} we give some asymptotic estimates of the density, in particular showing that when $s$ is small
enough $s$-minimal sets cannot fill their domain. 
\item In Subsection \ref{estimatecurvature} we give some estimates on the fractional mean curvature. In particular we show that if a set $E$ has an exterior tangent ball of radius $\delta$ at some point $p\in\partial E$, then the $s$-fractional mean curvature of $E$ in $p$ is strictly positive for every $s<s_\delta$. 
\item In Subsection \ref{alternative} we prove that when the fractional parameter is small and the exterior data at infinity occupies (in measure, with respect to the weight) less than half the space, then $s$-minimal sets completely stick at the boundary (that is, they are empty inside the domain), or become ``topologically dense'' in their domain. A similar result, which says that $s$-minimal sets fill the domain or their complementaries become dense, can be obtained in the same way, when the exterior data occupies in the appropriate sense more than half the space (so this threshold is somehow optimal).
\item Subsection \ref{sticky} narrows the set of minimal sets that become dense in the domain for $s$ small. As a matter of fact, if the exterior data does not completely surround the domain, $s$-minimal sets completely stick at the boundary. 
\end{itemize}
In Section \ref{sectexamples}, we provide some examples in which we are able to explicitly compute the contribution from infinity of sets. 
Section \ref{cont} contains the continuity of the fractional mean curvature operator in all its variables for $s\in[0,1]$. As a corollary, we show that for $s\to 0^+$ the fractional mean curvature at a regular point of the boundary of a set, takes into account only the behavior of that set at infinity.   The continuity property implies that the mean curvature at a regular point on the boundary of a set may change sign, as $s$ varies,
depending on the signs of the two asymptotics as $s\to1^-$ and $s\to0^+$.

In Appendix \ref{appendicite} and Appendix \ref{appendicite2} we collect some useful results that we use in this paper. Worth mentioning are Appendixes \ref{brr2} and \ref{appendicite3}. The first of the two gathers some known results on the regularity of $s$-minimal surfaces, so as to state the Euler-Lagrange equation pointwisely in the interior of $\Omega$. In the latter we prove that the Euler-Lagrange equation holds (at least as a inequality) at $\partial E \cap \partial \Omega$, as long as the two boundaries do not intersect ``transversally''.


\subsection {Statements of the main results}

 We remark that the quantity $\alpha$ (defined in \eqref{alpha}) may not exist (see Example 2.8 and 2.9 in \cite{asympt1}). For this reason, we also define
\eqlab{\label {baralpha1} \overline \alpha (E):= \limsup_{s\to 0^+} s\int_{\Co B_1} \frac{\chi_E(y)}{|y|^{n+s}}\, dy ,\quad \quad \underline \alpha(E) := \liminf_{s\to 0^+} s\int_{\Co B_1} \frac{\chi_E(y)}{|y|^{n+s}}\, dy.}

This set parameter plays an important role in describing
the asymptotic behavior of the fractional mean curvature as~$s\to0^+$
for unbounded sets. As a matter of fact, the limit as~$s\to0^+$
of the fractional mean curvature for a {\em bounded} set
is a positive, universal constant (independent of the set),
see e.g. (Appendix~B in \cite{senonlocal}).
On the other hand, this asymptotic behavior changes for {\em unbounded}
sets, due to the set function $\alpha(E)$, as described explicitly
in the following result:
  \begin{theorem}\label{asympts}[Proof in Section \ref{cont}]
Let $E\subset\Rn$ and let $p\in\partial E$ be such that $\partial E$ is $C^{1,\gamma}$ near $p$,
for some $\gamma\in(0,1]$. Then
\bgs{& \liminf_{s\to0^+} s\,\I_s[E](p) =\omega_n -2 \overline \alpha(E)\\
& \limsup_{s\to0^+} s\,\I_s[E](p) =\omega_n-2 \underline\alpha(E).}
\end{theorem}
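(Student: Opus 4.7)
The plan is to split the curvature integral into a "near--$p$" and "far--from--$p$" piece, show that the near part contributes nothing in the limit thanks to $C^{1,\gamma}$ regularity, and identify the far part with $\omega_n-2\alpha(E)$ (replacing $\alpha$ by $\overline\alpha,\underline\alpha$ as appropriate). Fix $R>1$ and write, for $\rho\in(0,R)$,
\[
\I_s^\rho[E](p)=\underbrace{\int_{B_R(p)\setminus B_\rho(p)}\frac{\chi_{\Co E}-\chi_E}{|y-p|^{n+s}}\,dy}_{=:A_s^\rho}+\underbrace{\int_{\Co B_R(p)}\frac{1-2\chi_E(y)}{|y-p|^{n+s}}\,dy}_{=:B_s}.
\]

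First I would handle $A_s^\rho$. Since $\partial E$ is $C^{1,\gamma}$ near $p$, a standard cancellation argument (writing $\partial E$ locally as a graph over the tangent hyperplane and comparing the two half--balls) yields a bound
\[
\Bigl|\lim_{\rho\to 0^+}A_s^\rho\Bigr|\le C(E,p,R)
\]
uniform in $s\in (0,1/2]$; see e.g.\ the analogous computation in \cite{Abaty}. Multiplying by $s$, the local piece satisfies $sA_s^\rho\to 0$ as $s\to 0^+$.

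Next I would analyze $B_s$. The $1$--piece is explicit: passing to polar coordinates centered at $p$,
\[
s\int_{\Co B_R(p)}\frac{dy}{|y-p|^{n+s}}=s\,\omega_n\int_R^\infty r^{-1-s}\,dr=\omega_n R^{-s}\xrightarrow[s\to 0^+]{}\omega_n.
\]
It remains to show that for every $p\in\R^n$ and $R>0$, as $s\to 0^+$,
\[
s\int_{\Co B_R(p)}\frac{\chi_E(y)}{|y-p|^{n+s}}\,dy\;\;\text{and}\;\;s\int_{\Co B_1}\frac{\chi_E(y)}{|y|^{n+s}}\,dy
\]
have the same liminf and limsup. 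For this, I would estimate the difference: the contribution on any bounded set is $O(s)$, and on $\{|y|\ge M\}$ for $M\gg|p|+R$ the mean value theorem gives
\[
\biggl|\frac{1}{|y-p|^{n+s}}-\frac{1}{|y|^{n+s}}\biggr|\le \frac{C(n)|p|}{|y|^{n+s+1}},
\]
whose integral over $\Co B_M$, multiplied by $s$, is $O(M^{-1-s})$ and can be made arbitrarily small. Hence the two quantities share the same $\overline\alpha(E)$ and $\underline\alpha(E)$.

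Combining these ingredients, $s\I_s[E](p)=sA_s^\rho+sB_s$ converges, along sequences, to $\omega_n-2\alpha^*(E)$ where $\alpha^*$ is the corresponding subsequential limit in $[\underline\alpha(E),\overline\alpha(E)]$. The two displayed identities of the theorem follow by taking liminf and limsup. The one step that requires genuine care, rather than routine manipulation, is the translation/scaling invariance of $\alpha$ in the above sense; the uniform $C^{1,\gamma}$ bound on $A_s^\rho$ is classical but also needs to be carried out carefully so that the implicit constant does not blow up as $s\to 0^+$.
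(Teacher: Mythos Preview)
Your proposal is correct and follows essentially the same route as the paper. The paper's proof simply specializes Proposition~\ref{propsto0} to the constant sequence $E_k=E$, $q_k=p$: one splits $\I_s[E](p)$ into a local piece over a small cylinder $Q_{r,h}(p)$, an annular piece over $B_R(p)\setminus Q_{r,h}(p)$, and a far piece over $\Co B_R(p)$; the first two are bounded uniformly as $s\to0^+$ (so vanish after multiplying by $s$), and the far piece is identified with $\omega_n-2s\,\alpha_s(0,R,E)$ via Proposition~\ref{unifrq}, which is exactly the translation/radius independence of $\alpha$ that you sketch by the mean value inequality. Two small points of care: the cancellation argument only controls the part of $A_s^\rho$ inside the graph neighborhood, so the contribution over $B_R(p)\setminus B_\delta(p)$ needs a separate (trivial) bound $\omega_n(\delta^{-s}-R^{-s})/s$, and your ``uniform in $s\in(0,1/2]$'' should really read ``uniform for $s$ small, say $s\in(0,\gamma/2]$,'' since the local estimate involves $r^{\gamma-s}/(\gamma-s)$.
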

We notice that if~$E$ is bounded, then $\underline\alpha(E)
=\overline\alpha(E)=\alpha(E)=0$, hence Theorem~\ref{asympts}
reduces in this case to formula~(B.1) in  \cite{senonlocal}.
Actually, we can estimate the fractional mean curvature from below (above) uniformly with respect to the radius of the exterior
(interior) tangent ball to $E$. To be more precise, if there exists an exterior tangent ball at $p\in \partial E$ of radius $\delta>0$, then for every $s<s_\delta$ we have 
\[ \liminf_{\rho \to 0^+} s\,\I^\rho_s[E](p)\geq \frac{\omega_n -2\overline \alpha(E)}4.\]    
More explicitly, we have the following result:
   \begin{theorem}\label{positivecurvature}[Proof in Section \ref{estimatecurvature}]
Let $\Omega\subset\Rn$ be a bounded open set. 
Let $E_0\subset\Co\Omega$ be such that
\eqlab{\label{weak_hp_beta}\overline \alpha(E_0)<\frac{\omega_n}2,}
and let
\[\beta=\beta(E_0):=\frac{\omega_n-2\overline \alpha(E_0)}4.\] We define 
\eqlab{\label{delta_wild_index_def}
\delta_s=\delta_s(E_0):=e^{-\frac{1}{s}\log \frac{\omega_n+2\beta}{\omega_n+\beta}} ,}
for every $s\in(0,1)$.
Then, there exists $s_0=s_0(E_0,\Omega)\in(0,\frac{1}{2}]$ such that, if $E\subset\Rn$ is such that $E\setminus\Omega=E_0$
and $E$ has an exterior tangent ball
of radius (at least) $\delta_\sigma$, for some $\sigma\in(0,s_0)$, at some point $q\in\partial E\cap\overline{\Omega}$, then
\eqlab{\label{unif_pos_curv}\liminf_{\rho\to0^+}\I_s^\rho[E](q)\geq\frac{\beta}{s}>0,\qquad\forall\,s\in(0,\sigma].}
 \end{theorem}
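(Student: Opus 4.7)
The plan is to extract a positive lower bound on $\I_s^\rho[E](q)$ from the exterior tangent ball $B$ of radius $\delta=\delta_\sigma$ at $q$, while controlling the remaining contributions via the hypothesis $\overline\alpha(E_0)<\omega_n/2$. After translating $q$ to the origin and rotating so that $B=B_\delta(\delta e_n)$, I compare with the reflected ball $B^\ast=-B=B_\delta(-\delta e_n)$. Since $B\subset\Co E$ and the kernel $|y|^{-n-s}$ is symmetric under $y\mapsto-y$, pairing $y\in B$ with $-y\in B^\ast$ shows that the total contribution from $(B\cup B^\ast)\setminus B_\rho$ is nonnegative regardless of whether $B^\ast$ lies in $E$ or not (in the worst case $B^\ast\subset E$ the pair-contributions exactly cancel, by symmetry of the kernel). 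This yields the reduction
$$\I_s^\rho[E](q)\geq\int_{\Co(B\cup B^\ast\cup B_\rho)}\frac{\chi_{\Co E}-\chi_E}{|y|^{n+s}}\,dy.$$

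Next, fix $R>0$ large enough that $\Omega\subset B_R(q)$ for every $q\in\overline\Omega$, and split the right-hand side into a near-field piece $A:=B_R\setminus(B\cup B^\ast\cup B_\rho)$ and a far-field piece $A':=\Co B_R$. On $A$, the crude bound $|\chi_{\Co E}-\chi_E|\leq 1$ combined with spherical coordinates centered at $q$ (noting that a ray in direction $\omega\in S^{n-1}$ exits $B\cup B^\ast$ at radius $2\delta|\omega_n|$) gives
$$\int_A\frac{dy}{|y|^{n+s}}\leq\frac{(2\delta)^{-s}}{s}\int_{S^{n-1}}|\omega_n|^{-s}\,d\sigma(\omega)-\frac{\omega_n R^{-s}}{s},$$
where the angular integral converges to $\omega_n$ as $s\to 0^+$. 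On $A'\subset\Co\Omega$ we have $\chi_E=\chi_{E_0}$, so
$$\int_{A'}\frac{\chi_{\Co E}-\chi_E}{|y|^{n+s}}\,dy=\frac{\omega_n R^{-s}}{s}-2\int_{\Co B_R(q)}\frac{\chi_{E_0}(y)}{|y-q|^{n+s}}\,dy,$$
and the last integral is controlled above by $(\overline\alpha(E_0)+o(1))/s$ as $s\to 0^+$, uniformly in $q\in\overline\Omega$, using the translation-invariance properties of $\overline\alpha$ established in Section~\ref{contr_infty}.

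Assembling these estimates and substituting $\overline\alpha(E_0)=\omega_n/2-2\beta$ gives
$$s\,\I_s^\rho[E](q)\geq \omega_n+4\beta-(2\delta_\sigma)^{-s}\omega_n-o(1).$$
The exponent in $\delta_\sigma$ is calibrated precisely so that $\delta_\sigma^{-\sigma}=(\omega_n+2\beta)/(\omega_n+\beta)$; since $\delta_\sigma<1$, this forces $\delta_\sigma^{-s}\leq\delta_\sigma^{-\sigma}$ for all $s\leq\sigma$. Plugging in and using $2^{-s}\leq 1$, elementary algebra simplifies the main term to
$$\omega_n+4\beta-\frac{(\omega_n+2\beta)\omega_n}{\omega_n+\beta}=\frac{\beta(3\omega_n+4\beta)}{\omega_n+\beta}\geq 2\beta.$$
Choosing $s_0$ small enough that all $o(1)$ errors remain below $\beta$ uniformly for $s\leq s_0$ then yields $\I_s^\rho[E](q)\geq\beta/s$ for all sufficiently small $\rho$, and the bound persists under $\liminf_{\rho\to 0^+}$. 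The main obstacle is securing the uniform-in-$q$ control of the integral $s\int_{\Co B_R(q)}\chi_{E_0}(y)/|y-q|^{n+s}\,dy$ by $\overline\alpha(E_0)+o(1)$; this should follow from the preliminary results on $\overline\alpha$ in Section~\ref{contr_infty}, together with the fact that translating or enlarging the excluded ball by bounded amounts does not affect the relevant $\limsup$.
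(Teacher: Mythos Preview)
Your proof is correct and shares the paper's overall strategy: the tangent-ball/reflected-ball symmetrization, a crude bound on the near-field complement, and far-field control of $\int_{\Co B_R(q)}\chi_{E_0}/|y-q|^{n+s}$ via the results of Section~\ref{contr_infty}. The only substantive difference lies in how the near-field integral over $B_R\setminus(B\cup B^\ast)$ is estimated. The paper passes through the convex hull $K_\delta$ of $B\cup B^\ast$: it bounds the ``bowtie'' region $P_\delta=K_\delta\setminus(B\cup B^\ast)$ by $C_0\delta^{-s}/(1-s)$ via Lemma~3.1 of \cite{Graph}, and the remainder $B_R\setminus K_\delta$ by $\omega_n(\delta^{-s}-R^{-s})/s$ using the inclusion $B_\delta(q)\subset K_\delta$. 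Your polar-coordinate computation---observing that a ray in direction $\omega$ exits $B\cup B^\ast$ at radius $2\delta|\omega_n|$ and that $\int_{\mathbb S^{n-1}}|\omega_n|^{-s}\,d\sigma(\omega)\to\omega_n$ as $s\to0^+$---is more elementary and self-contained, avoiding the external lemma; it even buys extra slack (your main term exceeds $3\beta$, thanks in part to the discarded factor $2^{-s}$, whereas the paper's calibration of $\delta_s$ is tight). The uniform-in-$q$ control you flag at the end is exactly the content of \eqref{claimalpha} in Proposition~\ref{unifrq}.
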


Given an open set $\Omega\subset\R^n$ and $\delta\in\R$, we consider the open set
\[\Omega_\delta:=\{x\in\R^n\,|\,\bar{d}_\Omega(x)<\delta\},\]
where $\bar{d}_\Omega$ denotes the signed distance function from $\partial\Omega$, negative inside $\Omega$.

It is well known (see e.g. \cite{GilTru,Ambrosio})
that if $\Omega$ is bounded and $\partial \Omega$ is of class $C^2$, then the distance function is also of class $C^2$
in a neighborhood of $\partial\Omega$. Namely, there exists $r_0>0$
such that
\[\bar{d}_\Omega\in C^2(N_{2r_0}(\partial\Omega)),\quad
\mbox{ where }\quad N_{2r_0}(\partial\Omega):=\{x\in\R^n\,|\,|\bar{d}_\Omega(x)|<2r_0\}.\]
As a consequence, since $|\nabla\bar{d}_\Omega|=1$,
the open set $\Omega_\delta$ has $C^2$ boundary for every $|\delta|<2r_0$.
For a more detailed discussion, see Appendix \ref{A2} and the references cited therein.

The constant $r_0$ will have the above meaning throughout this whole paper.

\smallskip

We give the next definition.
\begin{defn}\label{wild}
   Let $\Omega\subset \Rn$ be an open, bounded set.
   We say that a set $E$ is $\delta$-{dense} in $\Omega$ for some fixed $\delta>0$ if $|B_\delta(x)\cap E|>0$ for any $x\in \Omega$ for which $B_\delta(x)\subset\subset\Omega$.
  \end{defn}
  
\noindent Notice that if $E$ is $\delta$-dense  then $E$ cannot have an exterior tangent ball of radius greater or equal than $\delta$ at any point $p\in \partial E\cap \Omega_{-\delta}$.

\noindent We observe that the notion for a set of
being $\delta$-dense is a ``topological'' notion,
rather than a measure theoretic one. 
Indeed, $\delta$-dense sets need not be ``irregular'' nor ``dense'' in the measure theoretic sense (see Remark \ref{deltadance}).

\smallskip
With this definition and using Theorem \ref{positivecurvature} we obtain the following classification.
\begin{theorem}\label{THM}[Proof in Section \ref{alternative}]
  Let $\Omega$ be a bounded and  connected open set with $C^2$ boundary. Let $E_0\subset \Co \Omega$ such that\[\overline \alpha(E_0)<\frac{\omega_n}{2}.\]  
 Then the following two results hold.\\
  A)  Let $s_0$ and $\delta_s$ be as in Theorem \ref{positivecurvature}. There exists
  $s_1=s_1(E_0,\Omega)\in (0,s_0]$ such that if $s<s_1$ and $E$ is an $s$-minimal set in $\Omega$ with exterior data $E_0$, then either
     \bgs{(A.1) \;  E\cap \Omega=\emptyset \quad  \mbox{ or} \quad\; (A.2)\;  E \mbox{ is } \delta_s-\mbox{dense}.}
 \noindent
 B) Either \\
(B.1) there exists
  $\tilde s=\tilde s(E_0,\Omega)\in (0,1)$ such that if $E$ is an $s$-minimal set in $\Omega$ with exterior data $E_0$ and $s\in(0,\tilde s)$, then
     \bgs{  E\cap \Omega=\emptyset,}
     or \\
    (B.2)    there exist  $\delta_k \searrow 0$, $s_k \searrow 0$ and a sequence of sets  $E_k$ such that each $E_k$ is $s_k$-minimal in $\Omega$ with exterior data $E_0$ and for every $k$
     \bgs{ \partial E_k \cap B_{\delta_k}(x) \neq \emptyset \quad \forall \; B_{\delta_k}(x)\subset\subset \Omega.}
     \end{theorem}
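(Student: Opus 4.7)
For part~(A), the plan is to combine Theorem~\ref{positivecurvature} with the pointwise Euler--Lagrange equation via a sliding tangent-ball argument. Suppose $E$ is $s$-minimal with $|E\cap\Omega|>0$ and not $\delta_s$-dense, so $|B_{\delta_s}(x_0)\cap E|=0$ for some $B_{\delta_s}(x_0)\subset\subset\Omega$. Since $\Omega$ is connected, I continuously translate this empty ball through $\Omega_{-\delta_s}$ until it first meets $E$; the contact point $q\in\partial E\cap\overline\Omega$ carries an exterior tangent ball of radius at least $\delta_s$. Taking $s_1\leq s_0$, Theorem~\ref{positivecurvature} then yields $\liminf_{\rho\to0^+}\I_s^\rho[E](q)\geq\beta/s>0$, which contradicts the Euler--Lagrange equation $\I_s[E](q)=0$ at interior regular points (Appendix~\ref{brr2}) and, for $q\in\partial E\cap\partial\Omega$, the non-transverse Euler--Lagrange inequality $\I_s[E](q)\leq 0$ of Appendix~\ref{appendicite3} (whose hypothesis is met since the exterior tangent ball rules out transverse contact).

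For part~(B), I argue by dichotomy, using~(A) as the engine. If~(B.1) holds there is nothing to prove; otherwise there exist $s_k\searrow 0$ and $s_k$-minimal sets $E_k$ (all with exterior data $E_0$) with $|E_k\cap\Omega|>0$. For $s_k<s_1$, part~(A) gives that each $E_k$ is $\delta_{s_k}$-dense. To obtain~(B.2) it then suffices to show that $E_k$ cannot contain any ball of fixed radius inside $\Omega$ for $k$ large, since this will force $\Co E_k$ to be ``dense'' at a scale $\delta_k^*\searrow 0$, and combined with the $\delta_{s_k}$-density of $E_k$ and the connectedness of a ball this will place $\partial E_k$ in every sufficiently small ball inside~$\Omega$.

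To rule out $B:=B_{\delta_0}(y_k)\subset E_k$ with $B\subset\subset\Omega$, I test $s_k$-minimality of $E_k$ against the competitor $F_k:=E_k\setminus B$: expanding $P_{s_k}(F_k,\Omega)\geq P_{s_k}(E_k,\Omega)$ via~\eqref{nmspf1} and multiplying by $s_k$ gives
\[s_k\,\Ll_{s_k}(A_k\setminus B,B)+s_k\,\Ll_{s_k}(E_0,B)\;\geq\;s_k\,\Ll_{s_k}(B,\Omega\setminus A_k)+s_k\,\Ll_{s_k}(B,\Co E_0\setminus\Omega),\]
where $A_k:=E_k\cap\Omega$. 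Both ``$\Omega$-local'' terms are dominated by $s_k\,\Ll_{s_k}(B,\Omega\setminus B)=s_k\,\Ll_{s_k}(B,\Co B)-s_k\,\Ll_{s_k}(B,\Co\Omega)$, which vanishes as $s_k\to 0$ because both quantities on the right tend to $\omega_n|B|$ for $B\subset\subset\Omega$. The remaining ``long-range'' terms satisfy $\limsup_k s_k\,\Ll_{s_k}(E_0,B)\leq\overline\alpha(E_0)|B|$ and $\liminf_k s_k\,\Ll_{s_k}(B,\Co E_0\setminus\Omega)\geq\underline\alpha(\Co E_0)|B|=(\omega_n-\overline\alpha(E_0))|B|$. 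Passing to the limit yields $\overline\alpha(E_0)\geq\omega_n-\overline\alpha(E_0)$, contradicting $\overline\alpha(E_0)<\omega_n/2$. Hence $\delta_k^*:=\sup\{r>0 : \exists\,B_r(y)\subset\subset\Omega,\,B_r(y)\subset E_k\}$ tends to $0$; setting $\delta_k:=\max(\delta_{s_k},\delta_k^*+1/k)\searrow 0$, every $B_{\delta_k}(x)\subset\subset\Omega$ meets $E_k$ in positive measure (by $\delta_{s_k}$-density) and is not contained in $E_k$ (since $\delta_k>\delta_k^*$), so by connectedness $\partial E_k\cap B_{\delta_k}(x)\neq\emptyset$, which is~(B.2).

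The main technical obstacles I anticipate are, in~(A), handling the case $q\in\partial\Omega$ through the boundary Euler--Lagrange inequality of Appendix~\ref{appendicite3}, and in~(B), ensuring the limit estimates above hold uniformly enough as the center $y_k$ of $B$ varies in the compact set $\overline{\Omega_{-\delta_0}}\subset\Omega$ (handled, if necessary, by passing to a subsequence in which $y_k$ converges).
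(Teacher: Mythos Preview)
Your approach to part~(A) matches the paper's strategy---find an exterior tangent ball of radius $\delta_s$ and contradict the Euler--Lagrange equation via Theorem~\ref{positivecurvature}---but the sliding argument has a gap. If $\overline E\cap\Omega$ lies entirely in the thin strip $\Omega\setminus\overline{\Omega_{-\delta_s}}$ near $\partial\Omega$, sliding a ball through $\Omega_{-\delta_s}$ never meets $E$ and produces no tangent point. The paper's Proposition~\ref{tgball} handles precisely this case by instead dilating the sets $\Omega_{-\rho}$ outward until $\partial\Omega_{-\tilde\rho}$ first touches $\partial E$, and then uses the uniform interior ball condition of $\Omega_{-\tilde\rho}$ (Lemma~\ref{geomlem}, valid because $\delta_s<r_0$) to extract a tangent ball of radius $\ge r_0>\delta_s$. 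In both cases this yields $q\in\partial E\cap\Omega$ strictly in the interior, so the boundary case never arises. Your fallback appeal to Theorem~\ref{EL_boundary_coroll} for $q\in\partial\Omega$ would not be valid anyway: its hypothesis~\eqref{obstaclehp} requires $B_{R_0}(q)\setminus\Omega\subset\Co E$, a condition on the exterior data near $q$, which is not implied by having an exterior tangent ball lying \emph{inside} $\Omega$.

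Your approach to part~(B) is correct but takes a different route from the paper. The paper simply invokes Theorem~\ref{notfull} (the uniform density estimate for $\Co E$) to obtain $|\Co E_{k_h}\cap B_{\delta_h}(x)|>0$ for every $B_{\delta_h}(x)\subset\subset\Omega$, and combines this with the $\delta_{s_{k_h}}$-density of $E_{k_h}$ from part~(A) and connectedness of balls to place $\partial E_{k_h}$ in every such ball. Your competitor argument with $F_k=E_k\setminus B$ is essentially a specialized re-derivation of that density estimate; the uniformity in $y_k$ that you flag is indeed handled cleanly by translation invariance (bounding $\Ll_{s_k}(B_k,\Omega\setminus B_k)\le\Ll_{s_k}(B_{\delta_0},B_{2R}\setminus B_{\delta_0})$, which is center-independent) together with Proposition~\ref{unifrq} for the far-field terms. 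Both approaches work; invoking Theorem~\ref{notfull} directly is shorter and avoids duplicating its proof.
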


We remark here that Definition \ref{wild} allows the $s$-minimal set 
to completely fill $\Omega$. The next theorem states that for $s$ small enough (and $\overline \alpha(E)<\omega_n/2$) we can exclude  this possibility.
\begin{theorem}\label{notfull}[Proof in Section \ref{sectnotfull}]
Let $\Omega\subset\R^n$ be a bounded open set of finite classical perimeter and let $E_0\subset\Co\Omega$ be such that
\[\overline{\alpha}(E_0)<\frac{\omega_n}{2}.\]
For every $\delta>0$ and every $\gamma\in(0,1)$ there exists $\sigma_{\delta,\gamma}=\sigma_{\delta,\gamma}(E_0,\Omega)\in(0,\frac{1}{2}]$ such that if $E\subset\R^n$ is $s$-minimal in $\Omega$, with exterior data $E_0$ and $s<\sigma_{\delta,\gamma}$, then
\eqlab{\label{1666}\big|(\Omega\cap B_\delta(x))\setminus E\big|\ge\gamma\, \frac{\omega_n-2\overline{\alpha}(E_0)}{\omega_n-\overline{\alpha}(E_0)}\big|\Omega\cap B_\delta(x)\big|,\qquad\forall\,x\in\overline{\Omega}.}
\end{theorem}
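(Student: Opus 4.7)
The plan is to combine the $s$-minimality of $E$ with uniform small-$s$ asymptotic estimates on fractional interactions. First, I would fix $x \in \overline{\Omega}$ and $\delta > 0$, set $A := \Omega \cap B_\delta(x)$ and $D := E \cap A$, and consider the admissible competitor $F := E \setminus A$ (which agrees with $E$ on $\Co\Omega$ since $A \subset \Omega$). A direct expansion of $P_s$, using that $E$ is the disjoint union of $F$ and $D$ with $D \subset \Omega$, gives
\[P_s(F, \Omega) - P_s(E, \Omega) = \Ll_s(D, E \setminus D) - \Ll_s(D, \Co E),\]
and the $s$-minimality of $E$ then yields the basic inequality
\[\Ll_s(D, \Co E) \leq \Ll_s(D, E \setminus D). \qquad (\ast)\]

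Next, I would invoke the uniform small-$s$ tail estimates developed in Section \ref{contr_infty}. Since $\overline\alpha(E_0) < \omega_n/2$, for every $\eta > 0$ one can find $R_\eta, s_\eta > 0$ such that, uniformly for $u \in \overline{\Omega}$ and $s \in (0, s_\eta)$,
\[s \int_{\Co E \cap \Co B_{R_\eta}(u)} \frac{dy}{|u-y|^{n+s}} \geq (\omega_n - \overline\alpha(E_0)) - \eta, \quad s \int_{E \cap \Co B_{R_\eta}(u)} \frac{dy}{|u-y|^{n+s}} \leq \overline\alpha(E_0) + \eta.\]
Integrating the first inequality over $u \in D$ gives $s\,\Ll_s(D, \Co E) \geq |D|\bigl((\omega_n - \overline\alpha(E_0)) - \eta\bigr)$. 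For the right-hand side of $(\ast)$ I would split $E \setminus D$ into its far part $E \cap \Co B_{R_\eta}(u)$, which contributes at most $|D|(\overline\alpha(E_0) + \eta)$ after integration, and its near part $(E \setminus D) \cap B_{R_\eta}(u)$, which is bounded using the fact that for a.e. $u \in D$ one has $B_{d(u, \partial A)}(u) \subset A$ and hence disjoint from $E \setminus D$; together with the finite classical perimeter of $\Omega$ (via a Hardy-type estimate on $\int_A d(u, \partial A)^{-s}\,du$), the near contribution is shown to be $o(1)$ as $s \to 0^+$ uniformly in $x$.

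Plugging these estimates into $(\ast)$ produces $|D|\bigl(\omega_n - 2\overline\alpha(E_0) - 2\eta\bigr) \leq o(1)$ as $s \to 0^+$. Since $\omega_n - 2\overline\alpha(E_0) > 0$, choosing $\eta$ small enough and then $s < \sigma_{\delta,\gamma}$ forces $|D|$ to be smaller than $\bigl(1 - \gamma\, \tfrac{\omega_n - 2\overline\alpha(E_0)}{\omega_n - \overline\alpha(E_0)}\bigr)|A|$, uniformly in $x \in \overline\Omega$ (the estimate being trivial for those $x$ with $|A|=0$); rearranging gives \eqref{1666}. The main technical obstacle is the uniform near-field control of $\Ll_s(D, E \setminus D)$ when $x$ is close to $\partial\Omega$: in that case $D$ may contain points arbitrarily near $\partial\Omega$ where the integrand $|u-y|^{-n-s}$ is highly singular, and the finite classical perimeter of $\Omega$ is used precisely to make these contributions vanish uniformly, preventing $\sigma_{\delta,\gamma}$ from depending on $x$.
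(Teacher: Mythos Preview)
Your proposal is correct and takes a genuinely different route from the paper's proof. Both arguments use the same competitor $F=E\setminus A$ with $A=\Omega\cap B_\delta(x)$, but they extract different information from the minimality inequality.

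The paper argues by contradiction: assuming failure along a sequence $(s_k,E_k,x_k)$, it passes to a limit point $x_0$ by compactness of $\overline\Omega$ and then bounds the localized perimeter $P_{s_k}(E_k,\Omega\cap B_\delta(x_k))$ from above by $P_{s_k}(F_k,\Omega\cap B_{\delta+\eps}(x_0))$, applying Proposition~\ref{barmubaral} to the \emph{fixed} domain $\Omega\cap B_{\delta+\eps}(x_0)$. This produces the balance $(\omega_n-\overline\alpha(E_0))\,|D|\lesssim \overline\alpha(E_0)\,|A|$, which is exactly the constant in \eqref{1666}. The compactness step is what spares the paper any uniform-in-$x$ estimate.

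Your direct approach instead uses the difference identity $P_s(F,\Omega)-P_s(E,\Omega)=\Ll_s(D,E\setminus D)-\Ll_s(D,\Co E)$, where \emph{both} interactions are integrals over $D$. After the far/near split this yields $|D|\big(\omega_n-2\overline\alpha(E_0)-o(1)\big)\le (\text{near part})$, so once the near part is $o(1)$ uniformly you actually obtain $|D|\to0$ uniformly in $x$ --- strictly stronger than \eqref{1666}. To then recover \eqref{1666} you implicitly use that $\inf_{x\in\overline\Omega}|\Omega\cap B_\delta(x)|>0$, which holds because $x\mapsto|\Omega\cap B_\delta(x)|$ is continuous on the compact set $\overline\Omega$ and strictly positive there (by the measure-theoretic convention on $\partial\Omega$); you should make this step explicit. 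Regarding the uniform near-field control: the ``Hardy-type'' bound on $\int_A d(u,\partial A)^{-s}\,du$ that you invoke is delicate for a merely finite-perimeter $\Omega$ (it needs something like $\int_\Omega d(u,\partial\Omega)^{-1/2}\,du<\infty$, which does not follow from $P(\Omega)<\infty$ alone). A cleaner route is to observe that for $s<\tfrac12$ one has $s\cdot(\text{near part})\le s\,P_{1/2}(A_x)+|A_x|\,\omega_n(1-R^{-s})$, and then use submodularity $P_{1/2}(\Omega\cap B_\delta(x))\le P_{1/2}(\Omega)+P_{1/2}(B_\delta)$ together with $P_{1/2}(\Omega)<\infty$ (which \emph{does} follow from finite classical perimeter) to get the uniform $o(1)$.
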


\begin{remark}
 Let $\Omega$ and $ E_0$ be as in Theorem \ref{notfull} and fix $\gamma=\frac{1}{2}$.
 \begin{enumerate}
\item Notice that we can find $\bar \delta >0$ and  $\bar x \in \Omega$ such that
\[ B_{2\bar\delta} (\bar x ) \subset \Omega.\]
Now if $s<\sigma_{\bar \delta,\frac{1}{2}}$ and $E$ is $s$-minimal in $\Omega$ with respect to $E_0$, \eqref{1666} says that
\[ |B_{\bar\delta} (\bar x ) \cap \Co E|>0.\] 
Then (since the ball is connected), either $B_{\bar\delta} (\bar x ) \subset \Co E$ or there exists a point
\[x_0\in\partial E\cap \overline B_{\bar\delta} (\bar x ).\]
In this case, since $d(x_0, \partial \Omega )\ge\bar \delta$, Corollary 4.3 of \cite{nms} implies that
\[B_{\bar\delta c_s}(z)\subset\Co E\cap B_{\bar \delta}(x_0)\subset\Co E\cap\Omega\] for some $z$, where $c_s\in(0,1]$ denotes the constant of the clean ball condition (as introduced in Corollary 4.3 in \cite{nms}) and depends only on $s$ (and $n$). In both cases, there exists a ball of radius $\bar \delta c_s$ contained in $\Co E \cap \Omega$. 
\item If $s<\sigma_{\bar \delta,\frac{1}{2}}$ and $E$ is $s$-minimal and $\delta_s$-dense, then 
we have that
\[\delta_s>c_s\bar \delta.\]
On the other hand, we have an explicit expression for $\delta_s$, given in \eqref{delta_wild_index_def}. Therefore, if one could prove that $c_s$ goes to zero slower than $\delta_s$, one could exclude the existence of $s$-minimal sets that are $\delta_s$-dense (for all sufficiently small $s$). 
\end{enumerate}
\end{remark}
\smallskip 

     
     An interesting result is related to $s$-minimal sets whose exterior data does not completely surround $\Omega$. In this case, the $s$-minimal set, for small values of $s$, is always
empty in $\Omega$. More precisely:

 \begin{theorem}\label{boundedset}[Proof in Section \ref{sticky}]
  Let $\Omega$ be a  bounded and  connected open set with $C^2$ boundary. Let $E_0\subset \Co \Omega$ such that  \[\overline \alpha(E_0)<\frac{\omega_n}{2},\]
 and let $s_1$ be as in Theorem \ref{THM}. Suppose that there exists $R>0$ and $x_0\in \partial \Omega$ such that \[B_R(x_0)\setminus \Omega \subset \Co E_0.\]  Then, there exists $s_3=s_3(E_0,\Omega)\in(0,s_1]$ such that if $s<s_3$ and $E$ is an $s$-minimal set in $\Omega$ with exterior data $E_0$, then 
    \[  E\cap \Omega=\emptyset .\]
     \end{theorem}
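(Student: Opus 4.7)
\textbf{Proof plan for Theorem~\ref{boundedset}.}

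The plan is to argue by contradiction, combining the dichotomy from Theorem~\ref{THM}~A) with the uniform positivity of the fractional mean curvature from Theorem~\ref{positivecurvature} by means of a sliding-ball construction that exploits the ``hole'' $B_R(x_0)\setminus\Omega\subset\Co E_0$. Suppose there is a sequence $s_k\downarrow 0$ with $s_k<s_1$ and $s_k$-minimal sets $E_k$ in $\Omega$ with exterior datum $E_0$ such that $E_k\cap\Omega\neq\emptyset$. Then Theorem~\ref{THM}~A) forces each $E_k$ to be $\delta_{s_k}$-dense in $\Omega$, and the goal is to derive a contradiction by locating a boundary point of $E_k$ at which the fractional mean curvature is simultaneously strictly positive and non-positive.

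The first step is to produce, for each small $s_k$, a point $p_k\in\partial E_k\cap\overline{\Omega}$ close to $x_0$ admitting a fixed-size exterior tangent ball. Fixing $r_\ast\in(0,\min\{r_0,R/4\}]$ and letting $\nu$ denote the interior unit normal to $\partial\Omega$ at $x_0$, I would consider the family
\[B_t:=B_{r_\ast}(x_0+(t-r_\ast)\nu),\quad t\ge 0,\]
so that $B_0$ is the exterior tangent ball to $\Omega$ at $x_0$, hence $B_0\subset B_R(x_0)\setminus\Omega\subset\Co E_k$. Setting
\[t_k:=\sup\{t\ge 0\,:\,B_t\cap E_k=\emptyset\},\]
one checks using the $C^2$ interior-tangent-ball structure that for $t>r_\ast+\delta_{s_k}$ the center of $B_t$ lies in $\Omega$ at distance $>\delta_{s_k}$ from $\partial\Omega$; the inclusion $B_{\delta_{s_k}}(\mathrm{center})\subset B_t$ and the $\delta_{s_k}$-density of $E_k$ then force $B_t\cap E_k\neq\emptyset$. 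Hence $t_k\le r_\ast+\delta_{s_k}$, and standard continuity arguments produce $p_k\in\partial E_k\cap\partial B_{t_k}$. Since $|p_k-x_0|\le t_k<R$ and $E_k\cap(B_R(x_0)\setminus\Omega)=\emptyset$, necessarily $p_k\in\overline\Omega$, and $B_{t_k}\subset\Co E_k$ is an exterior tangent ball to $E_k$ at $p_k$ of radius $r_\ast$.

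The second step is to contradict the Euler-Lagrange relations at $p_k$. Once $s_k$ is small enough that $\delta_{s_k}\le r_\ast$ and $s_k<s_0$, Theorem~\ref{positivecurvature} applied with $\sigma=s_k$ yields
\[\liminf_{\rho\to 0^+}\I_{s_k}^\rho[E_k](p_k)\ge\frac{\beta}{s_k}>0.\]
If $p_k\in\Omega$, the interior Euler-Lagrange equation (Appendix~\ref{brr2}) gives $\I_{s_k}[E_k](p_k)=0$, a contradiction. If instead $p_k\in\partial\Omega$, the exterior tangent ball to $\Omega$ at $p_k$ of radius $r_\ast$ also lies in $B_R(x_0)\setminus\Omega\subset\Co E_k$ and is an exterior tangent ball to $E_k$ at $p_k$ whose outward normal is $-\nu_{p_k}^\Omega$; comparing exterior tangent balls forces $\nu_{p_k}^{E_k}=\nu_{p_k}^\Omega$, so $\partial E_k$ and $\partial\Omega$ touch \emph{non-transversally} at $p_k$. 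The one-sided Euler-Lagrange inequality at $\partial E_k\cap\partial\Omega$ from Appendix~\ref{appendicite3} then yields $\I_{s_k}[E_k](p_k)\le 0$, again a contradiction.

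The main obstacle is precisely this boundary sub-case: one has to verify that the Euler-Lagrange inequality from Appendix~\ref{appendicite3} applies, for which non-transversality of $\partial E_k$ and $\partial\Omega$ at $p_k$ must be established. The resolution is the observation above, namely that the mere existence of an exterior tangent ball to $E_k$ inherited from the exterior tangent ball to $\Omega$ automatically aligns the two outward normals, hence yields non-transversality. Bookkeeping the constraints on $s_k$ (namely $s_k<s_1$, $s_k<s_0$, and $\delta_{s_k}\le r_\ast$) then produces the desired threshold $s_3\in(0,s_1]$ below which $E\cap\Omega=\emptyset$.
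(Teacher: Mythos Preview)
Your proposal is correct and follows essentially the same sliding-ball argument as the paper: start from the exterior tangent ball sitting in the hole $B_R(x_0)\setminus\Omega\subset\Co E$, slide it along the inner normal until it first touches $\partial E$ at some $q\in\overline\Omega\cap B_R(x_0)$, and contradict Theorem~\ref{positivecurvature} via the Euler-Lagrange (in)equality at $q$. Your perceived ``main obstacle'' in the boundary case is not actually one: the hypothesis of Theorem~\ref{EL_boundary_coroll} is simply $B_{R_0}(p_k)\setminus\Omega\subset\Co E_k$ for some $R_0>0$, which follows immediately from $p_k\in B_R(x_0)$ and the assumption $B_R(x_0)\setminus\Omega\subset\Co E_0$, so no separate verification of non-transversality or alignment of normals is needed.
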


We notice that Theorem \ref{boundedset} prevents the existence 
of $s$-minimal sets that are $\delta$-dense (for any $\delta$).    

\begin{remark}
The indexes $s_1$ and $s_3$ are defined as follows
\[s_1:=\sup\{s\in(0,s_0)\,|\,\delta_s<r_0\}\]
and
\[s_3:=\sup\Big\{s\in(0,s_0)\,\big|\,\delta_s<\frac{1}{2}\min\{r_0,R\}\Big\}.\]
Clearly, $s_3\leq s_1\leq s_0$.
\end{remark}

\begin{remark} We point out that condition \eqref{weak_hp_beta}
is somehow optimal. Indeed,
when $\alpha(E_0)$ exists and 
\[ \alpha(E_0)=\frac{\omega_n}2,\]
several configurations may occur, depending on the position of $\Omega$ with respect to the exterior data $E_0\setminus \Omega$. As an example, take 
\[ \mathfrak P =\{ (x',x_n) \; \big| \; x_n> 0\}.\] Then, for any $\Omega\subset \Rn$  a bounded open set with $C^2$ boundary, the only $s$-minimal set with exterior data given by $\mathfrak P \setminus \Omega$ is $\mathfrak P$ itself. So, if $E$ is $s$-minimal with respect to $E_0\setminus \Omega$ then
\bgs{&\Omega\subset \mathfrak P & \quad \implies \quad &E\cap \Omega=\Omega\\
 & \Omega\subset \Rn \setminus \mathfrak P &\quad \implies \quad &E\cap \Omega=\emptyset.} 
 On the other hand, if one takes $\Omega= B_1$, then 
 \[  E\cap B_1 = \mathfrak P  \cap B_1.\] 
 
 As a further example, we consider the supergraph
 \[ E_0:=\{(x',x_n) \; \big| \; x_n > \tanh x_1\},\] for which we have that (see Example \ref{tanh})
 \[\alpha(E_0)=\frac{\omega_n}2.\]  Then for every $s$-minimal set in $\Omega$ with exterior data $E_0\setminus \Omega$, we have that
 \bgs{&\Omega\subset \{ (x',x_n) \; \big| \; x_n> 1\} & \quad \implies \quad &E\cap \Omega=\Omega\\
 & \Omega\subset \{ (x',x_n) \; \big| \; x_n<-1\} &\quad \implies \quad &E\cap \Omega=\emptyset.} 
Taking $\Omega=B_2$, we have by the maximum principle in Proposition \ref{maximum_principle}  that every set $E$ which is $s$-minimal in $B_2$, with respect to $E_0\setminus B_2$, satisfies
\bgs{  B_2\cap  \{ (x',x_n) \; \big| \; x_n> 1\}\subset E,   \qquad 
  B_2 \cap \{ (x',x_n) \; \big| \; x_n<-1\} \subset \Co E .} 
 On the other hand, we are not able to establish what happens in $B_2\cap \{ (x',x_n) \; \big| \; -1<x_n< 1\} $.
\end{remark}

\begin{remark}
We notice that when $E$ is $s$-minimal in $\Omega$ with respect to $E_0$, then $\Co E$ is $s$-minimal in $\Omega$ with respect to $\Co E_0$. Moreover
\[ \underline \alpha(E_0) >\frac{\omega_n}{2} \qquad \implies \qquad \overline \alpha (\Co E_0)< \frac{\omega_n}{2}.\]
So in this case we can apply Theorems \ref{positivecurvature}, \ref{THM}, \ref{notfull} and \ref{boundedset} to $\Co E$ with respect to the exterior data $\Co E_0$. For instance, if
$E$ is $s$-minimal in $\Omega$ with exterior data $E_0$ with
\[ \underline \alpha(E_0) >\frac{\omega_n}{2}, \]
and $s<s_1(\Co E_0, \Omega)$,
 then either
\[ E\cap \Omega=\Omega \qquad \mbox{ or }  \qquad  \Co E \; \mbox{ is } \; \delta_s(\Co E_0)-\mbox{dense}.\]
 The analogues of the just mentioned Theorems can be obtained similarly.
\end{remark}

We point out that from our main results and the last two remarks, we have a complete classification of nonlocal minimal surfaces when $s$ is small whenever
\[ \alpha(E_0)\neq  \frac{\omega_n}{2} .\] 

In the last section of the paper, we prove the continuity of the fractional mean curvature in all variables (see Theorem \ref{everything_converges} and Proposition \ref{propsto0}). As a consequence, we have the following result. 

\begin{prop}\label{rsdfyish}
Let $E\subset\R^n$ and let $p\in\partial E$ such that $\partial E$ is $C^{1,\alpha}$ in $B_R(p)$ for some
$R>0$ and $\alpha\in(0,1]$. Then the function
\[\I_{(-)}[E](-):(0,\alpha)\times(\partial E\cap B_R(p))\longrightarrow\R,
\qquad(s,x)\longmapsto\I_s[E](x)\]
is continuous.\\
Moreover, if $\partial E\cap B_R(p)$ is $C^2$ and for every $x\in \partial E\cap B_R(p)$ we define
\sys[ \tilde \I_s  {[}E{]} (x):=]{ &s(1-s)\I_s[E](x),  & \mbox{ for } &s\in (0,1) \\	
						&{\omega_{n-1}} H[E](x), &\mbox{ for } &s=1,}
then the function
\[\tilde \I_{(-)}[E](-):(0,1]\times(\partial E\cap B_R(p))\longrightarrow\R,
\qquad(s,x)\longmapsto \tilde \I_s[E](x)\]
is continuous.\\
Finally, if $\partial E\cap B_R(p)$ is $C^{1,\alpha}$ and $\alpha(E)$ exists, and if for every $x\in \partial E\cap B_R(p)$ we denote
\[\tilde\I_0[E](x):=\omega_n-2\alpha(E),\]
then the function
\[\tilde \I_{(-)}[E](-):[0,\alpha)\times(\partial E\cap B_R(p))\longrightarrow\R,
\qquad(s,x)\longmapsto \tilde \I_s[E](x)\]
is continuous.
\end{prop}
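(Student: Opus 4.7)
The plan is to derive all three continuity statements as consequences of the two main technical results referenced in the introduction, namely the joint continuity Theorem~\ref{everything_converges} (which handles $\I_s[E](x)$ as the set $E$, the base point $x$, and the parameter $s$ vary together on the regular part of the boundary) together with Proposition~\ref{propsto0} (which pins down the behavior as $s\to 0^+$), supplemented by the classical limit $(1-s)\I_s[E](x)\to\omega_{n-1}H[E](x)$ as $s\to 1^-$ recalled from \cite{Abaty,regularity}.

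For the first statement, I would simply specialize Theorem~\ref{everything_converges} to the constant family $E_k\equiv E$: on any compact $K\Subset\partial E\cap B_R(p)$ the $C^{1,\alpha}$ regularity is uniform, and the theorem directly yields joint continuity of $(s,x)\mapsto\I_s[E](x)$ on $(0,\alpha)\times K$, hence on the whole product.

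For the second statement, the interior continuity on $(0,1)\times(\partial E\cap B_R(p))$ is the same fact. It therefore remains to prove joint continuity at $s=1$: given a sequence $(s_k,x_k)\to(1,x_0)$, I would split the integrand defining $\I_{s_k}[E](x_k)$ over $B_\rho(x_k)$ and its complement for a small uniform $\rho>0$. The $C^2$ assumption on $\partial E\cap B_R(p)$ provides a Taylor-type cancellation in the inner piece that is uniform in $x_k$ in a neighborhood of $x_0$, and the outer piece is bounded uniformly in $(s_k,x_k)$ and is then killed by the factor $(1-s_k)$. This yields $(1-s_k)\I_{s_k}[E](x_k)\to\omega_{n-1}H[E](x_0)$, and the continuity of the classical mean curvature on the $C^2$ portion of $\partial E$ closes the argument. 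For the third statement, the interior continuity is again the first part, while at $s=0$ one applies Proposition~\ref{propsto0}: under the standing $C^{1,\alpha}$ assumption and the existence of $\alpha(E)$, the proposition provides $s\,\I_s[E](x)\to\omega_n-2\alpha(E)$ with uniform convergence in $x$ on compact subsets of $\partial E\cap B_R(p)$. Since $(1-s)\to 1$, this gives $\tilde\I_s[E](x)\to\tilde\I_0[E](x)$ uniformly on compacts, and thus the claimed joint continuity on $[0,\alpha)\times(\partial E\cap B_R(p))$.

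The main obstacle in both boundary limits is ensuring that the convergence is uniform in $x$ on compact subsets of the regular part of $\partial E$, rather than merely pointwise. For the limit at $s=1$ this amounts to having uniform $C^2$ estimates for the local graph representations of $\partial E$, which is automatic on compact subsets of the $C^2$ part of the boundary. For the limit at $s=0$ one must moreover handle the tail integral $\int_{\Co B_1}\chi_E(y)|y-x|^{-n-s}\,dy$ uniformly in the base point $x$, showing that its difference with the analogous quantity centered at the origin vanishes after multiplication by $s$; this is the content, and presumably the heart, of Proposition~\ref{propsto0}, after which the present statement follows by a straightforward triangle-inequality argument combining the three regimes.
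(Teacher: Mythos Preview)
Your proposal is correct and follows essentially the paper's approach: Proposition~\ref{rsdfyish} is obtained there in one line by fixing $E_k\equiv E$ in Theorem~\ref{everything_converges} and Proposition~\ref{propsto0}. The only minor redundancy is that for the limit $s\to1$ you sketch a direct near--/far--field decomposition, whereas the second part of Theorem~\ref{everything_converges} (applied with $E_k=E$ and $q_k\to q$) already yields $(1-s_k)\I_{s_k}[E](q_k)\to\omega_{n-1}H[E](q)$, so that extra argument is not needed.
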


As a consequence of the continuity of the fractional mean curvature and the asymptotic result in  Theorem \ref{asympts} we
establish that, by varying the fractional parameter $s$,
the nonlocal mean curvature may change sign at a point
where the classical mean curvature is negative, as one can observe in Theorem \ref{changeyoursign}.


\subsection{Definitions, known facts and notations}\label{defnsknwown}
We recall here some basic facts on $s$-minimal sets and surfaces, on the fractional mean curvature operator, and some notations, that we will use in the course of this paper.

\subsubsection{Measure theoretic assumption}\label{MEAS:ASS:SEC}
The following notations and measure theoretic assumptions are assumed throughout the paper.

Let $E\subset\R^n$. Up to modifying $E$ on a set of measure zero we can assume (see e.g. Appendix C of \cite{Myfractal})
that $E$ contains the measure theoretic interior
\begin{equation*}
E_{int}:=\Big\{x\in\R^n\,|\,\exists\,r>0\textrm{ s.t. }|E\cap B_r(x)|=\frac{\omega_n}n r^n\Big\}\subset E,
\end{equation*}
the complementary $\Co E$ contains its measure theoretic interior
\begin{equation*}
E_{ext}:=\{x\in\R^n\,|\,\exists\,r>0\textrm{ s.t. }|E\cap B_r(x)|=0\}\subset\Co E,
\end{equation*}
and the topological boundary of $E$ coincides with its measure theoretic boundary, $\partial E=\partial^-E$,
where
\begin{equation*}\begin{split}
\partial^-E&:=\R^n\setminus(E_{int}\cup E_{ext})\\
&
=\Big\{x\in\R^n\,|\,0<|E\cap B_r(x)|<\frac{\omega_n}{n}r^n\textrm{ for every }r>0\Big\}.
\end{split}
\end{equation*}
In particular, we remark that both $E_{int}$ and $E_{ext}$ are open sets.


\subsubsection{H\"{o}lder continuous functions}
We will use the following notation for the class of H\"{o}lder continuous functions.

Let $\alpha\in (0,1]$, let $S\subset\R^n$ and let $v:S\longrightarrow\R^m$. The $\alpha$-H\"{o}lder semi-norm of $v$ in $S$
is defined as
\[[v]_{C^{0,\alpha}(S,\R^m)}:=\sup_{x\neq y\in S}\frac{|v(x)-v(y)|}{|x-y|^\alpha}.\]
With a slight abuse of notation, we will omit the $\R^m$ in the formulas.
We also define
\[\|v\|_{C^0(S)}:=\sup_{x\in S}|v(x)|\quad\textrm{and}\quad\|v\|_{C^{0,\alpha}(S)}
:=\|v\|_{C^0(S)}+[v]_{C^{0,\alpha}(S)}.\]

Given an open set $\Omega\subset\R^n$, we define the space of uniformly H\"{o}lder continuous functions
$C^{0,\alpha}(\overline \Omega,\R^m)$ as
\[C^{0,\alpha}(\overline \Omega,\R^m):=\{v\in C^0(\overline{\Omega},\R^m)\,|\,
\|v\|_{C^{0,\alpha}(\overline{\Omega})}<\infty\}.\] 

Recall that $C^1(\overline{\Omega})$ is the space of those functions $u:\overline{\Omega}\longrightarrow\R$ such that
$u\in C^0(\overline{\Omega})\cap C^1(\Omega)$ and
such that $\nabla u$ can be continuously extended to $\overline{\Omega}$.
For every $S\subset\overline{\Omega}$ we write
\[\|u\|_{C^{1,\alpha}(S)}:=\|u\|_{C^0(S)}+\|\nabla u\|_{C^{0,\alpha}(S)},\]
and we define
\[C^{1,\alpha}(\overline \Omega):=\{u\in C^1(\overline{\Omega})\,|\,
\|u\|_{C^{1,\alpha}(\overline{\Omega})}<\infty\}.\]

We will usually consider the local versions of the above spaces. Given an open set $\Omega\subset\R^n$,
the space of locally H\"{o}lder continuous functions $C^{k,\alpha}(\Omega)$, with $k\in\{0,1\}$, is defined as
\[C^{k,\alpha}(\Omega):=\{u\in C^k(\Omega)\,|\,\|u\|_{C^{k,\alpha}(\mathcal O)}<\infty \mbox{ for every } \mathcal O\subset \subset \Omega\}.\] 

\subsubsection{The Euler-Lagrange equation}
 
We recall that the fractional mean curvature gives the Euler-Lagrange equation of an $s$-minimal set.
To be more precise, if $E$ is $s$-minimal in $\Omega$, then
\[\I_s[E]=0,\qquad\textrm{on}\quad\partial E\cap\Omega,\]
in an appropriate viscosity sense (see Theorem 5.1 of\cite{nms}).

Actually, by exploiting the interior regularity theory of $s$-minimal sets, the equation is satisfied in the classical sense
in a neighborhood of every ``viscosity point'' (see Appendix \ref{brr2}). 
That is, if $E$ has at $p\in\partial E\cap\Omega$ a tangent ball (either interior or exterior), then $\partial E$ is $C^\infty$ in $B_r(p)$, for some $r>0$ small enough, and
\[\I_s[E](x)=0,\qquad\forall\,x\in\partial E\cap B_r(p).\] 
Moreover, if $\Omega$ has a $C^2$ boundary, then the Euler-Lagrange equation (at least as an inequality) holds also at a point $p\in\partial E\cap\partial\Omega$,
provided that the boundary $\partial E$ and the boundary $\partial\Omega$ do not intersect ``transversally'' in $p$ (see Theorem \ref{EL_boundary_coroll}).


\section{Contribution to the mean curvature coming from infinity}\label{contr_infty}
In this section, we study in detail the quantities $\alpha(E)$, $\overline\alpha(E),\underline \alpha(E)$) as defined in \eqref{alpha}, \eqref{baralpha1}. As a first remark, notice that these definitions  are independent on the radius of the ball (see Observation 3 in \cite{asympt1}, Subsection 3.3) so we have that for any $R>0$
\eqlab{ \label{baralpha}  \overline \alpha (E)= \limsup_{s\to 0^+} s\int_{\Co B_R} \frac{\chi_E(y)}{|y|^{n+s}}\, dy, \quad \underline \alpha(E) := \liminf_{s\to 0^+} s\int_{\Co B_R} \frac{\chi_E(y)}{|y|^{n+s}}\, dy .}
Notice that
\[ \overline \alpha(E) = \omega_n -\underline \alpha(\Co E),  \quad \underline \alpha(E) = \omega_n - \overline \alpha(\Co E).\] 
We define
\[ \alpha_s(q,r,E):=\int_{\Co B_r(q)} \frac{\chi_{E}(y) }{|q-y|^{n+s}} \, dy .\]
Then, the quantity $\alpha_s(q,r,E)$
somehow ``stabilizes'' for small $s$ independently on how large or where we take the ball, as rigorously given by the following result:

\begin{prop}\label{unifrq}
Let $K\subset \Rn$ be a compact set and $[a,b]\subset \R$ be a closed interval, with $0<a<b$. Then 
\bgs{\label{name1} \lim_{s\to 0^+}s|\alpha_s(q,r,E)-\alpha_s(0,1,E)| =0\quad \mbox{ uniformly in } q \in K, r\in [a,b].
}
Moreover,
for any bounded open set $\Omega\subset \Rn$ and any fixed $r>0$,
we have that \eqlab{\label{claimalpha}\limsup_{s\to 0^+} s\inf_{q\in \overline \Omega} \alpha_s(q,r,E)= \limsup_{s\to 0^+} s\sup_{q\in \overline \Omega} \alpha_s(q,r,E)=\overline \alpha(E).}

\end{prop}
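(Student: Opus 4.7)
The plan is to prove the uniform convergence statement first, then deduce \eqref{claimalpha} by a $\sup$/$\inf$ argument.

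For the uniform convergence, I fix $R>0$ so large that $K\subset B_{R/4}$ and $b<R/4$; in particular $B_r(q)\subset B_R$ for every $q\in K$ and $r\in[a,b]$. Splitting the two integrals according to the large ball $B_R$ and its complement $\Co B_R$, I write
\[\alpha_s(q,r,E)-\alpha_s(0,1,E)=I_1-I_2+I_3,\]
where
\[I_1:=\int_{B_R\setminus B_r(q)}\frac{\chi_E(y)}{|q-y|^{n+s}}\,dy,\qquad I_2:=\int_{B_R\setminus B_1}\frac{\chi_E(y)}{|y|^{n+s}}\,dy,\]
and the tail difference is
\[I_3:=\int_{\Co B_R}\chi_E(y)\left(\frac{1}{|q-y|^{n+s}}-\frac{1}{|y|^{n+s}}\right)dy.\]
It then suffices to show $s|I_j|\to 0$ uniformly in $q\in K$, $r\in[a,b]$, for each $j=1,2,3$.

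For $I_1$, passing to polar coordinates around $q$ and using the inclusion $B_R\setminus B_r(q)\subset B_{3R/2}(q)\setminus B_r(q)$ gives $|I_1|\le \omega_n s^{-1}(r^{-s}-(3R/2)^{-s})$, and expanding $r^{-s}=1-s\log r+O(s^2)$ uniformly in $r\in[a,b]$ yields $s|I_1|\to 0$ uniformly; the bound for $I_2$ is identical with $r=1$. For $I_3$, the point is that when $y\in\Co B_R$ and $q\in K$ we have $|y|\ge 4|q|$, so both $|q-y|$ and $|y|$ lie in $[|y|/2,3|y|/2]$; applying the mean value theorem to $t\mapsto t^{-(n+s)}$ on this interval produces
\[\left|\,|q-y|^{-(n+s)}-|y|^{-(n+s)}\right|\le (n+s)\,2^{n+s+1}\,|q|\,|y|^{-(n+s+1)}.\]
Integrating in polar coordinates over $\Co B_R$, one obtains $|I_3|\le C(n)\sup_K|q|/((s+1)R^{s+1})$, so $s|I_3|\to 0$ uniformly in $q\in K$.

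For \eqref{claimalpha}, I apply the first statement with $K=\overline{\Omega}$ and the singleton $\{r\}$ to get $s\alpha_s(q,r,E)=s\alpha_s(0,1,E)+\eta_s(q)$ with $\sup_{q\in\overline\Omega}|\eta_s(q)|\to 0$. Taking $\sup_q$ (resp.\ $\inf_q$) and then $\limsup_{s\to 0^+}$ produces the common value $\limsup_{s\to 0^+}s\alpha_s(0,1,E)=\overline\alpha(E)$, the last equality by \eqref{baralpha} with $R=1$. The main subtlety lies in the tail bound for $I_3$: since $|y|^{-(n+s)}$ is only marginally integrable at infinity (producing the usual $1/s$ pole), one cannot simply bound $|I_3|$ by the sum of the two tails. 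The mean value step gains an extra factor $|y|^{-1}$, making the kernel difference genuinely integrable with bound of order $O(1)$ in $s$, so that multiplication by $s$ finally yields the required decay.
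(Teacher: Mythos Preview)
Your proof is correct and takes a genuinely different route from the paper's. The paper introduces an auxiliary parameter $\eps\in(0,1)$ and works with the dilated ball $B_{R/\eps}$: for $y\in\Co B_{R/\eps}$ one has $(1-\eps)|y|\le|q-y|\le(1+\eps)|y|$, which sandwiches $\alpha_s(q,r,E)$ between $(1\pm\eps)^{-n-s}\alpha_s(0,R/\eps,E)$ plus annulus errors; one then takes $\limsup_{s\to0^+}$ and afterwards sends $\eps\to0$. Your approach avoids this double limit entirely: you fix a single large $R$, isolate the tail difference $I_3$, and use the mean value theorem on $t\mapsto t^{-(n+s)}$ to gain the extra factor $|y|^{-1}$, which makes the tail genuinely $O(1)$ in $s$ rather than $O(1/s)$. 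This is more direct and makes the uniformity in $(q,r)$ transparent from the explicit bounds. The paper's $\eps$-trick, on the other hand, is a robust template that reappears elsewhere in the article (e.g.\ in the proof of Proposition~\ref{barmubaral}), so it has the advantage of methodological consistency. Your deduction of \eqref{claimalpha} from the first part is also cleaner than the paper's, which re-derives the sandwich inequalities rather than simply invoking the uniform convergence already established.
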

\begin{proof}
Let us fix $r\in [a,b]$ and $q\in K$, and $R>0$ such that $K\subset \overline B_{ R}$. Let also $\eps\in (0,1)$ be a fixed positive small quantity (that we will take arbitrarily small further on), such that 
\[ R>(\eps b)/(1-\eps).\]
We notice that if $x\in B_r(q)$, we have that 
$|x|<r+|q|<{R}/{\eps},$
hence $B_r(q)\subset B_{R/\eps}$.
We write that
\[\alpha_s(q,R,E)= \int_{\Co B_r(q)}\frac{\chi_E(y)}{|q-y|^{n+s}}\, dy = \int_{\Co B_{ R/\eps}} \frac{\chi_E(y)}{|q-y|^{n+s}}\, dy + \int_{ B_{ R/\eps}\setminus B_r(q)} \frac{\chi_E(y)}{|q-y|^{n+s}}\, dy.\]
Now for $y\in \Co B_{R/\eps}$ we have that $|y-q|\geq |y|-|q| \geq (1-\eps)|y|$, thus for any $q\in \overline B_R$ 
\eqlab{\label{secondaaa1} \int_{ \Co B_{ R/\eps}} \frac{\chi_E(y)}{|q-y|^{n+s}} \, dy \leq &\; (1-\eps)^{-n-s} \int_{\Co B_{ R/\eps}} \frac{\chi_E(y)}{|y|^{n+s}}\, dy =(1-\eps)^{-n-s} \alpha_s(0,R/\eps,E)
. }
Moreover 
\eqlab{\label{secondaaa11} \int_{ B_{ R/\eps}\setminus B_r(q)} \frac{\chi_E(y)}{|q-y|^{n+s}}\, dy \leq &\; \int_{B_{ R/\eps}\setminus B_r(q)} \frac{dy}{|q-y|^{n+s}} \leq \omega_n \int_r^{R/\eps+ R} t^{-s-1}\, dt\\ = &\; \omega_n \frac{r^{-s}- R^{-s}\eps^s(1+\eps)^{-s} }{s} \leq \omega_n \frac{a^{-s}- R^{-s}\eps^s(1+\eps)^{-s} }{s} . }
Notice also that
since $B_r(q)\subset B_{R/\eps}$ and $|q-y|\leq |q|+|y|\leq (\eps+1)|y|$ for any $y\in \Co B_{R/\eps}$,
we obtain that
\eqlab{\label{primariga}\int_{\Co B_r(q)}\frac{\chi_E(y) }{|q-y|^{n+s}}\, dy \geq &\; \int_{\Co B_{ R/\eps}} \frac{\chi_E(y) }{|q-y|^{n+s}}\, dy  \geq (1+\eps)^{-n-s} \int_{\Co B_{ R/\eps}} \frac{\chi_E(y)}{|y|^{n+s}}\, dy.
}
Putting\eqref{secondaaa1}, \eqref{secondaaa11} and \eqref{primariga} together, we get that
\bgs{  0\leq \alpha_s(q,r,E) -(1+\eps)^{-n-s}  \alpha_s(0,R/\eps,E)
 \leq &\;\alpha_s(0, R/\eps, E) \left((1-\eps)^{-n-s}-(1+\eps)^{-n-s}\right)  \\
 &\;+ \omega_n \frac{a^{-s}- R^{-s}\eps^s(1+\eps)^{-s} }{s}.}
Now we have that
\bgs{| \alpha_s(0, R/\eps,E)
-\alpha_s(0,1,E)| \leq \left|\int_{B_{R/\eps}\setminus B_1} \frac{dy}{|y|^{n+s}} \right| \leq \omega_n \frac{ |1-R^{-s}\eps^{s}|}{s}. 
}
So by the triangle inequality we obtain 
\bgs{ |\alpha_s(q,r,E)-&(1+\eps)^{-n-s}\alpha_s(0,1,E)|
\leq \alpha_s(0, R/\eps, E)  \left((1-\eps)^{-n-s}-(1+\eps)^{-n-s}\right)
 \\
  &\;+ \frac{\omega_n}s \big[ a^{-s}- R^{-s}\eps^s(1+\eps)^{-s} +  (1+\eps)^{-n-s}   { |1-R^{-s}\eps^{s}|}\big] . }
Hence,
it holds that
\[ \limsup_{s\to 0^+}s |\alpha_s(q,r,E)-(1+\eps)^{-n}\alpha_s(0,1,E)| \leq  \left((1-\eps)^{-n}-(1+\eps)^{-n}\right) \overline \alpha(E) ,\]
uniformly in $q\in K$ and in $r\in [a,b]$.\\
Letting $\eps \to 0^+$, 
we conclude that
 \[\lim_{s\to 0^+}s|\alpha_s(q,r,E)-\alpha_s(0,1,E)| =0,\]
uniformly in $q\in K$ and in $r\in [a,b]$.

Now, we consider $K$ such that $K=\overline \Omega$.
 Using the inequalities \eqref{secondaaa1}, \eqref{secondaaa11} and \eqref{primariga} we have that for any $q\in \overline \Omega$
\bgs{  (1+\eps)^{-n-s} \int_{\Co B_{ R/\eps}} \frac{\chi_E(y)}{|y|^{n+s}} \,dy \leq &\int_{\Co B_r(q)}\frac{\chi_E(y)}{|q-y|^{n+s}}\, dy\\ \leq&\;  
   (1-\eps)^{-n-s} \int_{\Co B_{R/\eps}} \frac{\chi_E(y)}{|y|^{n+s}}\, dy + \omega_n \frac{a^{-s}-R^{-s}\eps^s(1+\eps)^{-s}}s. }
Passing to limsup 
it follows that
\bgs{ &(1+\eps)^{-n} \overline \alpha(E) \leq \limsup_{s\to 0^+} s\inf_{q\in \overline \Omega} \int_{\Co B_r(q)}\frac{\chi_E(y)}{|q-y|^{n+s}} \, dy\\& \leq \limsup_{s\to 0^+} s\sup_{q\in \overline \Omega}\int_{\Co B_r(q)}\frac{\chi_E(y)}{|q-y|^{n+s}} \, dy  \leq  (1-\eps)^{-n} \overline\alpha(E).}
Sending $\eps \to 0$  we obtain the conclusion.
\end{proof}

\begin{remark}\label{finmeas}
Let $E\subset \Rn$ be such that $|E|<\infty$. Then
\[ \alpha(E)=0.\]
Indeed,
\[ |\alpha_s(0,1,E)|\leq |E|,\]
hence
\[ \limsup_{s\to 0} s|\alpha_s(0,1,E)|=0.\]
 \end{remark}
 
Now, we discuss some useful properties of $\overline\alpha$. 
Roughly speaking, the quantity $\overline\alpha$ takes into account
the ``largest possible asymptotic opening'' of a set, and so it
possesses nice geometric features such as monotonicity, additivity
and geometric invariances. The detailed list of these properties is
the following:

\begin{prop}\label{subsetssmin} \quad \\ 
(i) (Monotonicity) Let $E,F\subset \Rn$ be such that for some $r>0$ and $q\in \Rn$\[ E\setminus B_r(q)\subset  F\setminus B_r(q).\] Then
\[\overline \alpha(E)\leq \overline \alpha(F).\]
(ii) (Additivity) Let $E,F\subset \Rn$ be such that for some $r>0$ and $q\in \Rn$ \[ (E\cap F)\setminus B_r(q)= \emptyset.\] Then
\[ \overline \alpha (E\cup F)\leq \overline \alpha(E)+\overline \alpha(F).\]
Moreover, if $\alpha(E), \alpha(F)$ exist, then $\alpha(E\cup F)$ exists and
\[ \alpha(E\cup F)= \alpha(E)+\alpha(F).\]
(iii) (Invariance with respect
to rigid motions) Let $E\subset \Rn$, $x\in \Rn$ and $\mathcal R \in \mathcal {SO}(n)$ be a rotation. Then
\[ \overline \alpha(E+x)=\overline \alpha(E) \quad{\mbox{ and }}\quad \overline \alpha( \mathcal R E)=\overline \alpha(E).\]
(iv) (Scaling) Let $E\subset \Rn$ and $\lambda >0$. Then for some $r>0$ and $q\in \Rn$
\[ \alpha_s(q,r,\lambda E) =  \lambda^{-s} 
\alpha_s\left(\frac{q}{\lambda},\frac{r}{\lambda},E\right)
\quad{\mbox{ and }}\quad
\overline \alpha(\lambda E) =\overline \alpha(E).\] 
(v) (Symmetric difference)
Let  $E, F\subset \Rn$. Then for every $r>0$ and $q\in \Rn$
\[ |\alpha_s(q,r,E)-\alpha_s(q,r,F)|\leq \alpha_s(q,r,E\Delta F).\]
As a consequence, if $|E\Delta F|<\infty$ and $\alpha(E)$  exists, then $\alpha(F)$ exists and  
\[\alpha(E)=\alpha(F). \]
\end{prop}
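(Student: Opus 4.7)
The strategy is to verify each of the five properties at the level of the integrals $\alpha_s(q,r,E)$ and then pass to the limit, invoking Proposition \ref{unifrq} to absorb any change of center/radius that arises. All five items follow by direct computation from the definition
\[\alpha_s(q,r,E)=\int_{\Co B_r(q)}\frac{\chi_E(y)}{|q-y|^{n+s}}\,dy,\]
combined with linearity of the integral and standard properties of $\limsup$.

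For (i), the inclusion $E\setminus B_r(q)\subset F\setminus B_r(q)$ gives $\chi_E\leq\chi_F$ on $\Co B_r(q)$, hence $\alpha_s(q,r,E)\leq\alpha_s(q,r,F)$; multiplying by $s$ and taking $\limsup$ and then using \eqref{claimalpha} (with $\Omega$ a neighbourhood of $q$) yields $\overline\alpha(E)\leq\overline\alpha(F)$. For (ii), the disjointness outside $B_r(q)$ gives $\chi_{E\cup F}=\chi_E+\chi_F$ on $\Co B_r(q)$, so $\alpha_s(q,r,E\cup F)=\alpha_s(q,r,E)+\alpha_s(q,r,F)$; subadditivity of $\limsup$ then gives the first claim, and if both $\alpha(E)$ and $\alpha(F)$ exist as genuine limits then so does the sum, giving $\alpha(E\cup F)=\alpha(E)+\alpha(F)$.

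For (iii), I would use the translation-invariance of Lebesgue measure and the change of variables $z=y-x$ to rewrite
\[s\int_{\Co B_R}\frac{\chi_{E+x}(y)}{|y|^{n+s}}\,dy=s\,\alpha_s(-x,R,E)+o(1),\]
and then apply the uniform-in-$q$ statement \eqref{claimalpha} (with $\Omega$ containing $-x$) to conclude $\overline\alpha(E+x)=\overline\alpha(E)$. For rotations, the change of variable $z=\mathcal R^{-1}y$ preserves both the integration domain $\Co B_R$ and the weight $|y|^{-n-s}$, so $\alpha_s(0,R,\mathcal RE)=\alpha_s(0,R,E)$ identically in $s$. For (iv), the substitution $z=y/\lambda$ yields the exact identity $\alpha_s(q,r,\lambda E)=\lambda^{-s}\alpha_s(q/\lambda,r/\lambda,E)$; since $\lambda^{-s}\to 1$ as $s\to 0^+$, multiplying by $s$ and taking $\limsup$ together with \eqref{claimalpha} (applied at center $0$ and radius $1/\lambda$, which is admissible for any $R>0$ by \eqref{baralpha}) gives $\overline\alpha(\lambda E)=\overline\alpha(E)$.

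For (v), observe that $|\chi_E-\chi_F|=\chi_{E\Delta F}$ pointwise, so the triangle inequality under the integral sign immediately gives $|\alpha_s(q,r,E)-\alpha_s(q,r,F)|\leq\alpha_s(q,r,E\Delta F)$. If $|E\Delta F|<\infty$, Remark \ref{finmeas} yields $\alpha(E\Delta F)=0$, hence $\lim_{s\to 0^+}s\,\alpha_s(q,r,E\Delta F)=0$, so multiplying by $s$ and sending $s\to 0^+$ forces $\alpha(F)$ to exist and equal $\alpha(E)$. The only mild subtlety throughout is in items (iii) and (iv), where the change of variables relocates the center or rescales the radius of the ball; the key point there is that Proposition \ref{unifrq} precisely tells us that $s\,\alpha_s(q,r,E)$ has the same $\limsup$ regardless of the particular $(q,r)$ used, so the geometric invariances of $\overline\alpha$ are a direct consequence of the invariance of the definition under changes of ball.
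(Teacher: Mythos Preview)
Your proof is correct and follows essentially the same approach as the paper: each item is verified at the level of the integrals $\alpha_s(q,r,\cdot)$ and then one passes to the $\limsup$, using Proposition~\ref{unifrq} (specifically \eqref{claimalpha}) to handle the change of center/radius in (i), (iii), (iv), and Remark~\ref{finmeas} in (v). One minor remark: in (iii) the change of variables $z=y-x$ gives the exact identity $\alpha_s(0,R,E+x)=\alpha_s(-x,R,E)$, so the ``$+o(1)$'' you wrote is unnecessary.
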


\begin{proof} (i) It is enough to notice that for every $s\in (0,1)$
\[ \alpha_s(q, r,E) \leq \alpha_s(q, r,F) .\]
Then, passing to limsup and recalling \eqref{claimalpha} we conclude that
\[\overline \alpha(E)\leq \overline \alpha(F).\]
(ii)  We notice that for every $s\in (0,1)$ \[ \alpha_s(q,r,E\cup F) = \alpha_s(q,r,E)+\alpha_s(q,r, F) \]  
and passing to limsup and liminf as $s\to 0^+$  we obtain the desired claim.\\
(iii) By a change of variables,
we have that
\[ \alpha_s(0,1,E+x)= \int_{\Co B_1} \frac{\chi_{E+x}(y)}{|y|^{n+s}}\, dy = \int_{\Co B_1(-x)} \frac{\chi_{E}(y)}{|x+y|^{n+s}}\, dy= \alpha_s(-x,1,E).\]
Accordingly, the invariance by translation 
follows after passing to limsup and using \eqref{claimalpha}. \\
In addition, the invariance by rotations is obvious, using a change of variables.\\
(iv) Changing the variable $y=\lambda x$ we deduce that 
\[ \alpha_s(q,r,\lambda E)=\int_{\Co B_r(q)}\frac{\chi_{\lambda E} (y)}{|q-y|^{n+s}}\, dy
=\lambda ^{-s}\int_{\Co B_{\frac{r}{\lambda}}(\frac{q}{\lambda})} \frac{\chi_E(x)}{|\frac{q}{\lambda}-x|^{n+s}}\, dx
=\lambda^{-s} \alpha_s\left(\frac{q}{\lambda},\frac{r}{\lambda},E\right).\] Hence, the claim follows by passing to limsup as $s\to 0^+$.\\
(v) We have that
\[ |\alpha_s(q,r,E)-\alpha_s(q,r,F)| \leq \int_{\Co B_r(q)} \frac{ |\chi_{E}(y)-\chi_F(y)|}{|y-q|^{n+s}}\, dy = \int_{\Co B_r(q)} \frac{ \chi_{E\Delta F}(y)}{|y-q|^{n+s}}\, dy = \alpha_s(q,r,E\Delta F).\]
The second part of the claim follows applying the Remark \ref{finmeas}.
\end{proof}

We recall the definition (see (3.1) in  \cite{asympt1})
\[\mu(E):=\lim_{s \to 0^+} s P_s(E,\Omega),\]
where $\Omega$ is a bounded open set with $C^2$ boundary.
Moreover, we define 
\[ \overline \mu(E)= \limsup_{s\to 0^+} sP_s(E,\Omega)\]
and give the following result:
\begin{prop}\label{barmubaral} Let $\Omega\subset\Rn$ be a bounded open set with finite classical perimeter
and let $E_0\subset\Co \Omega$. Then
\[ \overline \mu(E_0)= \overline \alpha(E_0) |\Omega|.\]
\end{prop}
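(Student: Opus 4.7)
My plan is to split $sP_s(E_0,\Omega)$ at the scale $|x-y|=1$ into a "far" contribution that will yield $\overline\alpha(E_0)|\Omega|$ in the $\limsup$, and a "near" contribution that will vanish as $s\to 0^+$.

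To begin, the assumption $E_0\subset\Co\Omega$ forces $E_0\cap\Omega=\emptyset$, so the definition of $P_s$ collapses to $\mathcal L_s(E_0,\Omega)$. Splitting the inner integral at distance $1$ from $x\in\Omega$, I decompose
\[
sP_s(E_0,\Omega)\;=\;A_s+B_s,
\]
where
\[
A_s:=s\int_\Omega \alpha_s(x,1,E_0)\,dx,\qquad
B_s:=s\int_\Omega\int_{E_0\cap B_1(x)}\frac{dy}{|x-y|^{n+s}}\,dx,
\]
and I estimate each piece separately.

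For the far piece $A_s$, I apply Proposition \ref{unifrq} with $K=\overline\Omega$ and $r=1$, which gives $s|\alpha_s(x,1,E_0)-\alpha_s(0,1,E_0)|\to 0$ uniformly in $x\in\overline\Omega$. Integrating over the finite-measure set $\Omega$ and using the definition of $\overline\alpha(E_0)$, this yields
\[
\limsup_{s\to 0^+} A_s \;=\; |\Omega|\,\limsup_{s\to 0^+} s\,\alpha_s(0,1,E_0) \;=\; \overline\alpha(E_0)\,|\Omega|.
\]

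For the near piece $B_s$, I first use $E_0\subset\Co\Omega$ to pass to the $\Omega$-only bound $E_0\cap B_1(x)\subset\Co\Omega\cap B_1(x)$, then change variables $z=y-x$ and apply Fubini to obtain
\[
B_s \;\le\; s\int_{B_1(0)} \frac{|\Omega\setminus(\Omega-z)|}{|z|^{n+s}}\,dz.
\]
The classical-perimeter hypothesis is then exploited via the standard $BV$ translation estimate
\[
|\Omega\setminus(\Omega-z)|\;=\;\tfrac12\int_{\Rn}\bigl|\chi_\Omega(x)-\chi_\Omega(x+z)\bigr|\,dx\;\le\;\tfrac{|z|}{2}P(\Omega),
\]
which produces
\[
B_s \;\le\; \frac{sP(\Omega)}{2}\int_{B_1(0)}|z|^{1-n-s}\,dz \;=\; \frac{sP(\Omega)\,\omega_n}{2(1-s)}\;\xrightarrow[s\to 0^+]{}\;0.
\]
Since $B_s\ge 0$ and $B_s\to 0$, it follows that $\overline\mu(E_0)=\limsup_{s\to 0^+}(A_s+B_s)=\limsup_{s\to 0^+}A_s=\overline\alpha(E_0)|\Omega|$. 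I expect the main subtlety to be precisely this near-field vanishing: the kernel $|x-y|^{-n-s}$ is singular when $y\in E_0$ approaches $\partial\Omega$, and the $BV$ translation identity is exactly what converts this pointwise singularity into an integral bound that depends only on the classical perimeter of $\Omega$, matching the hypothesis at hand.
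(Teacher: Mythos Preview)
Your proof is correct. The overall strategy matches the paper's—split the double integral into a far contribution that carries $\overline\alpha(E_0)|\Omega|$ and a near contribution that vanishes as $s\to0^+$—but the technical implementation differs in two places worth noting.

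For the far piece, the paper redoes an $\eps$-approximation argument (comparing $|x-y|$ with $(1\pm\eps)|x|$ on $\Co B_{R/\eps}$ and sending $\eps\to0$ at the end), whereas you simply invoke Proposition~\ref{unifrq}, which already packages exactly this uniform comparison. Your route is cleaner and avoids repeating work.

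For the near piece, the paper bounds $\int_\Omega\int_{B_1(y)}\frac{\chi_{E_0}(x)}{|x-y|^{n+s}}\,dx\,dy$ by $P_{1/2}(\Omega)$ and then appeals (implicitly) to the fact that finite classical perimeter implies finite fractional perimeter. You instead use the BV translation identity $|\Omega\setminus(\Omega-z)|\le\tfrac{|z|}{2}P(\Omega)$ directly, which is more elementary and makes the role of the finite-perimeter hypothesis completely transparent. Both approaches yield an $O(s)$ bound on the near term; yours is arguably the more self-contained.
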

\begin{proof}
Let $R>0$ be fixed such that $\Omega \subset B_R$, $y\in \Omega$ be any fixed point and $\eps\in(0,1)$ be small enough such that $R/\eps> R+1$. This choice of $\eps$ assures that $B_1(y)\subset B_{R/\eps}$. 
We have that
\[ \int_{\Rn} \frac{\chi_{E_0}(x)}{|x-y|^{n+s}}\, dx =  \int_{\Co B_{R/\eps} } \frac{\chi_{E_0}(x)}{|x-y|^{n+s}}\, dx  + \int_{ B_{R/\eps}\setminus B_{1}(y)} \frac{\chi_{E_0}(x)}{|x-y|^{n+s}}\, dx + \int_{  B_{1}(y)} \frac{\chi_{E_0}(x)}{|x-y|^{n+s}}\, dx  .\]
Since $|x-y|\geq (1-\eps)|x|$ whenever $x\in \Co B_{R/\eps}$, we get
\[ \int_{\Co B_{R/\eps} } \frac{\chi_{E_0}(x)}{|x-y|^{n+s}}\, dx\leq (1-\eps)^{-n-s}\int_{\Co B_{R/\eps} } \frac{\chi_{E_0}(x)}{|x|^{n+s}}\, dx.\]
Also we have that
\[ \int_{ B_{R/\eps}\setminus B_{1}(y)} \frac{\chi_{E_0}(x)}{|x-y|^{n+s}}\, dx\leq \omega_n\int_1^{R/\eps+R} \rho^{-s-1}\, d\rho \leq \omega_n\frac{1- \left(\frac{R}{\eps}+R\right)^{-s}}s.\]
Also, we can assume that $s<1/2$ 
(since we are interested in what happens for $s\to 0$). In this way,
if $|x-y|<1$ we have that $|x-y|^{-n-s}\leq|x-y|^{-n-\frac{1}{2}}$, and so
\[\int_{  B_1(y)} \frac{\chi_{E_0}(x)}{|x-y|^{n+s}}\, dx\leq \int_{  B_1(y)} \frac{\chi_{E_0}(x)}{|x-y|^{n+\frac{1}2}}\, dx.\] 
Also, since $E_0\subset\Co\Omega$, we have that 
\[\int_{  B_1(y)} \frac{\chi_{E_0}(x)}{|x-y|^{n+\frac{1}2}}\, dx  \leq \int_{  B_1(y)\setminus \Omega} \frac{dx}{|x-y|^{n+\frac{1}2}}
\leq \int_{\Co \Omega} \frac{dx}{|x-y|^{n+\frac{1}2}}.  \] 
This means that
\[\int_{\Omega} \int_{  B_1(y)} \frac{\chi_{E_0}(x)}{|x-y|^{n+s}}\, dx\,dy\leq \int_{\Omega} \int_{\Co \Omega} \frac{dx}{|x-y|^{n+\frac{1}2}}=P_{\frac{1}2}(\Omega)=c<\infty,\]
since $\Omega$ has a finite classical perimeter.
In this way, it follows that
\eqlab{\label{mumu1} sP_s(E_0,\Omega) = \int_{\Omega} \int_{\Rn}\frac{\chi_{E_0}(x) }{|x-y|^{n+s}} \, dx \, dy\leq  &\;s (1-\eps)^{-n-s} |\Omega|\int_{\Co B_{R/\eps} } \frac{\chi_{E_0}(x)}{|x|^{n+s}}\, dx\\
&\; +\omega_n\Big(1- \Big(\frac{R}{\eps}+R\Big)^{-s}\Big)|\Omega| +sc. }
Furthermore, notice that if $x\in B_{R/\eps}$ we have that $|x-y|\leq (1+\eps)|x|$, hence
\[ \int_{\Rn}\frac{\chi_{E_0}(x)}{|x-y|^{n+s}}\, dx\geq \int_{\Co B_{{R}/{\eps}}} \frac{\chi_{E_0}(x)}{|x-y|^{n+s}} \, dx \geq (1+\eps)^{-n-s} \int_{\Co B_{{R}/{\eps}}} \frac{\chi_{E_0}(x)}{|x|^{n+s}} \, dx.  \]
Thus for any $\eps>0$
\[ sP_s(E_0,\Omega) \geq s |\Omega|  (1+\eps)^{-n-s}  \int_{\Co B_{{R}/{\eps}}} \frac{\chi_{E_0}(x)}{|x|^{n+s}} \, dx .\] 
Passing to  limsup as $s\to 0^+$ here above and in \eqref{mumu1} it follows that
\[ (1+\eps)^{-n} \overline \alpha(E_0)\,  |\Omega| \leq \overline \mu(E_0) \leq (1-\eps)^{-n} \overline \alpha(E_0)\, |\Omega| .\]
Sending $\eps \to 0$, we obtain the desired conclusion.
\end{proof}

\section{Classification of nonlocal minimal surfaces for small $s$}\label{classify}

\subsection{Asymptotic estimates of the density (Theorem \ref{notfull})}\label{sectnotfull}

 
The importance of Theorem \ref{notfull} is threefold:
\begin{itemize}
\item first of all, it is an interesting result in itself, by stating (in the usual hypothesis in which the contribution from infinity of the exterior data $E_0$ is less than that of a half-space) that any ball of fixed radius, centered at some $x\in\overline\Omega$, contains at least a portion of the complement of an $s$-minimal set $E$, when $s$ is small enough. We further observe that Theorem \ref{notfull} actually provides a ``uniform'' measure theoretic estimate of how big this portion is, purely in terms of the fixed datum
$\overline{\alpha}(E_0)$.

\item Moreover, we point out that Definition \ref{wild} does not exlude apriori ``full'' sets, i.e. sets $E$ such that $E\cap\Omega=\Omega$.  Hence, in the situation of point $(A)$ of Theorem \ref{THM}, one may wonder whether an $s$-minimal set $E$, which is
$\delta_s$-dense, can actually completely cover $\Omega$.  The answer is no: Theorem \ref{notfull}
proves in particular that the contribution from infinity forces the domain $\Omega$, for $s$ small enough, to contain at least
a non-trivial portion of the complement of $E$.

\item Finally, the density estimate of Theorem \ref{notfull} serves as an auxiliary result for the proof of part (B) of our main Theorem \ref{THM}.
\end{itemize}

 \begin{proof}[Proof of Theorem \ref{notfull}]
We begin with two easy but useful preliminary remarks. We
observe that,
given a set $F\subset\R^n$ and two open sets $\Omega'\subset\Omega$, we have
\eqlab{\label{monny}
P_s(F,\Omega')\leq P_s(F,\Omega).
}
Also, we point out that,
given an open set $\mathcal O\subset\R^n$ and a set $F\subset\R^n$, then by the definition \eqref{nmspf1}
of the fractional perimeter,
it holds
\eqlab{\label{Yamamoto}
F\cap\Omega=\emptyset\quad\implies\quad
P_s(F,\mathcal O)=\int_F\int_{\mathcal O}\frac{dx\,dy}{|x-y|^{n+s}}.
}

With these observations at hand, we are ready to proceed with the proof of the Theorem.
We argue by contradiction.

Suppose that there exists $\delta>0$ and $\gamma\in(0,1)$ for which we can find a sequence $s_k\searrow0$,
a sequence of sets $\{E_k\}$ such that each $E_k$ is $s_k$-minimal in $\Omega$ with exterior data $E_0$, and a sequence
of points $\{x_k\}\subset\overline{\Omega}$ such that
\eqlab{\label{density_contrad_proof}
\big|(\Omega\cap B_\delta(x_k))\setminus E_k\big|< \gamma\,\frac{\omega_n-2\overline{\alpha}(E_0)}{\omega_n-\overline{\alpha}(E_0)}\big|\Omega\cap B_\delta(x_k)\big|.
}
As a first step, we are going to exploit \eqref{density_contrad_proof} in order to obtain a
bound from below for the limit as $k\to\infty$ of
$s_k P_{s_k}(E_k,\Omega\cap B_\delta(x_k))$ (see the forthcoming inequality \eqref{dolph1}).

First of all we remark that, since $\overline{\Omega}$ is compact, up to passing to subsequences we can suppose that $x_k\longrightarrow x_0$, for some $x_0\in\overline{\Omega}$.
Now we observe that from \eqref{density_contrad_proof} it follows that 
\bgs{
|E_k\cap(\Omega\cap B_\delta(x_k))\big|=|\Omega\cap B_\delta(x_k)|-\big|(\Omega\cap B_\delta(x_k))\setminus E_k\big|
>\frac{(1-\gamma)\omega_n-(1-2\gamma)\overline{\alpha}(E_0)}{\omega_n-\overline{\alpha}(E_0)}\,|\Omega\cap B_\delta(x_k)|,
}
and hence, since $x_k\longrightarrow x_0$,
\eqlab{\label{Esti_pf_eqn1}
\liminf_{k\to\infty}|E_k\cap(\Omega\cap B_\delta(x_k))\big|
\geq\frac{(1-\gamma)\omega_n-(1-2\gamma)\overline{\alpha}(E_0)}{\omega_n-\overline{\alpha}(E_0)}\,|\Omega\cap B_\delta(x_0)|.
}
Notice that, since $\Omega$ is bounded, we can find $R>0$ such that $\Omega\subset\subset B_R(q)$ for every $q\in\overline{\Omega}$.
Then
we obtain that
\bgs{
P_{s_k}(E_k,\Omega\cap B_\delta(x_k))&\ge
\int_{E_k\cap(\Omega\cap B_\delta(x_k))}
\Big(\int_{\Co E_k\setminus(\Omega\cap B_\delta(x_k))}\frac{dz}{|y-z|^{n+s_k}}\Big)dy\\
&
\ge\int_{E_k\cap(\Omega\cap B_\delta(x_k))}\Big(\int_{\Co \Omega}\frac{\chi_{\Co E_0}(z)}{|y-z|^{n+s_k}}\,dz\Big)dy\\
&\ge \int_{E_k\cap(\Omega\cap B_\delta(x_k))}\Big(\inf_{q\in \overline \Omega } \int_{\Co \Omega}\frac{\chi_{\Co E_0}(z)}{|q-z|^{n+s_k}}\,dz\Big)dy\\
 &
\ge\big|E_k\cap(\Omega\cap B_\delta(x_k))\big|\inf_{q\in\overline{\Omega}}\int_{\Co B_R(q)}
\frac{\chi_{\Co E_0}(z)}{|q-z|^{n+s_k}}\,dz.
}
So, thanks to Proposition \ref{unifrq} and recalling \eqref{Esti_pf_eqn1}, we find
\eqlab{\label{dolph1}
\liminf_{k\to\infty}s_k P_{s_k}&(E_k,\Omega\cap B_\delta(x_k))\\
&
\ge\Big(\liminf_{k\to\infty}|E_k\cap(\Omega\cap B_\delta(x_k))\big|\Big)\Big(\liminf_{k\to\infty}s_k\,
\inf_{q\in\overline{\Omega}}\int_{\Co B_R(q)}\frac{\chi_{\Co E_0}(z)}{|q-z|^{n+s_k}}\,dz\Big)\\
&
=\big(\omega_n-\overline{\alpha}(E_0)\big)\Big(\liminf_{k\to\infty}|E_k\cap(\Omega\cap B_\delta(x_k))\big|\Big)\\
&
\geq\big(\omega_n-\overline{\alpha}(E_0)\big)
\frac{(1-\gamma)\omega_n-(1-2\gamma)\overline{\alpha}(E_0)}{\omega_n-\overline{\alpha}(E_0)}\,|\Omega\cap B_\delta(x_0)|.
}

On the other hand, as a second step we claim that
\eqlab{\label{contrad_dens_limsup}
\limsup_{k\to\infty}s_k P_{s_k}(E_k,\Omega\cap B_\delta(x_k))\le\overline{\alpha}(E_0)\big|\Omega\cap B_\delta(x_0)\big|.
}
We point out that obtaining the inequality \eqref{contrad_dens_limsup} is a crucial step of the proof. Indeed,
exploiting both \eqref{contrad_dens_limsup} and \eqref{dolph1}, we obtain
\eqlab{\label{contrad_dens_liminf}
\overline{\alpha}(E_0)\,|\Omega\cap B_\delta(x_0)|\ge\liminf_{k\to\infty}s_k P_{s_k}&(E_k,\Omega\cap B_\delta(x_k))\ge
\big((1-\gamma)\omega_n-(1-2\gamma)\overline{\alpha}(E_0)\big)|\Omega\cap B_\delta(x_0)|.}
Then, since $x_0\in\overline{\Omega}$ implies that
\[
|\Omega\cap B_\delta(x_0)|>0,
\]
by \eqref{contrad_dens_liminf} we get
\[
\overline{\alpha}(E_0)\ge(1-\gamma)\omega_n-(1-2\gamma)\overline{\alpha}(E_0)
\quad\textrm{ that is }\quad
(1-\gamma)\overline{\alpha}(E_0)\ge(1-\gamma)\frac{\omega_n}{2}.
\]
Therefore, since $\gamma\in(0,1)$ and by hypothesis $\overline{\alpha}(E_0)<\frac{\omega_n}{2}$, we reach a contradiction, concluding the proof.

We are left to prove \eqref{dolph1}. For this, we exploit the minimality of the sets $E_k$ in order to compare the $s_k$-perimeter
of $E_k$ with the $s_k$-perimeter of appropriate competitors $F_k$.

We first remark that, since $x_k\longrightarrow x_0$, for every $\eps>0$
there exists $\tilde k_\eps$ such that
\eqlab{\label{subset_balls_eps}
\Omega\cap B_\delta (x_k)\subset\Omega\cap B_{\delta+\eps}(x_0),\qquad
\forall\,k\ge\tilde k_\eps.}
We fix a small $\eps>0$. We will let $\eps\to0$ later on.

We also observe that, since $E_k$ is $s_k$-minimal in $\Omega$, it is $s_k$-minimal also in every
$\Omega'\subset\Omega$, hence in particular in $\Omega \cap B_{\delta+\eps}(x_0)$.
Now we proceed to define the sets
\eqlab{\label{density_competitor_def}
F_k:=E_0\cup(E_k\cap(\Omega\setminus B_{\delta+\eps}(x_0)))
=E_k\setminus \big(\Omega\cap B_{\delta+\eps}(x_0)\big).
}
Then, by \eqref{monny}, \eqref{subset_balls_eps}, \eqref{density_competitor_def} and by the minimality of $E_k$
in $\Omega \cap B_{\delta+\eps}(x_0)$, for every $k\ge\tilde k_\eps$ we find that
\bgs{
P_{s_k}(E_k,\Omega\cap B_\delta(x_k))\le P_{s_k}(E_k,\Omega\cap B_{\delta+\eps}(x_0))
\le P_{s_k}(F_k,\Omega\cap B_{\delta+\eps}(x_0)).
}
We observe that by the definition \eqref{density_competitor_def} we have that
\[
F_k\cap\big(\Omega\cap B_{\delta+\eps}(x_0)\big)=\emptyset.
\] 
Therefore, recalling \eqref{Yamamoto}
and the definition \eqref{density_competitor_def} of the sets $F_k$, we obtain that
\bgs{
P_{s_k}(F_k,\Omega\cap B_{\delta+\eps}(x_0))&
=\int_{E_0\cup(E_k\cap(\Omega\setminus B_{\delta+\eps}(x_0)))}\int_{\Omega\cap B_{\delta+\eps}(x_0)}
\frac{dy\,dz}{|y-z|^{n+s_k}}\\
&
=\int_{E_0}\int_{\Omega\cap B_{\delta+\eps}(x_0)}\frac{dy\,dz}{|y-z|^{n+s_k}}
+
\int_{E_k\cap(\Omega\setminus B_{\delta+\eps}(x_0))}\int_{\Omega\cap B_{\delta+\eps}(x_0)}\frac{dy\,dz}{|y-z|^{n+s_k}}\\
&
\le\int_{E_0}\int_{\Omega\cap B_{\delta+\eps}(x_0)}\frac{dy\,dz}{|y-z|^{n+s_k}}
+\int_{\Omega\setminus B_{\delta+\eps}(x_0)}\int_{\Omega\cap B_{\delta+\eps}(x_0)}\frac{dy\,dz}{|y-z|^{n+s_k}}\\
&
=:I^1_k+I^2_k.
}
Furthermore, again by \eqref{Yamamoto}, we have that 
\eqlab{\label{dens_asympt_pf}
I^1_k=P_{s_k}(E_0,\Omega\cap B_{\delta+\eps}(x_0))
\quad\mbox{and}\quad
I^2_k=P_{s_k}(\Omega\setminus B_{\delta+\eps}(x_0),\Omega\cap B_{\delta+\eps}(x_0)).
}
We observe that the open set $\Omega\cap B_{\delta+\eps}(x_0)$ has finite classical perimeter.
Thus, we can exploit the equalities \eqref{dens_asympt_pf} and apply Proposition \ref{barmubaral} twice, obtaining
\[
\limsup_{k\to\infty}s_k I^1_k\le
\overline{\alpha}(E_0)\big|\Omega\cap B_{\delta+\eps}(x_0)\big|,
\]
and
\eqlab{\label{dens_asympt_pf1}
\limsup_{k\to\infty}s_k I^2_k\le\overline{\alpha}(\Omega\setminus B_{\delta+\eps}(x_0))\big|\Omega\cap B_{\delta+\eps}(x_0)\big|,
}
for every $\eps>0$. Also notice that, since $\Omega$ is bounded, by Remark \ref{finmeas} we have
\[
\overline{\alpha}(\Omega\setminus B_{\delta+\eps}(x_0))=\alpha(\Omega\setminus B_{\delta+\eps}(x_0))=0,
\]
and hence, by \eqref{dens_asympt_pf1},
\[
\lim_{k\to\infty}s_kI^2_k=0.
\]
Therefore, combining these computations we find that
\bgs{
\limsup_{k\to\infty}s_k P_{s_k}(E_k,\Omega\cap B_\delta(x_k))\le
\limsup_{k\to\infty}s_k I^1_k\le
\overline{\alpha}(E_0)\big|\Omega\cap B_{\delta+\eps}(x_0)\big|,
}
for every $\eps>0$ small.
To conclude, we let $\eps\to0$ and we obtain \eqref{contrad_dens_limsup}.
\end{proof}

It is interesting to observe that, as a straightforward consequence of Theorem \ref{notfull}, when $\alpha(E_0)=0$ we
know that any sequence of $s$-minimal sets is asymptotically empty inside $\Omega$, as $s\to0^+$. More precisely 
\begin{corollary}
Let $\Omega\subset\R^n$ be a bounded open set of finite classical perimeter and let $E_0\subset\Co\Omega$ be such that
$\alpha(E_0)=0$. Let $s_k\in(0,1)$ be such that $s_k\searrow0$ and let $\{E_k\}$
be a sequence of sets such that each $E_k$ is $s_k$-minimal in $\Omega$ with exterior data $E_0$. Then
\[\lim_{k\to\infty}|E_k\cap\Omega|=0.\]
\end{corollary}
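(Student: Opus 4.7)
The plan is a direct covering argument on top of Theorem \ref{notfull}. Since $\alpha(E_0)=0$, in particular $\overline{\alpha}(E_0)=0<\omega_n/2$, so Theorem \ref{notfull} applies, and for every $\delta>0$ and $\gamma\in(0,1)$ the conclusion \eqref{1666} simplifies to
\[|(\Omega\cap B_\delta(x))\cap E|\le(1-\gamma)\,|\Omega\cap B_\delta(x)|\qquad\forall\,x\in\overline{\Omega},\]
whenever $E$ is $s$-minimal in $\Omega$ with exterior data $E_0$ and $s<\sigma_{\delta,\gamma}(E_0,\Omega)$. In particular, this inequality applies to $E_k$ as soon as $s_k<\sigma_{\delta,\gamma}$.

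Next, I would fix once and for all some $\delta>0$ (say $\delta=1$). Since $\overline{\Omega}$ is compact, I can cover it by finitely many balls $B_\delta(x_1),\ldots,B_\delta(x_N)$ centered at points $x_i\in\overline{\Omega}$, where $N=N(\delta,\Omega)$ depends only on $\delta$ and $\Omega$. Summing the previous inequality over the $x_i$ yields, for every $k$ with $s_k<\sigma_{\delta,\gamma}$,
\[|E_k\cap\Omega|\le\sum_{i=1}^N|E_k\cap\Omega\cap B_\delta(x_i)|\le(1-\gamma)\sum_{i=1}^N|\Omega\cap B_\delta(x_i)|\le(1-\gamma)\,N\,\frac{\omega_n}{n}\delta^n.\]

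Finally, given any $\eps>0$, I would choose $\gamma\in(0,1)$ so close to $1$ that the right-hand side is smaller than $\eps$. Because $s_k\searrow0$, there exists $k_0$ such that $s_k<\sigma_{\delta,\gamma}$ for all $k\ge k_0$, and hence $|E_k\cap\Omega|<\eps$ for every such $k$. This gives $|E_k\cap\Omega|\to0$, as desired. The only subtlety — and certainly not a real obstacle — is the order of quantifiers: one must fix $\delta$ first, so that the cardinality $N$ of the finite covering is determined, and only afterwards exploit the freedom in $\gamma$ to make $(1-\gamma)N\omega_n\delta^n/n$ arbitrarily small.
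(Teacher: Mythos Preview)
Your proof is correct and follows essentially the same approach as the paper: fix $\delta$, take a finite cover of $\overline{\Omega}$ by balls, apply Theorem \ref{notfull} (with the fraction equal to $1$ since $\overline{\alpha}(E_0)=0$), sum, and let $\gamma\to1^-$. The only cosmetic difference is that the paper bounds by $(1-\gamma)\sum_i|\Omega\cap B_\delta(x_i)|$ rather than by $(1-\gamma)N\omega_n\delta^n/n$, but this is immaterial.
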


\begin{proof}
Fix $\delta>0$. Since $\overline{\Omega}$ is compact, we can find a finite number of points $x_1,\dots,x_m\in\overline{\Omega}$
such that
\[\overline{\Omega}\subset\bigcup_{i=1}^mB_\delta(x_i).\]
By Theorem \ref{notfull} (by using the fact that $\alpha(E_0)=0$) we know that for every $\gamma\in(0,1)$ we can find a $k(\gamma)$ big enough such that
\bgs{
\big|(\Omega\cap B_\delta(x_i))\setminus E _k\big|\ge\gamma\, \big|\Omega\cap B_\delta(x_i)\big|.
}
Then, 
\bgs{
		\big|E_k\cap(\Omega\cap B_\delta(x_i))\big|= \big|\Omega\cap B_\delta(x_i)\big|- 	\big|(\Omega\cap B_\delta(x_i))\setminus E_k\big|
				\le 
(1-\gamma)|\Omega\cap B_\delta(x_i)|,
}
for every $i=1,\dots,m$ and every $k\ge k(\gamma)$. Thus
\[|E_k\cap\Omega|\le(1-\gamma)\sum_{i=1}^m|\Omega\cap B_\delta(x_i)|,\]
for every $k\ge k(\gamma)$, and hence
\[\limsup_{k\to\infty}|E_k\cap\Omega|\le(1-\gamma)\sum_{i=1}^m|\Omega\cap B_\delta(x_i)|,\]
for every $\gamma\in(0,1)$. Letting $\gamma\longrightarrow1^-$ concludes the proof.
\end{proof}

We recall here that any set $E_0$ of finite measure has $\alpha(E_0)=0$ (check Remark \ref{finmeas}).

\subsection{Estimating the fractional mean curvature (Theorem \ref{positivecurvature})}\label{estimatecurvature}

Thanks to the previous preliminary work, we are now in the position
of completing the proof of Theorem \ref{positivecurvature}.

\begin{proof}[Proof of Theorem \ref{positivecurvature}]
Let $R:=2\,\max\{1,\textrm{diam}(\Omega)\}$. First of all, \eqref{claimalpha} implies that
\bgs{ \liminf_{s\to 0^+} \bigg(\omega_n R^{-s} -2s\sup_{q\in \overline \Omega} \int_{\Co B_R(q)}\frac{\chi_E(y)}{|q-y|^{n+s}} \, dy \bigg) = {\omega_n-2\overline \alpha(E_0)}=4 \beta.} Notice that by \eqref{weak_hp_beta}, $\beta>0$. Hence for every $s$ small enough, say $s<s'\leq\frac{1}{2}$ with $s'=s'(E_0,\Omega)$, we have that
\eqlab{\label{tildebeta}\omega_n R^{-s} - 2s\sup_{q\in \overline \Omega} \int_{\Co B_R(q)}\frac{\chi_E(y)}{|q-y|^{n+s}} \, dy \geq \frac {7}2 \beta.}

Now, let $E\subset\Rn$ be such that $E\setminus\Omega=E_0$, suppose that $E$ has an exterior tangent ball
of radius $\delta<R/2$ at $q\in\partial E\cap\overline{\Omega}$, that is
\[B_\delta(p)\subset\Co E\quad\textrm{and}\quad q\in\partial B_\delta(p),\]
and let $s<s'$.
Then for $\rho$ small enough (say $\rho<\delta/2$) we conclude that
\bgs{\label{uno} \mathcal{I}^\rho_s[E](q)= \int_{B_R(q)\setminus B_{\rho}(q)} \frac{\chi_{ \Co E} (y)-\chi_{E}(y)}{|q-y|^{n+s}}\, dy + \int_{\Co B_R(q)} \frac{\chi_{ \Co E} (y)-\chi_{E}(y)}{|q-y|^{n+s}}\, dy.}

 Let $D_\delta= B_\delta(p) \cap B_\delta(p')$, where $p'$ is the symmetric point of $p$ with respect to $q$, i.e. the ball $B_\delta(p')$ is the ball tangent to  $B_\delta(p)$ in $q$. Let also $K_\delta$ be the convex hull of $D_\delta$ and let $P_\delta:=K_\delta-D_\delta$. Notice that $B_\rho(q)\subset K_\delta \subset B_R(q)$ . Then
\bgs{ \int_{B_R(q)\setminus B_{\rho}(q)} \frac{\chi_{ \Co E} (y)-\chi_{E}(y)}{|q-y|^{n+s}}\, dy=& \int_{D_\delta\setminus B_{\rho}(q)} \frac{\chi_{ \Co E} (y)-\chi_{E}(y)}{|q-y|^{n+s}}\, dy +\int_{P_\delta\setminus B_{\rho}(q) } \frac{\chi_{ \Co E} (y)-\chi_{E}(y)}{|q-y|^{n+s}}\, dy\\ &+\int_{B_R(q) \setminus K_\delta} \frac{\chi_{ \Co E} (y)-\chi_{E}(y)}{|q-y|^{n+s}}\, dy.}
Since $B_\delta(p) \subset \Co E$, by symmetry we obtain that
\bgs{  \int_{D_\delta \setminus B_{\rho}(q)} \frac{\chi_{ \Co E} (y)-\chi_{E}(y)}{|q-y|^{n+s}}\, dy =\int_{B_\delta(p)\setminus B_{\rho}(q)} \frac{dy}{|q-y|^{n+s}} +   \int_{B_\delta(p')\setminus B_{\rho}(q) }\frac{\chi_{ \Co E} (y)-\chi_{E}(y)}{|q-y|^{n+s}}\, dy  \geq 0.   }
Moreover, from Lemma 3.1 in \cite{Graph} (here applied with $\lambda =1$) we have that
\[\bigg| \int_{P_\delta \setminus B_{\rho}(q)} \frac{\chi_{ \Co E} (y)-\chi_{E}(y)}{|q-y|^{n+s}}\, dy \bigg|\leq  \int_{P_\delta} \frac{dy}{|q-y|^{n+s}}\leq  \frac{C_0}{1-s} {\delta^{-s} }
 ,\] 
with $C_0=C_0(n)>0$.
Notice that $B_\delta(q)\subset K_\delta$  so
\bgs{\bigg | \int_{B_R(q)\setminus K_\delta } \frac{\chi_{ \Co E} (y)-\chi_{E}(y)}{|q-y|^{n+s}}\, dy\bigg | \leq &\;\int_{B_R(q)\setminus B_\delta(q)} \frac{dy}{|q-y|^{n+s}} = \omega_n  \frac{\delta^{-s}- R^{-s}}{s} .} 
Therefore for every $\rho <\delta/2$
one has that
\bgs{\label{due}\int_{B_R(q)\setminus B_\rho(q)} \frac{\chi_{ \Co E} (y)-\chi_{E}(y)}{|q-y|^{n+s}}\, dy \geq -\frac{C_0}{1-s}\delta^{-s} - \frac{\omega_n}{s} \delta^{-s} + \frac{\omega_n}{s} R^{-s}  .}
Thus, using \eqref{tildebeta}
\eqlab{\label{uno} \mathcal{I}^\rho_s[E](q)=&\;\int_{B_R(q)\setminus B_\rho(q)} \frac{\chi_{ \Co E} (y)-\chi_{E}(y)}{|q-y|^{n+s}}\, dy + \int_{\Co B_R(q)} \frac{\chi_{ \Co E} (y)-\chi_{E}(y)}{|q-y|^{n+s}}\, dy \\
\geq &\; -\frac{C_0}{1-s}\delta^{-s} - \frac{\omega_n}{s} \delta^{-s} + \frac{\omega_n}{s} R^{-s} + \int_{\Co B_R(q)} \frac{dy}{|q-y|^{n+s}} -  2\int_{ \Co B_R(q)} \frac{\chi_E(y)}{|q-y|^{n+s}}\,dy \\ 
\geq&\; - \delta^{-s} \Big(\frac{C_0}{1-s} +\frac{\omega_n} s\Big) +  \frac{\omega_n}s R^{-s} + \bigg(\frac{\omega_n}s R^{-s}- 2\sup_{q\in \overline \Omega} 
 \int_{ \Co B_R(q)} \frac{\chi_E(y)}{|q-y|^{n+s}}\,dy\bigg)\\
 \geq&\; - \delta^{-s} \Big(\frac{C_0}{1-s} +\frac{\omega_n} s\Big) +  \frac{\omega_n}s R^{-s}  +\frac{7\beta}{2s}\\
 \geq &\; - \delta^{-s} \Big(2C_0 +\frac{\omega_n} s\Big) +  \frac{\omega_n}s R^{-s}  +\frac{7\beta}{2s},}
where we also exploited that $s<s'\leq 1/2$.
Since $R>1$, we have
\[R^{-s}\to1^-,\qquad\textrm{as }s\to0^+.\]
Therefore we can find $s''=s''(E_0,\Omega)$ small enough such that
\[\omega_nR^{-s}\geq\omega_n-\frac{\beta}{2},\qquad\forall s<s''.\]

Now let
\[s_0=s_0(E_0,\Omega):=\min\Big\{s',s'',\frac{\beta}{2C_0}\Big\}.\]
Then, for every $s<s_0$ we have
\eqlab{\label{important_estimate_curv}\I_s^\rho[E](q)&\geq\frac{1}{s}\Big\{-\delta^{-s}\big((2C_0)s+\omega_n\big)+\omega_n R^{-s}+\frac{7}{2}\beta\Big\}\\
&
\geq\frac{1}{s}\big\{-\delta^{-s}(\omega_n+\beta)+\omega_n+3\beta\big\},}
for every $\rho\in(0,\delta/2)$.

Notice that if we fix $s\in(0,s_0)$, then for every
\[\delta\geq e^{-\frac{1}{s}\log\frac{\omega_n+2\beta}{\omega_n+\beta}}=:\delta_s(E_0),\]
we have that
\[-\delta^{-s}(\omega_n+\beta)+\omega_n+3\beta\geq\beta>0.\]
To conclude, we
let $\sigma\in(0,s_0)$ and suppose that $E$ has an exterior tangent ball of radius $\delta_\sigma$
at $q\in\partial E\cap\overline{\Omega}$.
Notice that, since $\delta_\sigma<1$, we have
\[-(\delta_\sigma)^{-s}(\omega_n+\beta)+\omega_n+3\beta\geq
-(\delta_\sigma)^{-\sigma}(\omega_n+\beta)+\omega_n+3\beta=\beta,\qquad\forall\,s\in(0,\sigma].\]
Then \eqref{important_estimate_curv} gives that
\[\liminf_{\rho\to0^+}\I_s^\rho[E](q)\geq\frac{\beta}{s}>0,\qquad\forall\,s\in(0,\sigma],\]
which concludes the proof.
\end{proof}

\begin{remark}
We remark that
\[\log\frac{\omega_n+2\beta}{\omega_n+\beta}>0,\]
thus
\[\delta_s\to0^+\qquad\textrm{as }s\to 0^+.\]
\end{remark}


As a consequence of Theorem \ref{positivecurvature}, we have that, as $s\to0^+$,
the $s$-minimal sets with small mass at infinity have small mass in $\Omega$.
The precise result goes as follows:

\begin{corollary}
Let $\Omega\subset\Rn$ be a bounded open set,
let $E\subset\Rn$ be such that
\[\overline \alpha(E) <\frac{\omega_n}2,\]
and suppose that $\partial E$ is of class $C^2$ in $\Omega$.
Then, for every $\Omega'\subset\subset\Omega$ there exists
$\tilde s=\tilde s(E\cap\overline{\Omega'})\in(0,s_0)$ such that for every $s\in(0,\tilde s]$
\eqlab{\I_s[E](q)\geq\frac{\omega_n -2\overline \alpha(E)}{4s}>0,\qquad\forall\,q\in\partial E\cap\overline{\Omega'}.}
\end{corollary}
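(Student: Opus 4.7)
The plan is to reduce the statement to Theorem \ref{positivecurvature} applied with exterior data $E_0:=E\setminus\Omega$. Since the symmetric difference $E\,\Delta\,E_0=E\cap\Omega$ is bounded and thus of finite Lebesgue measure, the inequality in Proposition \ref{subsetssmin}(v), combined with Remark \ref{finmeas}, yields $\overline\alpha(E_0)=\overline\alpha(E)<\omega_n/2$. Setting
\[\beta:=\frac{\omega_n-2\overline\alpha(E)}{4}>0,\]
I can therefore invoke Theorem \ref{positivecurvature}, which supplies the constant $s_0=s_0(E_0,\Omega)\in(0,1/2]$ and the radii $\delta_s=\delta_s(E_0)$ with $\delta_s\to 0^+$ as $s\to 0^+$.

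The second step is to extract a uniform exterior tangent ball radius on $\partial E\cap\overline{\Omega'}$. Since $\Omega'\subset\subset\Omega$ and $\partial E$ is of class $C^2$ in $\Omega$, the portion $\partial E\cap\overline{\Omega'}$ is a compact $C^2$ hypersurface; covering it by finitely many $C^2$ graph patches and bounding the principal curvatures uniformly, one obtains $\delta_0=\delta_0(E\cap\overline{\Omega'})>0$ such that at every $q\in\partial E\cap\overline{\Omega'}$ the set $E$ admits an exterior tangent ball of radius $\delta_0$.

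To conclude, I observe that the map $s\mapsto\delta_s=\exp\bigl(-s^{-1}\log\tfrac{\omega_n+2\beta}{\omega_n+\beta}\bigr)$ is strictly increasing on $(0,s_0]$ and vanishes at $0^+$, so I may pick $\tilde s=\tilde s(E\cap\overline{\Omega'})\in(0,s_0)$ small enough to force $\delta_{\tilde s}\leq\delta_0$; monotonicity then gives $\delta_s\leq\delta_0$ for every $s\in(0,\tilde s]$. For any such $s$ and any $q\in\partial E\cap\overline{\Omega'}$, the set $E$ has an exterior tangent ball at $q$ of radius at least $\delta_{\tilde s}$, so Theorem \ref{positivecurvature} applied with $\sigma:=\tilde s$ yields
\[\liminf_{\rho\to 0^+}\I_s^\rho[E](q)\geq\frac{\beta}{s}=\frac{\omega_n-2\overline\alpha(E)}{4s}.\]
Because $\partial E$ is $C^2$ near $q$, the principal value $\I_s[E](q)$ exists as a genuine limit in $\rho$ (the symmetric cancellation tames the near-diagonal singularity), so the $\liminf$ coincides with $\I_s[E](q)$ and the desired pointwise bound follows.

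The only mildly delicate ingredient is the uniform exterior tangent ball on the compact piece $\partial E\cap\overline{\Omega'}$ produced in the second paragraph; everything else is a direct transcription of Theorem \ref{positivecurvature} once the identification $\overline\alpha(E_0)=\overline\alpha(E)$ has been made, and no fresh estimates on the curvature integral are needed.
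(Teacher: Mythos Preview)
Your proof is correct and follows essentially the same approach as the paper: obtain a uniform exterior tangent ball radius $\tilde\delta$ on $\partial E\cap\overline{\Omega'}$ from the $C^2$ regularity, pick $\tilde s<s_0(E\setminus\Omega,\Omega)$ small enough that $\delta_s<\tilde\delta$ for all $s\le\tilde s$, and apply Theorem~\ref{positivecurvature}. You are in fact slightly more explicit than the paper, spelling out the identification $\overline\alpha(E\setminus\Omega)=\overline\alpha(E)$, the monotonicity of $s\mapsto\delta_s$, and the passage from the $\liminf$ bound in \eqref{unif_pos_curv} to the genuine value $\I_s[E](q)$ via the $C^2$ regularity of $\partial E$.
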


\begin{proof}
Since $\partial E$ is of class $C^2$ in $\Omega$ and $\Omega'\subset\subset\Omega$, the set $E$ satisfies a uniform exterior
ball condition of radius $\tilde\delta=\tilde\delta(E\cap\overline{\Omega'})$ in $\overline{\Omega'}$, meaning that
$E$ has an exterior tangent ball of radius at least $\tilde\delta$ at every point $q\in\partial E\cap\overline{\Omega'}$.

Now, since $\delta_s\to0^+$ as $s\to0^+$, we can find $\tilde s=\tilde s(E\cap\overline{\Omega'})
<s_0(E\setminus\Omega,\Omega)$,
small enough such that $\delta_s<\tilde\delta$ for every $s\in(0,\tilde s]$. Then we can conclude by applying Theorem \ref{positivecurvature}.
\end{proof}

\subsection{Classification of $s$-minimal surfaces (Theorem \ref{THM})}\label{alternative}

To classify the behavior of the $s$-minimal surfaces when $s$ is small,
we need to take into account the ``worst case scenario'',
that is the one in which the set behaves very badly in terms
of oscillations and lack of regularity. To this aim,
we make an observation about $\delta$-dense sets. See Figure \ref{fig:x3}.

   \begin{center}
\begin{figure}[htpb]
	\hspace{0.79cm}
	\begin{minipage}[b]{0.79\linewidth}
	\centering
	\includegraphics[width=0.79\textwidth]{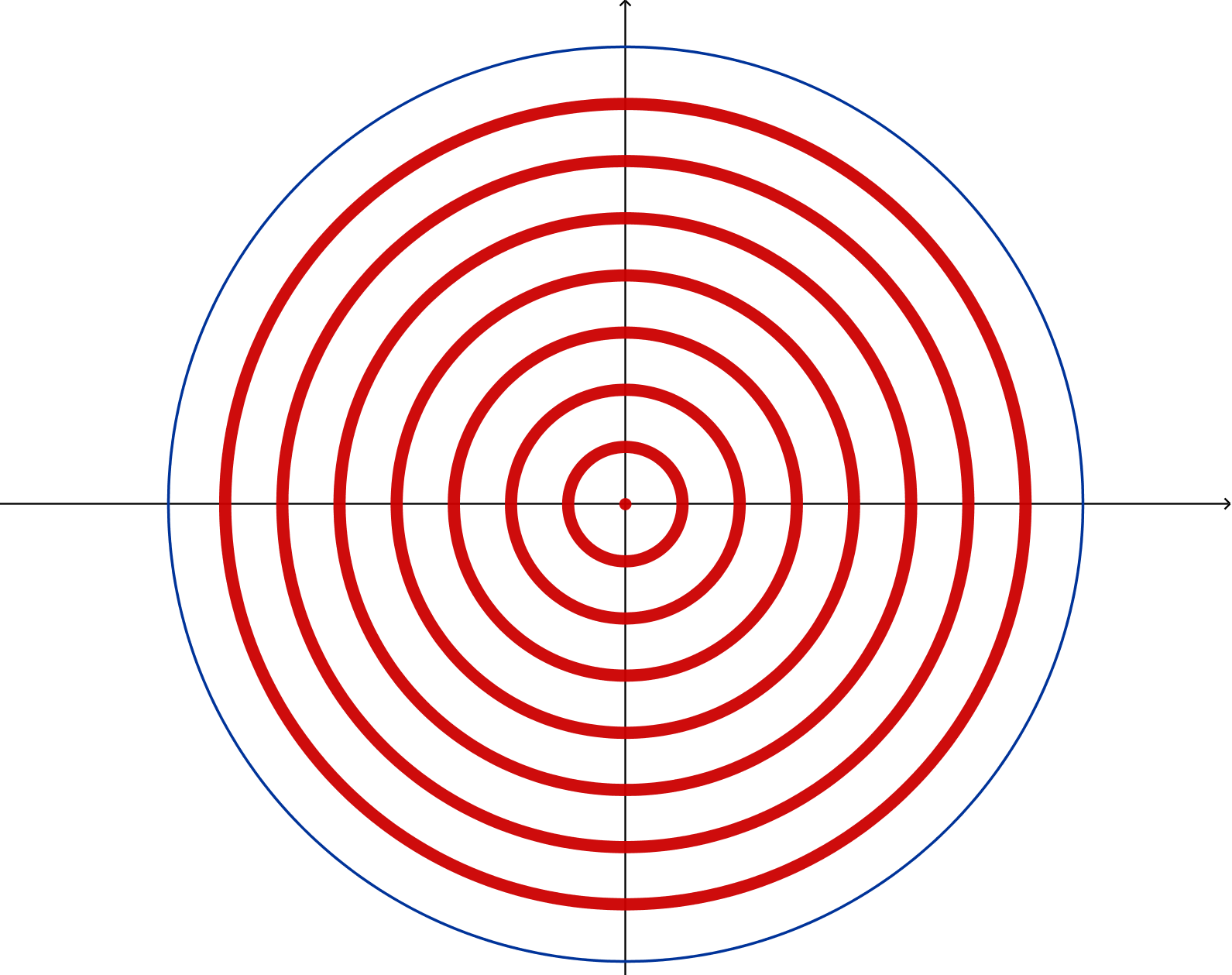}
	\caption{A $\delta$-dense set of measure $<\eps$}   
	\label{fig:x3}
	\end{minipage}
\end{figure} 
	\end{center}
  \begin{remark}\label{deltadance}
  For every $k\ge1$ and every $\eps<2^{-k-1}$, we define the sets
\[\Gamma_k^\eps:=B_\eps\cup\bigcup_{i=1}^{2^k-1}\Big\{x\in\R^n\,\big|\,\frac{i}{2^k}-\eps<|x|<\frac{i}{2^k}+\eps\Big\}
\quad\textrm{ and }\quad\Gamma_k:=\{0\}\cup\bigcup_{i=1}^{2^k-1}\partial B_\frac{i}{2^k}.\] 
%
\noindent Notice that for every $\delta>0$ there exists $\tilde k=\tilde k(\delta)$ such that for every $k\geq\tilde k$ we have
\[B_\delta(x)\cap\Gamma_k\not=\emptyset,\qquad\forall\,B_\delta(x)\subset B_1.\]
Thus, for every $k\geq\tilde k(\delta)$ and $\eps<2^{-k-1}$, the set $\Gamma_k^\eps$ is $\delta$-dense in $B_1$.
\noindent Moreover, notice that
\[\Gamma_k=\bigcap_{\eps\in(0,2^{-k-1})}\Gamma_k^\eps\quad\textrm{ and }\quad\lim_{\eps\to0^+}|\Gamma_k^\eps|=0.\]
It is also worth remarking that the sets $\Gamma_k^\eps$ have smooth boundary.
\noindent In particular, for every $\delta>0$ and every $\eps>0$ small, we can find a set $E\subset B_1$ which is $\delta$-dense in $B_1$ and whose measure is $|E|<\eps$.
This means that we can find an open set $E$ with smooth boundary, whose measure is arbitrarily small
and which is ``topologically arbitrarily dense'' in $B_1$.
\end{remark}

We introduce the following useful geometric observation. 

\begin{prop}\label{tgball}
Let $\Omega\subset \Rn$ be a bounded and connected open set with $C^2$ boundary and let $\delta\in(0,r_0)$,
for $r_0$ given in \eqref{r01}. 
If $E$ is not $\delta$-{dense} in $\Omega$ 
and $|E\cap\Omega|>0$, 
then there exists a point $q\in\partial E\cap\Omega$ such that $E$ has an exterior tangent ball at $q$ of radius $\delta$ (contained in $\Omega$),
i.e.
there exist $p\in \Co E\cap \Omega$ such that
\[ B_\delta(p)\subset\subset\Omega,\qquad q \in \partial B_{\delta}(p) \cap \partial E
\quad \mbox{ and }  \quad B_{ \delta}(p) \subset \Co E.\] 
\end{prop}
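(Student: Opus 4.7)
The plan is to slide (rather than grow) a ball of radius $\delta$ from the configuration granted by the failure of $\delta$-density and stop at the first moment the ball is forced to meet $E$: at such a stopping position the ball lies entirely in $E_{ext}$ while arbitrarily close translates of it already carry positive mass of $E$, so by compactness the boundary of the stopped ball must touch $\partial E$. To keep everything continuous I would introduce
\[F:\Omega_{-\delta}\longrightarrow [0,\infty),\qquad F(p):=|B_\delta(p)\cap E|,\]
which is continuous since $p\mapsto\chi_{B_\delta(p)}$ is continuous in $L^1(\R^n)$, and set $\mathcal A:=F^{-1}(0)\subset\Omega_{-\delta}$, a closed subset. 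By hypothesis there is $x_0\in\Omega_{-\delta}$ with $|B_\delta(x_0)\cap E|=0$, so $\mathcal A\neq\emptyset$; and by the measure theoretic convention of Subsection \ref{MEAS:ASS:SEC}, every $p\in\mathcal A$ actually satisfies $B_\delta(p)\subset E_{ext}$.

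Next I would exploit $\delta<r_0$ to extract two geometric facts. First, the $C^2$ regularity of $\bar d_\Omega$ on $N_{r_0}(\partial\Omega)$ recalled in Appendix \ref{A2} ensures that $\Omega_{-\delta}$ inherits connectedness from $\Omega$ (the gradient flow of $\bar d_\Omega$ provides a retraction onto $\overline{\Omega_{-\delta}}$ inside the tubular neighbourhood). Second, every $y\in\Omega$ lies within $\delta$ of some $p\in\Omega_{-\delta}$: for $y\in\Omega\setminus\Omega_{-\delta}$ one takes $p:=\pi(y)+r\nu(\pi(y))$ with $r\in(\delta,r_0)$ slightly larger than $\mathrm{dist}(y,\partial\Omega)$, which gives $\mathrm{dist}(p,\partial\Omega)=r>\delta$ and $|y-p|=r-\mathrm{dist}(y,\partial\Omega)<\delta$. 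Consequently $\bigcup_{p\in\Omega_{-\delta}}B_\delta(p)\supset\Omega$; were $\mathcal A$ to coincide with all of $\Omega_{-\delta}$, a countable subcover of $\Omega$ by balls $B_\delta(p)$ with $p$ in a countable dense subset of $\Omega_{-\delta}$ would then force $|E\cap\Omega|=0$, contradicting the hypothesis. Hence $\mathcal A\subsetneq\Omega_{-\delta}$, and since $\Omega_{-\delta}$ is connected while $\mathcal A$ is a non-empty proper closed subset, one can select a point $p^*\in\mathcal A$ that is also a limit of points $p_n\in\Omega_{-\delta}\setminus\mathcal A$.

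At such a $p^*$, one has $B_\delta(p^*)\subset E_{ext}$, while for each $n$ the condition $|B_\delta(p_n)\cap E|>0$ permits choosing $y_n\in B_\delta(p_n)\cap E\subset\R^n\setminus E_{ext}$. Up to subsequences $y_n\to y^*\in\overline{B_\delta(p^*)}$, and since $\R^n\setminus E_{ext}=E_{int}\cup\partial E$ is closed while the open ball $B_\delta(p^*)$ is contained in $E_{ext}$, the limit $y^*$ must lie on $\partial B_\delta(p^*)$. Moreover $y^*$ cannot be in $E_{int}$, since $E_{int}$ is open and any neighbourhood of $y^*\in\partial B_\delta(p^*)$ meets $B_\delta(p^*)\subset E_{ext}$, giving a contradiction. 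Therefore $y^*\in\partial E$, and setting $q:=y^*$ and $p:=p^*$ yields the desired exterior tangent ball; the condition $p\in\Omega_{-\delta}$ gives $\overline{B_\delta(p)}\subset\subset\Omega$, so in particular $q\in\Omega$.

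The main obstacle is the geometric input that $\Omega_{-\delta}$ is connected and that every point of $\Omega$ is within $\delta$ of $\Omega_{-\delta}$; both rely critically on $\delta<r_0$. Without that hypothesis, a thin ``pinch'' of $\Omega$ of width smaller than $2\delta$ could disconnect $\Omega_{-\delta}$ and isolate the given starting position $x_0$ from the portion of $\Omega$ containing $E$, breaking the topological boundary argument used to produce $p^*$.
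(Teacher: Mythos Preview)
Your proof is correct and follows a genuinely different route from the paper's. The paper splits into two cases according to whether $\overline E$ meets $\Omega_{-\delta'}$ for a suitable $\delta'\in(\delta,r_0]$: if yes, it connects the given empty ball to a point of $\overline E$ by a path inside $\overline{\Omega_{-\delta'}}$ and invokes an explicit sliding lemma (Lemma~\ref{slidetheballs}); if not, it enlarges the sublevel set $\Omega_{-\rho}$ until it first touches $\overline E$ and then uses the uniform interior ball condition of $\Omega_{-\tilde\rho}$ (Lemma~\ref{geomlem}) to produce the tangent ball. Your argument replaces this dichotomy by a single topological step: the continuous mass function $F(p)=|B_\delta(p)\cap E|$ on the connected set $\Omega_{-\delta}$ has a nonempty proper zero set $\mathcal A$, hence $\mathcal A$ has a relative boundary point $p^*$, and a compactness/limit argument yields the tangency. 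This is more streamlined and avoids both the case split and the interior ball condition; the only geometric inputs you need are the connectedness of $\Omega_{-\delta}$ and the covering $\bigcup_{p\in\Omega_{-\delta}}B_\delta(p)\supset\Omega$, both of which the paper's Appendix~\ref{A2} machinery provides.

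Two minor remarks. In your covering argument you write $p:=\pi(y)+r\nu(\pi(y))$; with the paper's convention that $\nu_\Omega$ is the \emph{outer} normal this point lies outside $\Omega$, so you want $p:=\pi(y)-r\nu_\Omega(\pi(y))$ (and then indeed $|y-p|=r-d(y,\partial\Omega)<\delta$ for $r\in(\delta,\min\{r_0,\,\delta+d(y,\partial\Omega)\})$, which is nonempty since $y\in\Omega$). Second, Proposition~\ref{retract} literally gives path-connectedness of $\overline{\Omega_{-\delta}}$, not of $\Omega_{-\delta}$; you can either observe, via the $C^2$ regularity of $\partial\Omega_{-\delta}$ (Remark~\ref{c21}), that distinct components of $\Omega_{-\delta}$ have disjoint closures, or simply run your entire argument with $F$ defined on the connected set $\overline{\Omega_{-\delta}}$, which works verbatim.
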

\begin{proof}
Using Definition \ref{wild}, we have that there exists  $x\in \Omega$ for which $B_\delta(x)\subset\subset \Omega$ and $|B_\delta(x)\cap E|=0$, so $B_\delta(x)\subset E_{ext}$. If $B_\delta(x)$ is tangent to $\partial E$ then we are done.

Notice that
\[B_\delta(x)\subset\subset\Omega\quad\Longrightarrow\quad d(x,\partial\Omega)>\delta,\]
and let
\[\delta':=\min\{r_0,d(x,\partial\Omega)\}\in(\delta,r_0].\]
Now we consider the open set $\Omega_{-\delta'}\subset \Omega$
\[ \Omega_{-\delta'} :=\{ \bar d_{\Omega}<-\delta'\},\]  so $x\in \Omega_{-\delta'}$. According to Remark \ref{c21} and Lemma \ref{geomlem} we have that $\Omega_{-\delta'}$ has $C^2$ boundary and that 
\eqlab{\label{unifball} \Omega_{-\delta'} \mbox{ satisfies the uniform interior ball condition of radius at least } r_0.} 
 
 We have two possibilities:
 \eqlab{\label{inside} &\mbox{ i) } && \overline  E\cap \Omega_{-\delta'}\neq \emptyset \\ 
 				  &\mbox{ ii) }&&\emptyset \neq  \overline{E}\cap\Omega \subset \Omega \setminus \Omega_{-\delta'}.} 
 
 If i) happens, we pick any point $y\in \overline{E}\cap \Omega_{-\delta'}$.  
The set $\overline{\Omega_{-\delta'}}$ is path connected (see Proposition \ref{retract}), so there exists a path $c:[0,1]\longrightarrow
\R^n$ that connects $x$ to $y$ and that stays inside $\overline{\Omega_{-\delta'}}$, that is
\[c(0)=x,\qquad c(1)=y\quad\textrm{ and }\quad c(t)\in\overline{\Omega_{-\delta'}},\quad\forall\,t\in[0,1].\]
Moreover, since $\delta<\delta'$, we have
\[B_\delta\big(c(t)\big)\subset\subset\Omega\qquad\forall\,t\in[0,1].\]
Hence,
we can ``slide the ball'' $B_{\delta}(x)$ along the path and we obtain the desired
claim thanks to Lemma \ref{slidetheballs}.

%

Now, if we are in the case ii) of \eqref{inside}, then $\Omega_{-\delta'}\subset E_{ext}$, so we dilate $\Omega_{-\delta'}$ until we first touch 
$\overline E$. That is, we consider 
\[\tilde \rho:=\inf\{\rho\in[0,\delta'] \; \big| \; \Omega_{-\rho}\subset E_{ext}\}.\]
Notice that by hypothesis $\tilde \rho>0$. Then 
\[\overline{\Omega_{-\tilde \rho}} \subset \overline {E_{ext}}=E_{ext}\cup\partial E. \]
If
\[ \partial \Omega_{-\tilde\rho} \cap \partial E=\emptyset\quad  \mbox { then }\quad  \overline{ \Omega_{-\tilde \rho}} \subset  E_{ext},\] 
hence we have that
\[ d= d\left(  \overline E\cap \Omega\setminus \Omega_{-\delta'}, \overline {\Omega_{-\tilde \rho}}\right)\in(0,\tilde \rho),\]
therefore
\[ \Omega_{-\tilde \rho} \subset \Omega_{-(\tilde \rho-d)}\subset E_{ext}.\]
This is in contradiction with the definition of $\tilde \rho$. Hence, 
 there exists $q \in \partial \Omega_{-\tilde \rho} \cap \partial E$. 

Recall that, by definition of $\tilde\rho$, we have
$\Omega_{-\tilde \rho} \subset \Co E.$ Thanks to \eqref{unifball}, there exists a tangent ball at $q$ interior to $\Omega_{-\tilde \rho}$, hence a tangent ball at $q$ exterior to $E$, of radius at least $r_0>\delta$. 
This concludes the proof of the lemma.
\end{proof}

We observe that part $(A)$ of Theorem \ref{THM}
is essentially a consequence of Theorem \ref{positivecurvature}.
Indeed, if an $s$-minimal set $E$ is not $\delta_s$-dense and it is not empty in $\Omega$, then
by Proposition \ref{tgball} we can find a point $q\in\partial E\cap\Omega$ at which $E$ has an exterior tangent ball of radius $\delta_s$. Then Theorem \ref{positivecurvature} implies that the $s$-fractional mean curvature of $E$ in $q$ is strictly positive,
contradicting the Euler-Lagrange equation.

On the other hand, part $(B)$ of Theorem \ref{THM} follows from a careful asymptotic use
of the density estimates provided by Theorem \ref{notfull}.
For the reader's facility, we also recall that $r_0$ 
has the same meaning here and across the paper, 
as clarified in Appendix \ref{A2}.
We now proceed with the precise arguments of the proof.

\begin{proof}[Proof of Theorem \ref{THM}]
We begin by proving part $(A)$.\\
First of all, since $\delta_s\to0^+$, we can find $s_1=s_1(E_0,\Omega)\in(0,s_0]$ such that $\delta_s<r_0$
for every $s\in(0,s_1)$.

Now let $s\in(0,s_1)$ and let $E$ be $s$-minimal in $\Omega$, with exterior data $E_0$.

We suppose that  $E\cap \Omega\neq \emptyset$ and prove that $E$ has to be $\delta_s$-dense. 

Suppose by contradiction that $E$ is not $\delta_s$-dense. Then, in view of 
Proposition \ref{tgball}, there exists $p\in \Co E\cap \Omega$ such that
\[ q \in \partial B_{\delta_s}(p) \cap (\partial E\cap\Omega) \quad \mbox{ and }  \quad B_{ \delta_s}(p) \subset \Co E.\] 
Hence we use the Euler-Lagrange theorem at $q$, i.e.
\[\I_s[E](q) \leq 0,\] 
to obtain a contradiction with Theorem \ref{positivecurvature}. This says that $E$ is not $\delta_s$-dense and concludes the proof of part $(A)$ of Theorem \ref{THM}.

Now we prove the part $(B)$ of the Theorem.\\
Suppose that point $(B.1)$ does not hold true. Then we can find a sequence $s_k\searrow0$ and a sequence of sets $E_k$
such that each $E_k$ is $s_k$-minimal in $\Omega$ with exterior data $E_0$ and
\[E_k\cap\Omega\not=\emptyset.\]
We can assume that $s_k<s_1(E_0,\Omega)$ for every $k$. Then part $(A)$ implies that each $E_k$ is $\delta_{s_k}$-dense,
that is
\[|E_k\cap B_{\delta_{s_k}}(x)|>0\quad\forall\,B_{\delta_{s_k}}(x)\subset\subset\Omega.\]
Fix $\gamma=\frac{1}{2}$, take a sequence $\delta_h\searrow0$ and let $\sigma_{\delta_h,\frac{1}{2}}$ be as in Theorem \ref{notfull}. Recall that $\delta_s\searrow0$ as $s\searrow0$.
Thus for every $h$ we can find $k_h$ big enough such that
\eqlab{\label{koala}s_{k_h}<\sigma_{\delta_h,\frac{1}{2}}\qquad\textrm{and}\qquad\delta_{s_{k_h}}<\delta_h.}
In particular, this implies
\eqlab{\label{koala1}|E_{k_h}\cap B_{\delta_h}(x)|\ge|E_k\cap B_{\delta_{s_{k_h}}}(x)|
>0\quad\forall\,B_{\delta_h}(x)\subset\subset\Omega,}
for every $h$. On the other hand, by \eqref{koala} and Theorem \ref{notfull}, we also have that
\eqlab{\label{koala2}|\Co E_{k_h}\cap B_{\delta_h}(x)|>0\quad\forall\,B_{\delta_h}(x)\subset\subset\Omega.}
This concludes the proof of part $(B)$. Indeed, notice that since $B_{\delta_h}(x)$ is connected, \eqref{koala1} and \eqref{koala2}
together imply that
\[\partial E_{k_h}\cap B_{\delta_h}(x)\neq\emptyset\quad\forall\,B_{\delta_h}(x)\subset\subset\Omega.\]
\end{proof}

\subsection{Stickiness to the boundary is a typical behavior (Theorem \ref{boundedset})}\label{sticky}

Now we show that the ``typical behavior''
of the nonlocal minimal surfaces
is to stick at the boundary whenever
they are allowed to do it,
in 
the precise
sense given 
by Theorem \ref{boundedset}.

\begin{proof}[Proof of Theorem \ref{boundedset}]
Let
\[\delta:=\frac{1}{2}\min\{r_0,R\},\]
and notice that (see Remark \ref{ext_unif_omega})
\[B_\delta(x_0+\delta\nu_\Omega(x_0))\subset B_R(x_0)\setminus\Omega\subset\Co E_0.\]
Since $\delta_s\to0^+$, we can find $s_3=s_3(E_0,\Omega)\in(0,s_0]$ such that $\delta_s<\delta$ for every $s\in(0,s_3)$.

Now let $s\in(0,s_3)$ and let $E$ be $s$-minimal in $\Omega$, with exterior data $E_0$.

We claim that
\eqlab{\label{pf_bdedset_eq1}B_\delta(x_0-r_0\nu_\Omega(x_0))\subset E_{ext}.}
We observe that this is indeed a crucial step to
prove Theorem \ref{boundedset}. Indeed, once this is established,
by Remark \ref{ext_unif_omega} we obtain that
\[B_\delta(x_0-r_0\nu_\Omega(x_0))\subset\subset\Omega.\]
Hence, since $\delta_s<\delta$, we deduce from \eqref{pf_bdedset_eq1}
that $E$ is not $\delta_s$-dense.
Thus, since $s<s_3\leq s_1$, Theorem \ref{THM} 
implies that $E\cap\Omega=\emptyset$, which concludes the proof of Theorem \ref{boundedset}.

This, we are left to prove \eqref{pf_bdedset_eq1}. Suppose by contradiction that
\[\overline{E}\cap B_\delta(x_0-r_0\nu_\Omega(x_0))\not=\emptyset,\]
and consider the segment $c:[0,1]\longrightarrow\Rn$,
\[c(t):=x_0+\big((1-t)\delta-t\,r_0\big)\nu_\Omega(x_0).\]
Notice that
\[B_\delta\big(c(0)\big)\subset E_{ext}\quad\mbox{ and }\quad B_\delta\big(c(1)\big)\cap\overline{E}\not=\emptyset,\]
so
\[t_0:=\sup\Big\{\tau\in[0,1]\,\big|\,\bigcup_{t\in[0,\tau]}B_\delta\big(c(t)\big)\subset E_{ext}\Big\}<1.\]
Arguing as in Lemma \ref{slidetheballs}, we conclude that
\[B_\delta\big(c(t_0)\big)\subset E_{ext}\quad\mbox{ and }\quad \exists\,q\in\partial B_\delta\big(c(t_0)\big)\cap\partial E.\]
By definition of $c$, we have that either $q\in\Omega$ or
\[q\in\partial\Omega\cap B_R(x_0).\]
In both cases (see Theorem 5.1 in \cite{nms} and Theorem \eqref{EL_boundary_coroll}) we have
\[\I_s[E](q)\leq0,\]
which gives a contradiction with Theorem \ref{positivecurvature} and concludes the proof.
\end{proof}

 
   \section{The contribution from infinity of some supergraphs} \label{sectexamples} We compute  in this Subsection the contribution from infinity of some particular supergraphs.
 \begin{example}[The cone] \label{THECONE} Let $ S \subset  \mathbb S^{n-1}$ be a portion of the unit sphere, $\mathfrak o:=\Ha^{n-1}(S)$ and
  \[ C:=\{ t\sigma  \; \big| \; t\geq 0, \;\sigma\in  S)\}.\] 
  Then the contribution from infinity is given by the opening of the cone,
  \eqlab{\label{cony} \alpha(C) = \mathfrak o.}
  Indeed,
  \[\alpha_s(0,1,C)= \int_{\Co B_1} \frac{\chi_C(y)}{|y|^{n+s}}\, dy = \Ha^{n-1}(S) \int_1^{\infty} t^{-s-1}\, dt=
  \frac{\mathfrak o}s,  \]
and we obtain the claim by passing to the limit. Notice that this says in particular that the contribution from infinity of a half-space is $\omega_n/2$.
  \end{example} 
  
  \begin{center}
\begin{figure}[htpb]
	\hspace{0.99cm}
	\begin{minipage}[b]{0.99\linewidth}
	\centering
	\includegraphics[width=0.99\textwidth]{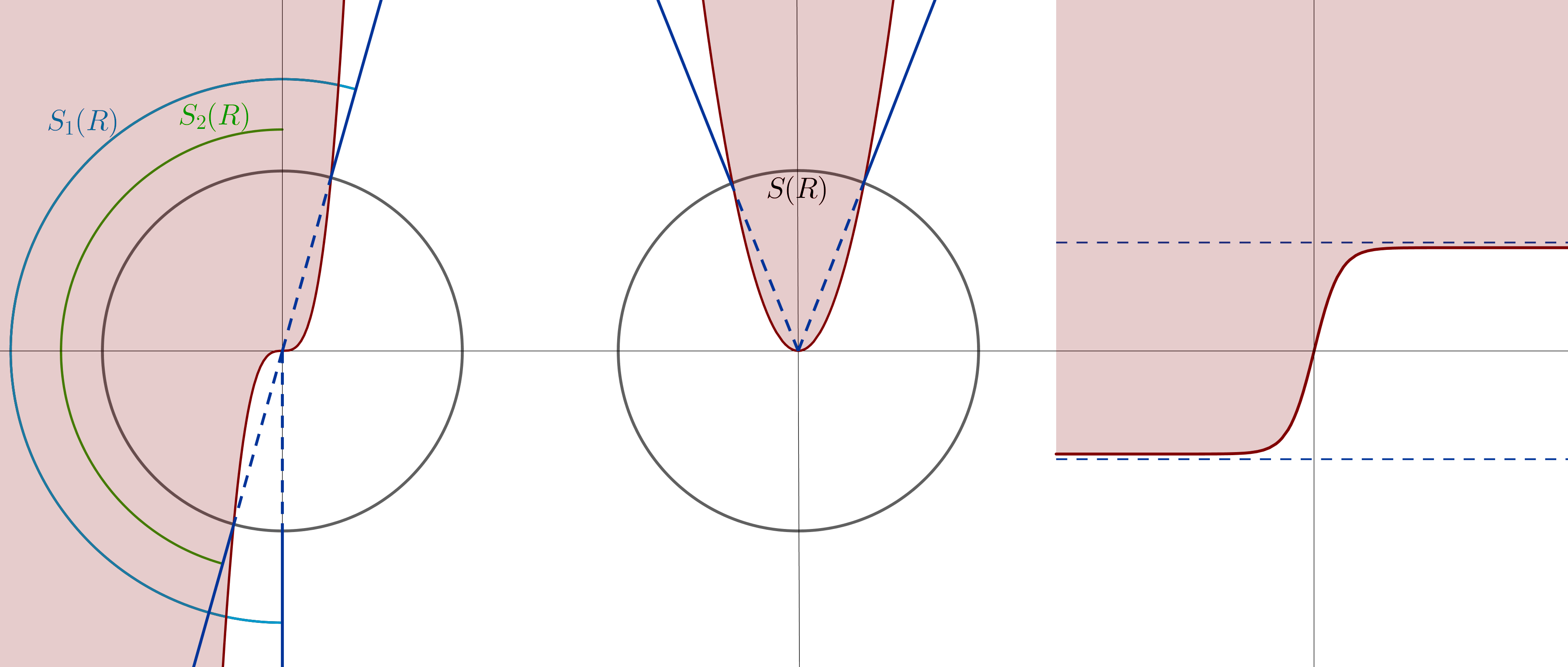}
	\caption{The contribution from infinity of $x^3$, $x^2$ and $\tanh x$}   
	\label{x3}
	\end{minipage}
\end{figure} 
	\end{center}
	  \begin{example}[The parabola]
  We consider the supergraph 
  \[ E:=\{ (x',x_n) \; \big| \; x_n\geq |x'|^2\},\] and we show that, in this case, \[ \alpha(E)=0.\]
  In order to see this, we take any $R>0$, intersect the ball $B_R$ with the parabola and build a cone on this intersection (see the second picture in Figure \ref{x3}), i.e. we define
  \[ S(R):=\partial B_R \cap E , \quad \quad C_R =\{ t\sigma  \; \big| \; t\geq 0, \;\sigma\in  S(R)\}.\] 
  We can explicitly compute the opening of this cone, that is
  \[ \mathfrak o (R)= \Bigg( \arcsin \frac{\left(\sqrt{4R^2+1}-1\right)^{1/2}}{R\, \sqrt 2 }\Bigg)\frac{\omega_n}{\pi} .\]
  Since $E\subset C_R$ outside of $B_R$, thanks to the monotonicity property in Proposition \ref{subsetssmin} and to \eqref{cony}, we have that
  \[  \overline \alpha (E) \leq \overline \alpha(C_R)  =\mathfrak o(R).\]
  Sending $R\to \infty$, we find that
\[  \overline \alpha (E) =0, \quad \mbox{ thus } \quad \alpha(E)=0.\]
  \end{example}
  More generally, if we consider for any given $c, \eps>0$ a function $u$ such that
  \[ u(x')>c|x'|^{1+\eps}, \quad \mbox{ for  any }|x'|>R \mbox{ for  some } R>0\]
  and\[   E:=\{ (x',x_n) \; \big| \; x_n\geq u(x')\},\]
  then
\[ \alpha(E)=0.\]
On the other hand, if we consider a function that is not rotation invariant, things can go differently, as we see in the next example. 
  
   \begin{example}[The supergraph of $x^3$]
 We consider the supergraph 
  \[ E:=\{ (x,y) \; \big| \; y\geq x^3\}.\] 
In this case, we show that \[\alpha(E) = \pi .\]
For this, given $R>0$, we intersect  $\partial B_R$ with $E$  and denote by $S_1(R)$ and $S_2(R)$ the arcs on the circle as the first picture in Figure \ref{x3}. We consider
the cones\[ C^1_R :=\{ t\sigma  \; \big| \; t\geq 0, \;\sigma\in  S_1(R)\}\, \quad  C^2_R :=\{ t\sigma  \; \big| \; t\geq 0, \;\sigma\in  S_2(R)\} \] and
notice that outside of $B_R$, it holds that $C_R^2\subset E\subset C_R^1$. 
Let  $\overline x_R $ be the solution of \[ x^6+x^2=R^2,\] that is the $x$-coordinate in absolute value of the intersection points $\partial B_R \cap \partial E$. Since $f(x)=x^6+x^2$ is increasing on $(0,\infty)$ and $ R^2=f(\overline x_R)<f(R^{1/3}),$ we have that  $\overline x_R< R^{1/3}$. Hence  
\[ \mathfrak o^1(R)= \pi + \arcsin \frac{\overline x_R}R  \leq \pi+  \arcsin \frac{R^{1/3}}R  ,\quad   \mathfrak o^2(R)  \geq \pi- \arcsin \frac{R^{1/3}}R .\] 
Thanks to the monotonicity property in Proposition \ref{subsetssmin} and to \eqref{cony} we have that
  \[  \overline \alpha (E) \leq\alpha(C_R^1)=  \mathfrak o^1(R), \quad \underline \alpha(E) \geq \alpha(C_R^2)= \mathfrak o^2(R)  \]
  and sending  $R\to \infty$ we obtain that
\[ \overline \alpha(E) \leq  \pi , \quad \underline \alpha(E) \geq \pi.\] Thus $\alpha(E)$ exists and we obtain the desired conclusion.
 \end{example}

 \begin{example}[The supergraph of a bounded function]\label{tanh}
 We consider the supergraph 
  \[ E:=\{ (x',x_n) \; \big| \; x_n\geq u(x') \}, \quad \quad{\mbox{with}}
\quad \quad \|u\|_{L^\infty(\Rn)} <M.\] 
We show that, in this case,
\[\alpha(E) = \frac{\omega_n}2 .\]
To this aim, let \bgs{ &\mathfrak{P}_1:=\{ (x',x_n)\; \big| \; x_n>M\} \\
	&\mathfrak{P}_2:=\{ (x',x_n)\; \big| \; x_n<-M\}.}
	We have that 
 \[   \mathfrak{P}_1 \subset E , \quad \quad \mathfrak{P}_2 \subset \Co E . \] 
  Hence by Proposition \ref{subsetssmin}
  \[ \underline \alpha(E) \geq \overline \alpha (\mathfrak{P}_1)=\frac{\omega_n}2,\quad  \quad  
  \underline \alpha(\Co E)\geq \overline\alpha (\mathfrak{P}_2)=\frac{\omega_n}2 .\]
  Since $\underline \alpha(CE) =\omega_n - \overline \alpha(E)$ we find that
   \[ \overline \alpha(E)\leq \frac{\omega_n}2,\]
  thus the conclusion. An example of this type is depicted in Figure \ref{x3} (more generally, the result holds for the supergraph in $\Rn$  
 $ \{ (x',x_n) \; \big| \; x_n\geq \tanh x_1\}$).
 \end{example}

 \begin{example}[The supergraph of a sublinear graph]\label{candygr}
   More generally, we can take the supergraph of a function that grows sublinearly at infinity, i.e.  
  \[  E:=\{(x',x_n)\;\big|\; x_n>u(x')\}, \qquad{\mbox{with}}\qquad 
\lim_{|x'|\to +\infty} \frac{|u(x')|}{|x'|}=0 .\]
In this case, we show that \[ \alpha(E)= \frac{\omega_n}2.\]
Indeed, for any $\eps>0$ we have that there exists $R=R(\eps)>0$ such that
\[ |u(x')|<\eps |x'|, \quad \forall \;|x'|>R.\]
We denote
\[ S_1(R):= \partial B_R \cap \{(x',x_n)\;\big|\; x_n>\eps|x'|\}, \qquad S_2(R):= \partial B_R \cap \{(x',x_n)\;\big|\; x_n<-\eps|x'|\}\]
and
\[ C_R^i=\{ t\sigma\; \big| \; t\geq 0, \; \sigma \in S_i(R)\}, \quad \mbox{for }\; i=1,2.\]
We have that outside of $B_R$
\[ C_R^1\subset E, \qquad C_R^2\subset \Co E ,\] and 
\[ \alpha (C_R^1)= \alpha (C_R^2)= \frac{ \omega_n}{\pi} \left( \frac{\pi}2- \arctan \eps \right).\]
We use Proposition \ref{subsetssmin}, (i), and letting $\eps$ go to zero, we obtain that $\alpha(E)$ exists and 
\[ \alpha (E)= \frac{\omega_n}2.\]
 A particular example of this type is given by
 \[ E:=\{(x',x_n)\;\big|\; x_n>c|x'|^{1-\eps} \}, \quad \mbox{ when } |x'|>R\; \;  \mbox{ for some } \eps\in(0,1],\,c\in\R,\, R>0     .\]
 \end{example}
 In particular using the additivity property in Proposition \ref{subsetssmin} we can compute $\alpha$ for sets that lie between two graphs. 

  \begin{center}
\begin{figure}[htpb]
	\hspace{0.89cm}
	\begin{minipage}[b]{0.89\linewidth}
	\centering
	\includegraphics[width=0.89\textwidth]{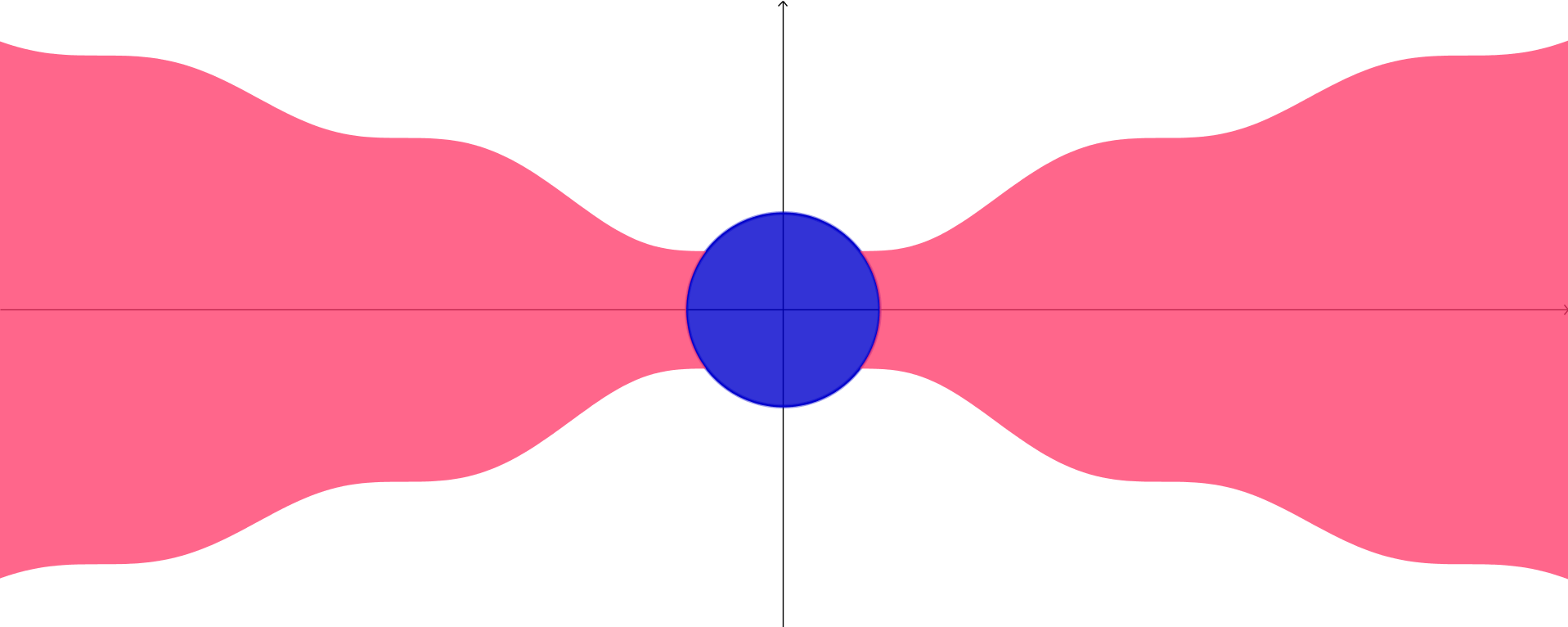}
	\caption{The ``butterscotch hard candy'' graph}   
	\label{candy_pic}
	\end{minipage}
\end{figure} 
	\end{center}
	 \begin{example}[The ``butterscotch hard candy'']
Let~$E\subset\R^n$ be such that
\[ E\cap\{|x'|>R\}\subset
\{(x',x_n)\;\big|\; |x'|>R\;,\;\,|x_n|<c|x'|^{1-\eps} \},   \;\; 
\; \;  \mbox{ for some } \eps\in(0,1],\,c>0,\,  R>0     ,\]
(an example of such a set $E$ is given in Figure \ref{candy_pic}).
In this case, we have that \[\alpha(E)=0.\]
Indeed, we can write $E_1:=E\cap\{|x'|>R\}$ and $E_2:=
E\cap\{|x'|\le R\}$. Then,
using
 the computations in Example \ref{candygr}, we have by
the monotonicity and the additivity properties in Proposition \ref{subsetssmin} that
\[ \overline{\alpha}(E_1) 
\le \alpha\big(\{x_n>-c|x'|^{1-\eps}\}\big)-\alpha\big(\{x_n>c|x'|^{1-\eps}\}\big)=0.\]
Moreover, $E_2$ lies inside $\{|x_1|\le R\}$. Hence, again by Proposition \ref{subsetssmin} and by Example \ref{THECONE},
we find
\[\overline{\alpha}(E_2)\le\alpha\big(\{|x_1|\le R\}\big)=\alpha\big(\{x_1\le R\}\big)-\alpha\big(\{x_1<-R\}\big)=0.\]
Consequently, using again the additivity property in Proposition \ref{subsetssmin}, we obtain that
$$ \overline\alpha(E)\le \overline\alpha(E_1)+\overline\alpha(E_2)
=0,$$
that is the desired result.
\end{example}
We can also compute $\alpha$ for sets that have different growth ratios in different directions. For this, we have the following example.
   \begin{example}[The supergraph of a superlinear function on a small cone]
  We consider a set lying in the half-space, deprived of a set that grows linearly at infinity.  We denote by $\tilde S$ the portion of the sphere given by 
 \bgs{ \tilde{S}:=\Big\{\sigma \in \mathbb S^{n-2}\,\Big|\,\sigma=(&\cos\sigma_1, \sin \sigma_1\cos\sigma_2,\dots, \sin \sigma_1\dots\sin \sigma_{n-2}),\\ &  \mbox{ with } \sigma_i\in \lr{\frac{\pi}2-\bar \eps, \frac{\pi}2+\bar \eps},\;  i=1,\dots,n-2\Big\} ,} where $\overline \eps \in (0,{\pi}/{2})$.
 For $x_0\in \Rn$ and $k>0$ we define  the supergraph $E\subset \Rn$ as
\bgs{\label{sigma}  E:=\big\{ (x',x_n)\in \Rn \; &\big| \; x_n\geq u(x')\big \} \quad   \mbox{ where }  \quad
 u(x')=\alig{ & \, k|x'-x_0'| &\mbox{ for } &x'\in X, \\
		 & \, 0 &\mbox{ for } &x'\notin X,  }\\
	  &X= \{ x'   \in \R^{n-1}  \mbox{ s.t. }x' = t\sigma +x_0',\,  \sigma \in  \tilde S\}.   }
We remark that $X\subset \{x_n=0\}$ is the cone ``generated'' by $\tilde S$ 
and centered at $x_0$.	 Then
\eqlab{\label{alphasigma} \alpha(E) = \frac{\omega_n}2 -  \mathcal{H}^{n-2}(\tilde S)  \int_{0}^k \frac{dt}{(1+t^2)^{\frac{n}2}}.}
Let 
\[ \mathfrak{P}_+
:=\{ (x',x_n) \; \big| \; x_n>0\}, \quad \quad \mathfrak{P}_-:=\{ (x',x_n) \; \big| \; x_n<0\}\] and we consider
the subgraph
\[ F:= \big\{ (x',x_n)\; \big| \; 0<x_n< u(x')\big \}.\]
Then \[ E \cup F=\mathfrak P_+, \quad \quad \mathfrak P_-\cup F = \Co E.\] 
Using the additivity property in Proposition \ref{subsetssmin}, we see that
\eqlab{\label{ah123}  \overline \alpha(E)\geq \frac{\omega_n}2-\overline \alpha(F), \quad \quad  \omega_n - \underline \alpha(E) = \overline \alpha(\Co E)\leq \frac{\omega_n}2 +\overline \alpha(F).} 
Let $R>0$ be arbitrary. We get that
\[\alpha_s(x_0,R,F) \leq  \int_{\left(B'_R(x'_0)\times \R\right)\cap \Co B_R(x_0)}\frac{\chi_{F}(y)}{|y-x_0|^{n+s}}\, dy + \int_{\Co \left( B'_R(x'_0)\times \R\right)}\frac{\chi_{F}(y)}{|y-x_0|^{n+s}}\, dy\]
so
\eqlab{\label{i123}  \alpha_s(x_0,R,F) \leq &\;  \int_{B'_R(x'_0)}\frac{dy'}{|y'-x_0'|^{n-1+s}}\int_{\frac{\sqrt{R^2-|y'-x_0'|^2}}{|y'-x_0'|}}^{\infty} \frac{dt}{(1+t^2)^{\frac{n+s}2}}
   \\ &\; + 
   \int_{\Co B'_R(x'_0)\cap X}\frac{dy'}{|y'-x_0'|^{n-1+s}}\int_{0}^{k} \frac{dt}{(1+t^2)^{\frac{n+s}2}}\\=&\; I_1+I_2.} 
   Using that $1+t^2\geq \max\{1,t^2\}$ and passing to polar coordinates, we obtain that
   \bgs{ I_1= &\; \int_{B'_R(x_0')}\frac{dy'}{|y'-x_0'|^{n-1+s}}\bigg(\int_{\frac{\sqrt{R^2-|y'-x_0'|^2}}{|y'-x_0'|}}^{\frac{R}{|y'-x_0'|}} \frac{dt}{(1+t^2)^{\frac{n+s}2}} + \int_{\frac{R}{|y'-x_0'|}}^{\infty}\frac{dt}{(1+t^2)^{\frac{n+s}2}}\bigg)\\
   \leq&\; \omega_{n-1} \bigg( \int_0^R \tau^{-s-2}   \left(R- \sqrt{R^2-\rho^2}\right)\, d \rho  +  \frac{R^{-n-s+1}}{n+s-1} \int_0^R \rho^{n-2}\, d \rho    \bigg)\\
   =&\;   \omega_{n-1} \bigg( R^{-s} \int_0^1 \tau^{-s-2}   \left(1- \sqrt{1-\tau^2}\right)\, d \tau  +  \frac{R^{-s}}{(n+s-1)(n-1)}  \bigg)  .}
   Also, for any $\tau \in (0,1)$ we have that
\[1 -  \sqrt{1-\tau^2 }\leq c \tau^2,\] for some positive constant $c$, independent on $n,s$.
Therefore 
\bgs{ I_1\leq  \frac{ c \omega_{n-1} R^{-s} }{1-s} + \frac{ \omega_{n-1} R^{-s} }{(n-1)(n+s-1)} .}
Moreover, 
\[ I_2 =   \mathcal{H}^{n-2}(\tilde S) \frac{R^{-s}}s \int_{0}^k \frac{dt}{(1+t^2)^{\frac{n+s}2}}.\]
So
passing to limsup and liminf as $s\to 0^+$ in \eqref{i123} and using Fatou's lemma we obtain that
\[\overline \alpha(F)\leq   \mathcal{H}^{n-2}(\tilde S)\int_{0}^k \frac{dt}{(1+t^2)^{\frac{n}2}},\quad \quad \underline \alpha(F) \geq  \mathcal{H}^{n-2}(\tilde S)\int_{0}^k \frac{dt}{(1+t^2)^{\frac{n}2}}.\] In particular $\alpha(F)$ exists, and from \eqref{ah123} we get that 
\[ \frac{\omega_n}2 -\alpha(F) \leq \underline \alpha(E)\leq \overline \alpha(E)\leq\frac{\omega_n}2 -\alpha(F).\] 
Therefore, $\alpha(E)$ exists and 
\[  \alpha(E) =  \frac{\omega_n}2 -  \mathcal{H}^{n-2}(\tilde S)  \int_{0}^k \frac{dt}{(1+t^2)^{\frac{n}2}} .\] 
 \end{example}

\section{Continuity of the fractional mean curvature and a sign changing
property \\of the nonlocal mean curvature}\label{cont}

We use a formula proved in \cite{regularity} to show that the $s$-fractional mean curvature is continuous
with respect to $C^{1,\alpha}$ convergence of sets, for any $s<\alpha$ and with respect to $C^2$ convergence of sets, for $s$ close to 1.

By $C^{1,\alpha}$ convergence of sets we mean that our sets locally converge in measure and can locally be described as the supergraphs
of functions which converge in $C^{1,\alpha}$. 

\begin{defn}\label{convofsets}
Let $E\subset\R^n$ and let $q\in\partial E$ such that $\partial E$ is $C^{1,\alpha}$ near $q$, for some $\alpha\in(0,1]$. We say that the sequence $E_k\subset\R^n$ converges to $E$ in a $C^{1,\alpha}$ sense (and write $E_k\xrightarrow{C^{1,\alpha}}E$) in a neighborhood of $q$ if:\\
(i)  the sets $E_k$ locally converge in measure to $E$, i.e.
\[|(E_k\Delta E)\cap B_r|\xrightarrow{k\to\infty}0 \quad \mbox{ for any } r>0\]
and \\
(ii) the boundaries $\partial E_k$ converge to $\partial E$ in $C^{1,\alpha}$ sense in a neighborhood of $q$.\\
We define in a similar way the $C^2$ convergence of sets.
\end{defn}

More precisely, 
we denote  
\[Q_{r,h}(x):=B'_r(x')\times(x_n-h,x_n+h),\]
for $x\in\R^n$, $r,h>0$. If $x=0$, we drop it in formulas and simply write $Q_{r,h}:=Q_{r,h}(0)$. Notice that up to a translation and a rotation, we can suppose that $q=0$ and
\eqlab{\label{opossum1}E\cap Q_{2r,2h}=\{(x',x_n)\in\R^n\,|\,x'\in B'_{2r},\,u(x')<x_n<2h\},}
for some $r,h>0$ small enough and $u\in C^{1,\alpha}(\overline B'_{2r})$ such that $u(0)=0$. 
Then, point $(ii)$ means that we can write
\eqlab{\label{opossum}E_k\cap Q_{2r,2h}=\{(x',x_n)\in\R^n\,|\,x'\in B'_{2r},\,u_k(x')<x_n<2h\},}
for some functions $u_k\in C^{1,\alpha}(\overline B_{2r}')$ such that
\eqlab{\label{norm_graph_conv}\lim_{k\to\infty}\|u_k-u\|_{C^{1,\alpha}(\overline B'_{2r})}=0.}
\smallskip
We remark that, by the continuity of $u$,
up to considering a smaller $r$, we can suppose that
\eqlab{\label{bded_graph_hp}|u(x')|<\frac{h}{2},\qquad\forall\,x'\in B'_{2r}.}

We have the following result.

\begin{theorem}\label{everything_converges}
Let $E_k\xrightarrow{C^{1,\alpha}}E$ in a neighborhood of $q\in \partial E$. Let $q_k\in\partial E_k$ be such that $
q_k\longrightarrow q$ and let $s,s_k\in(0,\alpha)$ be such that $s_k\xrightarrow{k\to\infty} s$.
Then
\[\lim_{k\to\infty}\I_{s_k}[E_k](q_k)=\I_s[E](q).\]

Let $E_k\xrightarrow{C^2}E$ in a neighborhood of $q\in \partial E$. Let $q_k\in\partial E_k$ be such that $q_k\longrightarrow q$ and let $s_k\in(0,1)$ be such that $s_k\xrightarrow{k\to\infty} 1$. Then
\[\lim_{k\to\infty}(1-s_k)\I_{s_k}[E_k](q_k)=\omega_{n-1}H[E](q).\]
%
\end{theorem}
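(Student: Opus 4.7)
The plan is to reduce the problem to a graph configuration via a rigid motion, split the integral into a near and a far part, handle the far part by dominated convergence, and treat the near part by an explicit graph formula from \cite{regularity} that converts the principal value into an absolutely convergent integral via an odd--symmetrization trick.

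After a rigid motion we may assume $q=0$ and that the tangent hyperplane to $\partial E$ at $0$ is $\{x_n=0\}$; by Definition~\ref{convofsets} and \eqref{opossum}--\eqref{norm_graph_conv}, for $k$ large we have $E\cap Q_{2r,2h}=\{x_n>u(x')\}$ and $E_k\cap Q_{2r,2h}=\{x_n>u_k(x')\}$ with $u_k\to u$ in $C^{1,\alpha}(\overline{B'_{2r}})$ (respectively $C^2$), $u(0)=0$, $\nabla u(0)=0$, and $q_k=(q_k',u_k(q_k'))\to 0$. Shrinking $r$ if needed, I may arrange $|u_k(y')-u_k(q_k')|<h$ on $B'_r(q_k')$, uniformly in $k$. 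I split
\[
\I_{s_k}[E_k](q_k) = N_k + F_k,\qquad N_k := \int_{Q_{r,h}(q_k)}\frac{\chi_{\Co E_k}-\chi_{E_k}}{|y-q_k|^{n+s_k}}\,dy,
\]
with $F_k$ the integral over $\Rn\setminus Q_{r,h}(q_k)$. For $F_k$, since $q_k\to 0$, for $k$ large every $y$ in the domain of integration satisfies $|y-q_k|\ge c:=\tfrac{1}{2}\min\{r,h\}$, so the integrand is dominated by the $k$--independent integrable function $\chi_{\{|y-q|\ge c/2\}}|y-q|^{-n-s_0}$ for some $s_0\in(0,\inf_k s_k)$; combined with $\chi_{E_k}\to \chi_E$ locally in measure, dominated convergence gives $F_k\to F$. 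When $s_k\to 1$ one has $|F_k|\le C$ uniformly, so $(1-s_k)F_k\to 0$.

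The core is the treatment of $N_k$. Integrating the vertical variable explicitly and exploiting the evenness of the kernel in $y_n-q_k^n$ (as in \cite{regularity}) yields
\[
N_k = 2\,\mathrm{P.V.}\!\int_{B'_r}\frac{G_{s_k}\!\bigl((u_k(z'+q_k')-u_k(q_k'))/|z'|\bigr)}{|z'|^{n-1+s_k}}\,dz',\quad G_s(a):=\int_0^a (1+\tau^2)^{-(n+s)/2}\,d\tau.
\]
Since $\nabla u_k(q_k')\to \nabla u(0)=0$ and the uniform $C^{1,\alpha}$ bound gives $|\rho_k(z')|\le C|z'|^{1+\alpha}$ for the remainder $\rho_k(z'):=u_k(z'+q_k')-u_k(q_k')-\nabla u_k(q_k')\cdot z'$, I split
\[
G_{s_k}\!\bigl(\tfrac{u_k(z'+q_k')-u_k(q_k')}{|z'|}\bigr) = G_{s_k}\!\bigl(\tfrac{\nabla u_k(q_k')\cdot z'}{|z'|}\bigr) + G_{s_k}'(\xi_k(z'))\,\tfrac{\rho_k(z')}{|z'|}.
\]
The first summand is odd in $z'$, because $G_{s_k}$ is odd and $\nabla u_k(q_k')\cdot z'/|z'|$ is odd, so its principal value on the symmetric ball $B'_r$ vanishes; the second, via $|G_{s_k}'|\le 1$, is dominated by $C|z'|^{\alpha-(n-1+s_k)}$ on $B'_r$, uniformly integrable in $k$ because $s_k\to s<\alpha$. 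Pointwise convergence of the integrand (from $u_k\to u$ in $C^1$) plus dominated convergence gives $N_k\to N$, proving the first claim.

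For the $C^2$, $s_k\to 1$ case, the same formula applies with $|\rho_k(z')|\le C|z'|^2$, and $\nabla u_k(q_k')\to 0$ makes the argument of $G_{s_k}$ uniformly small on $B'_r$ for $k$ large, so $G_{s_k}(a)=a+O(a^3)$; after the same odd--linear cancellation, the leading behavior of $N_k$ is
\[
2\int_{B'_r}\frac{\tfrac{1}{2}D^2 u_k(q_k')z'\cdot z'+o(|z'|^2)}{|z'|^{n+s_k}}\,dz',
\]
with the $o(|z'|^2)$ uniform in $k$. Polar coordinates together with $\int_{S^{n-2}}\sigma_i\sigma_j\,d\sigma=\tfrac{\omega_{n-1}}{n-1}\delta_{ij}$ and $(1-s_k)\int_0^r \rho^{-s_k}\,d\rho = r^{1-s_k}\to 1$ yield $(1-s_k)N_k\to \tfrac{\omega_{n-1}}{n-1}\Delta u(0)=\omega_{n-1}H[E](0)$, since $H[E](0)=\tfrac{\Delta u(0)}{n-1}$ when $\nabla u(0)=0$. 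Combined with $(1-s_k)F_k\to 0$, this finishes the second claim. The main obstacle is uniform control of the near--field singularity at $z'=0$ while $q_k'$ and $\nabla u_k(q_k')$ shift with $k$: the key is to peel off the odd linear term \emph{before} searching for a dominating function, so that the $C^{1,\alpha}$ (resp.\ $C^2$) regularity of $u_k$ produces the integrable majorant.
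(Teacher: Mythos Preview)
Your argument for the first claim is essentially the paper's: both use the graph formula \eqref{complete_curv_formula} to remove the principal value, obtain the $C^{1,\alpha}$ majorant via \eqref{Holder_useful}, and conclude by dominated convergence. The only cosmetic difference is that the paper translates $E_k$ by $q_k$ (so everything is centered at $0$) and truncates at radius $R$ before sending $R\to\infty$, whereas you keep the moving center $q_k$ and dominate the far field directly by $\chi_{\{|y|\ge c/2\}}|y|^{-n-s_0}$.

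For the second claim you take a genuinely different route. The paper applies an extra rotation $\mathcal R_k\in SO(n)$ so that the translated--rotated graph $v_k$ satisfies $\nabla v_k(0)=0$ \emph{exactly}, then compares $\I_{s_k}[F_k](0)$ with $\I_{s_k}[E\cap B_R](0)$ (the difference being bounded by $\omega_{n-1}\|v_k-u\|_{C^2}r^{1-s_k}/(1-s_k)$) and finally invokes Theorem~12 of \cite{Abaty} for the fixed set $E$. You instead compute the limit directly in polar coordinates, which is more self--contained. One point needs sharpening: the assertion that ``$\nabla u_k(q_k')\to 0$ makes the argument of $G_{s_k}$ uniformly small on $B'_r$'' is not correct as stated, since $(u_k(z'+q_k')-u_k(q_k'))/|z'|$ is bounded only by $\sup_{B'_{2r}}|\nabla u_k|\to\sup_{B'_{2r}}|\nabla u|$, which need not be small for fixed $r$. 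What does hold is the pointwise bound $|\xi_k(z')|\le |\nabla u_k(q_k')|+C|z'|$, and this suffices: writing $g_{s_k}(\xi_k)=1+O(|\xi_k|^2)$, the error contributes at most
\[
(1-s_k)\int_{B'_r}\big(|\nabla u_k(q_k')|^2+|z'|^2\big)\,|z'|^{2-n-s_k}\,dz'
\le C|\nabla u_k(q_k')|^2\,r^{1-s_k}+C(1-s_k)\frac{r^{3-s_k}}{3-s_k}\longrightarrow 0.
\]
With this refinement your direct computation goes through and yields the same limit $\omega_{n-1}H[E](q)$.
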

\medskip 

A similar problem is studied also in \cite{mattheorem}, where the author estimates the
difference between the fractional mean curvature of a set $E$ with $C^{1,\alpha}$
boundary and that of the set
$\Phi(E)$, where $\Phi$ is a $C^{1,\alpha}$ diffeomorphism of $\R^n$,
%
in terms of the $C^{0,\alpha}$ norm of the Jacobian of the diffeomorphism $\Phi$.


\smallskip

When $s\to 0^+$ we do not need the $C^{1,\alpha}$ convergence of sets, but only the uniform boundedness  of the $C^{1,\alpha}$ norms of the functions defining the boundary of $E_k$ in a neighborhood of the boundary points. However, we have to require that the measure of the symmetric difference is uniformly bounded. More precisely:

\begin{prop}\label{propsto0}
Let $ E\subset \Rn$
be such that $\alpha(E)$ exists.  Let 
$q \in \partial E$ be such that 
\bgs{ E\cap Q_{r,h}(q)=\{(x',x_n)\in\R^n\,|\, x'\in B'_{r}(q'),\,u(x')<x_n<h+q_n\},}
for some $r,h>0$ small enough and $u\in C^{1,\alpha}(\overline B'_{r}(q'))$ such that $u(q')=q_n$. 
Let $E_k\subset \Rn$ be such that
\[ |E_k\Delta E|<C_1  \] 
for some $C_1>0$. Let $q_k\in \partial E_k \cap B_d$, for some $d>0$, such that 
 \bgs{E_k\cap Q_{r,h}(q_k)=\{(x',x_n)\in\R^n\,|\,x'\in B'_{r}(q_k'),\,u_k(x')<x_n<h+q_{k,n}\} }
for some functions $u_k\in C^{1,\alpha}(\overline B_{r}'(q_k'))$ such that $u_k(q_k')=q_{k,n}$ and
\[ \|u_k\|_{C^{1,\alpha}(\overline B'_{r}(q'_k))} <C_2 \] 
for some $C_2>0$. Let $s_k\in(0,\alpha)$ be such that $s_k\xrightarrow{k\to\infty} 0$. Then
\bgs{\lim_{k\to\infty}s_k\I_{s_k}[E_k](q_k)=\omega_n-2\alpha(E).}
\end{prop}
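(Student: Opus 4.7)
The plan is to decompose $\I_{s_k}[E_k](q_k)$ as a principal-value part on a small fixed ball $B_\rho(q_k)$ plus a tail part on its complement, then show that the near part is uniformly bounded in $k$ so that $s_k$ times it vanishes, while the tail part produces $\omega_n-2\alpha(E)$ in the limit.

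Fix $\rho\in(0,\tfrac{1}{2}\min\{r,h\}]$, so that $B_\rho(q_k)\subset Q_{r,h}(q_k)$ for every $k$ and $E_k\cap B_\rho(q_k)$ is the supergraph of $u_k$. Let $H_k$ denote the half-space whose boundary is the hyperplane tangent to $\partial E_k$ at $q_k$. By odd symmetry of $\chi_{\Co H_k}-\chi_{H_k}$ under reflection across this tangent hyperplane (and invariance of $|q_k-y|^{-n-s_k}$ under the same reflection), the principal value of the corresponding kernel over $B_\rho(q_k)$ vanishes, so
\[
\bigg|P.V.\int_{B_\rho(q_k)}\frac{\chi_{\Co E_k}(y)-\chi_{E_k}(y)}{|q_k-y|^{n+s_k}}\,dy\bigg|\le 2\int_{B_\rho(q_k)\cap(E_k\Delta H_k)}\frac{dy}{|q_k-y|^{n+s_k}}.
\]
The uniform bound $\|u_k\|_{C^{1,\alpha}}<C_2$ forces $E_k\Delta H_k\cap B_\rho(q_k)$ to lie in a strip of width at most $C|y'-q_k'|^{1+\alpha}$ about the tangent hyperplane, with $C=C(C_2)$; a polar-coordinate estimate then yields an upper bound of order $\rho^{\alpha-s_k}/(\alpha-s_k)$, uniform in $k$ as soon as $s_k\le\alpha/2$. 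Multiplying by $s_k\to 0$ makes this contribution vanish.

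For the tail part, writing $\chi_{\Co E_k}-\chi_{E_k}=1-2\chi_{E_k}$ and computing the constant term in polar coordinates gives
\[
s_k\int_{\Co B_\rho(q_k)}\frac{\chi_{\Co E_k}(y)-\chi_{E_k}(y)}{|q_k-y|^{n+s_k}}\,dy=\omega_n\rho^{-s_k}-2s_k\,\alpha_{s_k}(q_k,\rho,E_k),
\]
and the first summand tends to $\omega_n$ as $s_k\to 0$. The measure hypothesis $|E_k\Delta E|<C_1$ yields
\[
\big|\alpha_{s_k}(q_k,\rho,E_k)-\alpha_{s_k}(q_k,\rho,E)\big|\le\int_{\Co B_\rho(q_k)}\frac{\chi_{E_k\Delta E}(y)}{|q_k-y|^{n+s_k}}\,dy\le C_1\rho^{-n-s_k},
\]
so $s_k$ times this difference also vanishes. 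Since $\{q_k\}\subset\overline{B_d}$ is contained in a compact set and $\alpha(E)$ exists by hypothesis, Proposition \ref{unifrq} applied with $K=\overline{B_d}$ and $r=\rho$ gives $s_k\alpha_{s_k}(q_k,\rho,E)\to\alpha(E)$.

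Summing the three contributions produces $s_k\I_{s_k}[E_k](q_k)\to\omega_n-2\alpha(E)$, as required. The only delicate point is the near-boundary estimate: one must verify that the strip containing $E_k\Delta H_k$ has width uniformly controlled in $k$. This follows because $|\nabla u_k(q_k')|\le C_2$ confines the rotation taking $\nu_{\partial E_k}(q_k)$ to the vertical axis to a compact subset of $SO(n)$, so the $C^{1,\alpha}$ seminorm of the rotated graph — and hence the constant $C(C_2)$ in the strip bound — are independent of $k$.
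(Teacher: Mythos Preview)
Your proof is correct and follows essentially the same approach as the paper: split the curvature into a near contribution controlled by the $C^{1,\alpha}$ bound (yielding $O(\rho^{\alpha-s_k}/(\alpha-s_k))$, hence negligible after multiplying by $s_k$) and a tail contribution handled via $\alpha_{s_k}$, the symmetric-difference bound $|E_k\Delta E|<C_1$, and Proposition~\ref{unifrq}. The only cosmetic differences are that the paper decomposes into three regions (the cylinder $Q_{r,h}(q_k)$, an annulus $B_R(q_k)\setminus Q_{r,h}(q_k)$, and $\Co B_R(q_k)$) rather than two, and bounds the near part via the explicit graph formula~\eqref{complete_curv_formula} and~\eqref{Holder_useful} instead of your tangent half-space trick; note, incidentally, that your final paragraph about rotating to the tangent frame is unnecessary, since the vertical strip bound $|u_k(y')-u_k(q_k')-\nabla u_k(q_k')\cdot(y'-q_k')|\le C_2|y'-q_k'|^{1+\alpha}$ together with $|q_k-y|\ge|y'-q_k'|$ already suffices in the original coordinates.
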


In particular, fixing $E_k=E$ in Theorem \ref{everything_converges} and Proposition \ref{propsto0} we obtain Proposition \ref{rsdfyish} stated in the Introduction. 

To prove
Theorem \ref{everything_converges}
we prove at first the following preliminary result. 

\begin{lemma}\label{supergraph_hp_for_proof}
Let  $E_k\xrightarrow{C^{1,\alpha}}E$ in a neighborhood of $0\in \partial E$. Let $q_k\in\partial E_k$ be such that $
q_k\longrightarrow 0$.  Then \[ E_k-q_k\xrightarrow{C^{1,\beta}}E  \quad \mbox{ in a neighborhood of $0$},\] 
for every $\beta \in(0,\alpha)$.\\
Moreover, if $E_k\xrightarrow{C^{2}}E$ in a neighborhood of $0\in \partial E$, $q_k\in\partial E_k$ are such that $
q_k\longrightarrow 0$ and $\mathcal{R}_k\in  SO(n)$ are such that \[\lim_{k\to \infty} |\mathcal R_k -\mbox{Id}|=0,\] then
\[ \mathcal R_k (E_k-q_k) \xrightarrow{C^{2}}E \quad \mbox{ in a neighborhood of $0$ }.\]
%
%
%
\end{lemma}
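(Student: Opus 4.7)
The plan is to pull everything back to the graph functions and reduce the claim to a uniform-convergence statement about those functions. By Definition \ref{convofsets}, I may fix $r,h>0$ small enough that $\partial E$ and $\partial E_k$ (for $k$ large) are graphs over $\overline{B}'_{2r}$ of $C^{1,\alpha}$ (respectively $C^2$) functions $u,u_k$ with $u(0)=0$ and $u_k\to u$ in the relevant norm. Writing $q_k=(q_k',q_{k,n})$, the condition $q_k\in\partial E_k$ forces $q_{k,n}=u_k(q_k')$, and then $q_k\to 0$ together with the uniform convergence $u_k\to u$ yields $q_k'\to 0$ and $q_{k,n}\to 0$. For $k$ large, the translate $E_k-q_k$ is therefore locally the supergraph of
\[
\tilde u_k(x'):=u_k(x'+q_k')-q_{k,n}
\]
on $\overline{B}'_r$, with $\tilde u_k(0)=0$. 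The proof reduces to (a) showing $\tilde u_k\to u$ in $C^{1,\beta}$ (or, in the $C^2$ part, that the analogous function after rotation converges in $C^2$), and (b) showing $(E_k-q_k)\Delta E\to 0$ in $L^1_{\mathrm{loc}}$, which follows by combining local $L^1$-convergence $\chi_{E_k}\to\chi_E$ with the continuity of translations in $L^1_{\mathrm{loc}}$.

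For (a) in the $C^{1,\alpha}$ case I would decompose
\[
\tilde u_k(x')-u(x')=\bigl(u_k-u\bigr)(x'+q_k')+\bigl(u(x'+q_k')-u(x')\bigr)-q_{k,n}.
\]
The first summand has the same $C^{1,\alpha}$-norm as $u_k-u$, so it goes to $0$ in $C^{1,\alpha}$ and in particular in $C^{1,\beta}$. The second summand involves only the fixed $u$; its $C^1$-norm tends to $0$ by uniform continuity of $u$ and $\nabla u$ on the compact set $\overline{B}'_{2r}$, while its $C^{1,\alpha}$-norm is bounded by $2\|u\|_{C^{1,\alpha}}$. Here I would invoke the standard interpolation inequality
\[
[f]_{C^{0,\beta}}\leq 2^{1-\beta/\alpha}\,[f]_{C^{0,\alpha}}^{\beta/\alpha}\,\|f\|_{C^0}^{1-\beta/\alpha},
\]
applied to $\nabla u(\cdot+q_k')-\nabla u(\cdot)$, to deduce convergence to $0$ in $C^{1,\beta}$ for every $\beta<\alpha$. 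Summing, $\tilde u_k\to u$ in $C^{1,\beta}$.

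For the $C^2$ statement the translation step is cleaner: $\nabla^2 u$ is continuous on the compact set $\overline{B}'_{2r}$, hence uniformly continuous, so $\nabla^2 u(\cdot+q_k')\to\nabla^2 u$ uniformly with no loss of regularity, and the analogous decomposition yields $\tilde u_k\to u$ in $C^2$. To absorb the rotation $\mathcal{R}_k\to\mathrm{Id}$, I would write the defining equation of $\partial\bigl(\mathcal{R}_k(E_k-q_k)\bigr)$ near $0$ as
\[
F_k(x',x_n):=\tilde u_k\bigl((\mathcal{R}_k^{-1}(x',x_n))'\bigr)-(\mathcal{R}_k^{-1}(x',x_n))_n=0.
\]
Since $\nabla\tilde u_k(0)\to\nabla u(0)$ and $\mathcal{R}_k\to\mathrm{Id}$, the partial derivative $\partial_{x_n}F_k(0)$ stays uniformly bounded away from $0$, so the implicit function theorem produces a $C^2$ function $\hat u_k$, defined on a ball independent of $k$, whose graph describes $\partial(\mathcal{R}_k(E_k-q_k))$ near $0$. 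Continuous dependence of the IFT solution on the data, applied to $(\tilde u_k,\mathcal{R}_k)\to(u,\mathrm{Id})$ in $C^2\times SO(n)$, yields $\hat u_k\to u$ in $C^2$. The local measure convergence is handled as in the pure translation case, with one extra comparison accounting for the continuity of the rotation action in $L^1_{\mathrm{loc}}$.

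The main technical subtlety is exactly the loss of exponent $\alpha\to\beta$ in the $C^{1,\alpha}$ part: translation is not continuous in the $C^{0,\alpha}$-norm (it is only continuous in $C^0$, while the $C^{0,\alpha}$ seminorm stays bounded), so a small amount of Hölder exponent must be sacrificed via interpolation. This issue vanishes in the $C^2$ case because the continuity of $\nabla^2 u$ on a compact set already gives uniform continuity, and a direct translation estimate suffices.
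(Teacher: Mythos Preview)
Your proof is correct and follows essentially the same route as the paper: reduce to the graph functions, decompose $\tilde u_k-u$ into the piece $(u_k-u)(\cdot+q_k')$ (which inherits the $C^{1,\alpha}$ convergence) and the pure translation piece $u(\cdot+q_k')-u$ (which converges in $C^1$ but only has bounded $C^{1,\alpha}$ seminorm, forcing the drop to $C^{1,\beta}$), and handle the rotation in the $C^2$ case via the implicit function theorem. The only cosmetic difference is that you invoke the packaged interpolation inequality $[f]_{C^{0,\beta}}\le 2^{1-\beta/\alpha}[f]_{C^{0,\alpha}}^{\beta/\alpha}\|f\|_{C^0}^{1-\beta/\alpha}$, whereas the paper writes out the equivalent $\delta$-splitting estimate $[\,\cdot\,]_{C^{0,\beta}}\le 2\delta^{-\beta}\|\cdot\|_{C^0}+2[\,\cdot\,]_{C^{0,\alpha}}\delta^{\alpha-\beta}$ by hand; and you give more detail on the IFT step for rotations than the paper, which merely remarks that ``some care must be taken'' there.
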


\begin{proof}
First of all, notice that since $q_k\longrightarrow0$, for $k$ big enough we have
\[|q'_k|<\frac{1}{2}r\qquad\textrm{and}\qquad|q_{k,n}|=|u_k(q'_k)|<\frac{1}{8}h.\]
 By \eqref{bded_graph_hp} and \eqref{norm_graph_conv}, we
see that for $k$ big enough
\[|u_k(x')|\leq\frac{3}{4}h,\qquad\forall\,x'\in B_{2r}'.\]
Therefore
\[|u_k(x')-q_{k,n}|<\frac{7}{8}h<h,\qquad\forall\,x'\in B_{2r}'.\]
If we define
\[\tilde{u}_k(x'):=u_k(x'+q'_k),\qquad x'\in \overline{B}'_r,\]
for every $k$ big enough we have 
\eqlab{\label{graphs_for_the_proof_eq}
(E_k-q_k) \cap Q_{r,h} =\{(x',x_n)\in\R^n\,|\,x'\in B'_r,\, \tilde u_k(x')<x_n<h\}.}
It is easy to check that the sequence $E_k-q_k$ locally converges in measure to $E$. We claim that
\eqlab{\label{conv_transl_graph}
\lim_{k\to\infty}\|\tilde{u}_k-u\|_{C^{1,\beta}(\overline{B}'_r)}=0.}
Indeed, let 
\[ \tau_k u(x'):= u(x'+q_k').\] 
We have that
\[ \|\tilde u_k-\tau_k u\|_{C^{1}(\overline B_r')}  \leq \|u_k-u\|_{C^{1}\big(\overline B'_{\frac{3}2r}\big)}  \]
and that
\[ \|\tau_k u-u\|_{C^{1}(\overline B_r')} \leq  \|\nabla u\|_{C^0\big(\overline B'_{\frac{3}2r}\big)}  |q_k'| + \|u\|_{C^{1,\alpha}\big( \overline B'_{\frac{3r}2}\big)}|q'_k|^{\alpha} .\] Thus by the triangular inequality 
\[  \lim_{k \to \infty} \|\tilde u_k -u\|_{C^{1}(\overline B_r')} =0,\]
thanks to \eqref{norm_graph_conv} and the fact that $q_k\to 0$.

Now, notice that $\nabla (\tilde u_k) =\tau_k (\nabla u_k)$, so 
\[ [\nabla \tilde u_k -\nabla u]_{C^{0,\beta}(\overline B'_r)} \leq  [\tau_k (\nabla  u_k -\nabla u)]_{C^{0,\beta}(\overline B'_r)}+ [\tau_k(\nabla  u) -\nabla u)]_{C^{0,\beta}(\overline B'_r)}.\] 
Therefore
\[[\tau_k (\nabla  u_k -\nabla u)]_{C^{0,\beta}(\overline B'_r)} \leq  [\nabla  u_k -\nabla u]_{C^{0,\beta}\big(\overline B'_{\frac{3r}2}\big)}\]
and for every $\delta >0$ we obtain
\[  [\tau_k( \nabla  u) -\nabla u]_{C^{0,\beta}(\overline B'_r)} \leq \frac{2}{\delta^\beta} \|\tau_k (\nabla u) -\nabla u\|_{C^{0}\big(\overline B'_{\frac{3r}2}\big)} + 2[\nabla u]_{C^{0,\alpha}(\overline B'_r)} \delta^{\alpha-\beta}.\]
Sending $k\to \infty$ we find that
\[ \limsup_{k\to \infty}  [\tau_k (\nabla  u) -\nabla u)]_{C^{0,\beta}(\overline B'_r)}  \leq 2[\nabla u]_{C^{0,\alpha}(\overline B'_r)} \delta^{\alpha-\beta}\]
for every $\delta>0$, hence
\[ \lim_{k \to \infty}  [\nabla \tilde u_k -\nabla u]_{C^{0,\beta}(\overline B'_r)} =0.\]
This concludes the proof of the first part of the Lemma.\\
As for the second part, the $C^2$ convergence of sets in a neighborhood of $0$ can be proved similarly. Some care must be taken when considering rotations, since one needs to use the implicit function theorem.
\end{proof}

\begin{proof}[Proof of Theorem \ref{everything_converges}]

Up to a translation and a rotation, we can suppose that $q=0$ and $\nu_E(0)=0$. Then we can find $r,h>0$ small enough
and $u\in C^{1,\alpha}(\overline B'_r)$ such that we can write $E\cap Q_{2r,2h}$ as in \eqref{opossum1}.

Since $s_k\to s\in(0,\alpha)$ for $k$ large enough we can suppose that $s_k,s \in[\sigma_0,\sigma_1]$ for $0<\sigma_0<\sigma_1<\beta<\alpha$.
Notice that there exists $\delta>0$ such that
\eqlab{\label{continuity_eq2}
B_\delta\subset\subset Q_{r,h}.}
We take an arbitrary $R>1$ as large as we want and define the sets
\[F_k:= (E_k\cap B_R) -q_k.\]
From Lemma \ref{supergraph_hp_for_proof} we have that in a neighborhood of $0$
\[ F_k\xrightarrow{C^{1,\beta}} E\cap B_R.\] 
In other words,
\eqlab{\label{convvvvv1}\lim_{k\to \infty} |F_k \Delta (E\cap B_R)| =0.}
Moreover, if $u_k$ is a function defining $E_k$ as a supergraph in a neighborhood of $0$ as in \eqref{opossum},
denoting $\tilde u_k(x')=u_k(x'+q_k')$ we have that
\[F_k\cap Q_{r,h}=\{(x',x_n)\in\Rn\,|\,x'\in B'_r,\,\tilde{u}_k(x')<x_n<h\}\]
and that
\eqlab{\label{convvvvv} \lim_{k\to \infty} \|\tilde u_k -u\|_{C^{1,\beta}(\overline B'_r)} =0, \quad  \quad  \|\tilde u_k\|_{C^{1,\beta}(\overline B'_r)}\leq M \; \mbox{ for some } \; M>0.}
We also remark that, by \eqref{bded_graph_hp} we can write
\[E\cap Q_{r,h}=\{(x',x_n)\in\R^n\,|\,x'\in B'_r,\,u(x')<x_n<h\}.\]

Exploiting \eqref{graphs_for_the_proof_eq} we can write the fractional mean curvature of $F_k$ in $0$
by using formula \eqref{complete_curv_formula}, that is
\begin{equation}\label{5rs6ydbfd}
\begin{split}
\I_{s_k}[F_k](0)&=2\int_{B'_r}\Big\{G_{s_k}\Big(\frac{\tilde{u}_k(y')-\tilde{u}_k(0)}{|y'|}\Big)
-G_{s_k}\Big(\nabla \tilde{u}_k(0)\cdot\frac{y'}{|y'|}\Big)\Big\}\frac{dy'}{|y'|^{n-1+s_k}}\\
&
\qquad\qquad+\int_{\Rn}\frac{\chi_{\Co F_k}(y)-\chi_{F_k}(y)}{|y|^{n+s_k}}\chi_{\Co Q_{r,h}}(y)\,dy.\end{split}\end{equation}
Now, we denote as in \eqref{mathcalg}
\[\mathcal G(s_k,\tilde{u}_k,y'):=\mathcal G(s_k,\tilde{u}_k,0,y')= G_{s_k}\Big(\frac{\tilde{u}_k(y')-\tilde{u}_k(0)}{|y'|}\Big)
-G_{s_k}\Big(\nabla \tilde{u}_k(0)\cdot\frac{y'}{|y'|}\Big)\] and we rewrite the
identity in \eqref{5rs6ydbfd} as
\bgs{\I_{s_k}[F_k](0)&=2\int_{B'_r}\mathcal G(s_k,\tilde{u}_k,y') \frac{dy'}{|y'|^{n-1+s_k}}+\int_{\R^n}\frac{\chi_{\Co F_k}(y)-\chi_{F_k}(y)}{|y|^{n+s_k}}\chi_{\Co Q_{r,h}}(y)\,dy.} 
Also, with this notation and by formula \eqref{complete_curv_formula} we have for $E$
\[\I_s[E\cap B_R](0)=2\int_{B'_r}\mathcal G(s,u,y')\frac{dy'}{|y'|^{n-1+s}}
+\int_{\R^n}\frac{\chi_{\Co (E\cap B_R)}(y)-\chi_{E\cap B_R}(y)}{|y|^{n+s}}\chi_{\Co Q_{r,h}}(y)\,dy.\]
%
We can  suppose that $r<1$. We begin by showing that for every $y'\in B_r'\setminus\{0\}$ we have
\eqlab{\label{pwise_conv}\lim_{k\to\infty}\mathcal G(s_k,\tilde{u}_k,y')=\mathcal G(s,u,y').}
First of all, we observe that
\[|\mathcal G(s_k,\tilde{u}_k,y')-\mathcal G(s,u,y')|
\leq|\mathcal G(s_k,\tilde{u}_k,y')-\mathcal G(s,\tilde{u}_k,y')|+|\mathcal G(s,\tilde{u}_k,y')-\mathcal G(s,u,y')|.\]
Then
\bgs{|\mathcal G(s_k,\tilde{u}_k,y')-\mathcal G(s,\tilde{u}_k,y')|&
=\Big|\int_{\nabla \tilde{u}_k(0)\cdot\frac{y'}{|y'|}}^{\frac{\tilde{u}_k(y')-\tilde{u}_k(0)}{|y'|}}(g_{s_k}(t)-g_s(t))\,dt\Big|\\
&
\leq2\int_0^{+\infty}|g_{s_k}(t)-g_s(t)|\,dt.}
Notice that for every $t\in\R$
\[\lim_{k\to\infty}|g_{s_k}(t)-g_s(t)|=0,\qquad\textrm{and}\qquad |g_{s_k}(t)-g_s(t)|\leq2 g_{\sigma_0}(t),\quad\forall\,k\in\mathbb N.\]
Since $g_{\sigma_0}\in L^1(\R)$, by the Dominated Convergence Theorem we obtain that
\[\lim_{k\to\infty}|\mathcal G(s_k,\tilde{u}_k,y')-\mathcal G(s,\tilde{u}_k,y')|=0.\]
We estimate
\bgs{|\mathcal G&(s,\tilde{u}_k,y')-\mathcal G(s,u,y')|\leq\Big|G_s\Big(\frac{\tilde{u}_k(y')-\tilde{u}_k(0)}{|y'|}\Big)
-G_s\Big(\frac{u(y')-u(0)}{|y'|}\Big)\Big|\\
&
\qquad\qquad\qquad\qquad
+\Big|G_s\Big(\nabla\tilde{u}_k(0)\cdot\frac{y'}{|y'|}\Big)-G_s\Big(\nabla u(0)\cdot\frac{y'}{|y'|}\Big)\Big|\\
&
\leq\Big|\frac{\tilde{u}_k(y')-\tilde{u}_k(0)}{|y'|}-\frac{u(y')-u(0)}{|y'|}\Big|
+|\nabla\tilde{u}_k(0)-\nabla u(0)|\\
&
=\Big|\nabla(\tilde{u}_k-u)(\xi)\cdot\frac{y'}{|y'|}\Big|+|\nabla\tilde{u}_k(0)-\nabla u(0)|\\
&
\leq2\|\nabla\tilde{u}_k-\nabla u\|_{C^0(\overline B'_r)},
}
which, by \eqref{conv_transl_graph}, tends to 0 as $k\to\infty$. This proves the pointwise convergence claimed in \eqref{pwise_conv}.\\
Therefore, for every $y'\in B'_r\setminus\{0\}$,
\[\lim_{k\to\infty}\frac{\mathcal G(s_k,\tilde{u}_k,y')}{|y'|^{n-1+s_k}}=
\frac{\mathcal G(s,u,y')}{|y'|^{n-1+s}}.\]
Thus, by \eqref{Holder_useful} we obtain that
\[\Big|\frac{\mathcal G(s_k,\tilde{u}_k,y')}{|y'|^{n-1+s_k}}\Big|
\leq\|\tilde{u}_k\|_{C^{1,\beta}(\overline B'_r)}\frac{1}{|y'|^{n-1-(\beta-s_k)}}
\leq \frac{M}{|y'|^{n-1-(\beta-\sigma_1)}}\in L^1_{loc}(\R^{n-1}),
\]
given \eqref{convvvvv}. 
The Dominated Convergence Theorem then implies that
\eqlab{\label{first_piece_conv}
\lim_{k\to\infty}\int_{B'_r}\mathcal G(s_k,\tilde{u}_k,y')\frac{dy'}{|y'|^{n-1+s_k}}=
\int_{B'_r}\mathcal G(s,u,y')\frac{dy'}{|y'|^{n-1+s}}.
}

Now, we show that 
\eqlab{\label{second_piece} \lim_{k \to \infty}\int_{\Rn}\frac{\chi_{\Co F_k}(y)-\chi_{F_k}(y)}{|y|^{n+s_k}} \chi_{\Co Q_{r,h}}(y)\, dy = \int_{\Rn}\frac{\chi_{\Co (E\cap B_R)}(y)-\chi_{E\cap B_R}(y)}{|y|^{n+s}}\chi_{\Co Q_{r,h}}(y) \, dy.}  
For this, we observe that
\bgs{\Big|\int_{\Co Q_{r,h}}&(\chi_{\Co (E\cap B_R)}(y)-\chi_{E\cap B_R}(y))\Big(\frac{1}{|y|^{n+s_k}}-\frac{1}{|y|^{n+s}}\Big)
dy\Big|
\leq\int_{\Co B_\delta}\Big|\frac{1}{|y|^{n+s_k}}-\frac{1}{|y|^{n+s}}\Big|dy,
}where we have used
\eqref{continuity_eq2}  in the last inequality. 
For $y\in \Co B_1$ 
\bgs{ \Big|\frac{1}{|y|^{n+s_k}}-\frac{1}{|y|^{n+s}}\Big| \leq \frac{2}{|y|^{n+\sigma_0} }\in L^1(\Co B_1) }
and  for  $y\in  B_1\setminus B_\delta$ 
\bgs{ \Big|\frac{1}{|y|^{n+s_k}}-\frac{1}{|y|^{n+s}}\Big| \leq \frac{2}{|y|^{n+\sigma_1}} \in L^1(B_1\setminus B_\delta).} 
We use then the Dominated Convergence Theorem and get that
\bgs{ \lim_{k \to \infty} \int_{\Co Q_{r,h}}(\chi_{\Co (E\cap B_R)}(y)-\chi_{E\cap B_R}(y))\Big(\frac{1}{|y|^{n+s_k}}-\frac{1}{|y|^{n+s}}\Big)
dy =0.} 
Now
\bgs{ \bigg|\int_{\Co Q_{r,h}} & \;\frac{ \chi_{\Co F_k}(y)-\chi_F{_k}(y) -\left(\chi_{\Co (E\cap B_R)(y) }- \chi_{E\cap B_R} (y)\right) }{|y|^{n+s_k}}\, dy\bigg| =2\int_{\Co Q_{r,h}} \frac{\chi_{F_k \Delta (E\cap B_R)} (y)}{|y|^{n+s_k}}\, dy \\ \leq& \;2 \frac{ |F_k \Delta (E\cap B_R)|}{\delta^{n+\sigma_1}} \xrightarrow{k\to \infty} 0,}
according to \eqref{convvvvv1}. The last two limits prove \eqref{second_piece}. Recalling \eqref{first_piece_conv}, we obtain that
\[ \lim_{k\to \infty} \I_{s_k}[F_k](0) = \I_s[E\cap B_R](0).\] 
We have that $\I_{s_k} [F_k](0)= \I_{s_k} [E_k\cap B_R](q_k)$, so
\bgs{|\I_{s_k}[E_k](q_k)-\I_s[E](0)|\leq &\;|\I_{s_k}[E_k](q_k) - \I_{s_k}[E_k\cap B_R](q_k)|+ |\I_{s_k}[F_k](0)- \I_s[E\cap B_R](0)| \\ &\;+ |\I_s[E\cap B_R](0)- \I_s[E](0)|.}
Since
\bgs{\label{ekbounded} | \I_{s_k}[E_k](q_k) - \I_{s_k} [E_k\cap B_R] (q_k) | +| \I_{s}[E](0) - \I_{s} [E\cap B_R] (0) | \leq  \frac{4 \omega_n }{\sigma_0} R^{-\sigma_0},    }
sending $R\to \infty$
\[ \lim_{k \to \infty} \I_{s_k}[E_k](q_k)=\I_s[E](0).\]
This concludes the proof of the first part of the Theorem.

\bigskip

In order to prove the second part of Theorem \ref{everything_converges}, we fix $R>1$ and we denote
\[F_k:=\mathcal R_k\big((E_k\cap B_R)-q_k\big),\]
where $\mathcal R_k\in SO(n)$ is a rotation such that
\[\mathcal R_k:\nu_{E_k}(0)\longmapsto\nu_E(0)=-e_n\quad\mbox{ and }\quad
\lim_{k\to\infty}|\mathcal R_k-\mbox{Id}|=0.\]
Thus, by Lemma \ref{supergraph_hp_for_proof} we know that $F_k\xrightarrow{C^2}E$ in a neighborhood of $0$.\\
To be more precise,
\eqlab{\label{convvvvv2}\lim_{k\to \infty} |F_k \Delta (E\cap B_R)| =0.}
Moreover, there exist $r,h>0$ small enough and $v_k,u\in C^2(\overline B'_r)$ such that
\bgs{&F_k\cap Q_{r,h}=\{(x',x_n)\in\Rn\,|\,x'\in B'_r,\,v_k(x')<x_n<h\},\\
&
E\cap Q_{r,h}=\{(x',x_n)\in\Rn\,|\,x'\in B'_r,\,u(x')<x_n<h\}}
and that
\eqlab{\label{convvvvv3} \lim_{k\to \infty} \|v_k -u\|_{C^2(\overline B'_r)} =0.}
Notice that $0\in\partial F_k$ and $\nu_{F_k}(0)=e_n$ for every $k$, that is,
\eqlab{\label{opossum4}v_k(0)=u(0)=0,\quad\nabla v_k(0)=\nabla u(0)=0.}

We claim that
\eqlab{\label{opossum3}\lim_{k\to\infty}(1-s_k)\big|\I_{s_k}[F_k](0)-\I_{s_k}[E\cap B_R](0)\big|=0.}
By \eqref{opossum4} 
%
and formula \eqref{complete_curv_formula} we have that
\bgs{\label{opossum5}
\I_{s_k}[F_k](0)&=2\int_{B'_r}\frac{dy'}{|y'|^{n+s_k-1}} \int_0^{\frac{v_k(y')}{|y'|} }\frac{dt}{(1+t^2)^{\frac{n+s_k}2}}
+\int_{\Co Q_{r,h}}\frac{\chi_{\Co F_k}(y)-\chi_{F_k}(y)}{|y|^{n+s_k}}\,dy\\
&= \I^{loc}_{s_k}[F_k](0) +\int_{\Co Q_{r,h}}\frac{\chi_{\Co F_k}(y)-\chi_{F_k}(y)}{|y|^{n+s_k}}\,dy.}
We use the same formula for $E\cap B_R$ and prove at first that
\bgs{\bigg|\int_{\Co Q_{r,h}}\frac{\chi_{\Co F_k}(y)-\chi_{F_k}(y)-\chi_{\Co(E\cap B_R)}(y)+\chi_{E\cap B_R}(y)}{|y|^{n+s_k}}\,dy\bigg|\le\frac{|F_k\Delta(E\cap B_R)|}{\delta^{n+s_k}}\le\frac{|F_k\Delta(E\cap B_R)|}{\delta^{n+1}},
}
(where we have used \eqref{continuity_eq2}), which tends to 0 as $k\to\infty$, by \eqref{convvvvv2}.

Moreover, notice that by the Mean Value Theorem and \eqref{opossum4} we have
\[|(v_k-u)(y')|\le\frac{1}{2}|D^2(v_k-u)(\xi')||y'|^2\le\frac{\|v_k-u\|_{C^2(\overline B'_r)}}{2}|y'|^2.\]
Thus
\bgs{ &\big| \I^{loc}_{s_k}[F_k](0)  - \I^{loc}_{s_k}[E\cap B_R](0) |
\le2\int_{B'_r} \frac{dy'}{|y'|^{n+s_k-1}} \bigg|\int_{\frac{u(y')}{|y'|}}^{\frac{v_k(y')}{|y'|}}\frac{dt}{(1+t^2)^\frac{n+s_k}{2}}\bigg|\\
&
\le2\int_{B'_r}|y'|^{-n-s_k}|(v_k-u)(y')|\,d y'
\le\frac{\omega_{n-1}\,\|v_k-u\|_{C^2(\overline B'_r)}}{1-s_k}r^{1-s_k},
}
hence by \eqref{convvvvv3} we obtain
\eqlab{\label{opossum7}
\lim_{k\to\infty}(1-s_k)\big|\I^{loc}_{s_k}[F_k](0)  - \I^{loc}_{s_k}[E\cap B_R](0)|=0.
}
This concludes the proof of claim \eqref{opossum3}.

Now we use the triangle inequality and have that
\bgs{
\big|(1-s_k)\I_{s_k}&[E_k](q_k)-H[E](0)\big|\le(1-s_k)\big|\I_{s_k}[E_k](q_k)-\I_{s_k}[F_k](0)\big|\\
&
+(1-s_k)\big|\I_{s_k}[F_k](0)-\I_{s_k}[E\cap B_R](0)\big|
+\big|(1-s_k)\I_{s_k}[E\cap B_R](0)-H[E](0)\big|.
}
The last term in the right hand side converges by Theorem 12 in \cite{Abaty}. As for the first term,
notice that
\[\I_{s_k}[F_k](0)=\I_{s_k}[E_k\cap B_R](q_k),\]
hence
\[\lim_{k\to\infty}(1-s_k)\big|\I_{s_k}[E_k\cap B_R](q_k)-\I_{s_k}[E_k](q_k)\big|\le\limsup_{k\to\infty}(1-s_k)\frac{2\omega_n}{s_k}R^{-s_k}=0.\]
Sending $k\to\infty$ in the triangle inequality above, we conclude the proof of the second part of Theorem \ref{everything_converges}.
\end{proof}

\begin{remark}
In relation to the second part of the proof, we point out that using the directional fractional mean curvature defined in \cite[ Definition 6, Theorem 8]{Abaty}, we can write
\bgs{ \I^{loc}_{s_k}[F_k](0)= &\;2\int_{\mathbb S^{n-2}}\bigg[\int_0^r\rho^{n-2}\bigg(\int_0^{v_k(\rho e)}\frac{dt}{(\rho^2+t^2)^\frac{n+s_k}{2}}\bigg)d\rho\bigg]d\mathcal H^{n-2}_e \\
=&\;2\int_{\mathbb S^{n-2}} \overline K_{s_k,e} d\mathcal H^{n-2}_e.}
One is then actually able to prove that
\bgs{\lim_{k\to\infty}(1-s_k)\overline K_{s_k,e}[E_k-q_k](0)=H_e[E](0),}
uniformly in $e\in\mathbb S^{n-2}$, by using formula \eqref{opossum7} and the first claim of Theorem 12 in \cite{Abaty}.
\end{remark}

\textcolor{black}{\begin{remark}\label{nuno}
The proof of Theorem \ref{everything_converges}, as well as the proof of the next Proposition \ref{propsto0}, settles the case in which $n\geq 2$. For $n=1$, the proof follows in the same way, after observing that the local contribution to the mean curvature is equal to zero because of symmetry. As a matter of fact, the formula in~\eqref{complete_curv_formula} for the mean curvature (which has no meaning for $n=1$) is not required.\\
We remark also that in our notation $\omega_0=0$. This gives consistency to the second claim of Theorem \ref{everything_converges} also for $n=1$.
\end{remark}}

\bigskip
We prove now the continuity of the fractional mean curvature as $s\to 0$.

\begin{proof}[Proof of Proposition \ref{propsto0}]
Up to a translation, we can take $q=0$ and $u(0)=0$. \\
For $R>2\max\{r,h\}$, we write
\bgs{  \I_{s_k}[E_k](q_k) =&\;
P.V.\int_{ Q_{r,h}(q_k)} \frac{\chi_{\Co E_k}(y) -\chi_{E_k}(y)}{|y-q_k|^{n+s_k}}\, dy
+\int_{\Co Q_{r,h}(q_k)} \frac{\chi_{\Co E_k}(y)-\chi_{E_k}(y)}{|y-q_k|^{n+s_k}}\, dy 
\\
=&\; P.V.\int_{ Q_{r,h}(q_k)} \frac{\chi_{\Co E_k}(y)-\chi_{E_k}(y)}{|y-q_k|^{n+s_k}}\, dy  + \int_{ B_R(q_k)\setminus Q_{r,h}(q_k)}\frac{\chi_{\Co E_k}(y)-\chi_{E_k}(y)}{|y-q_k|^{n+s_k}}\, dy
\\
&\; +\int_{\Co B_R(q_k)} \frac{\chi_{\Co E_k}(y)-\chi_{E_k}(y)}{|y-q_k|^{n+s_k}}\, dy  
 \\
=&\;I_1(k)+I_2(k)+I_3(k).}
Now  using  \eqref{complete_curv_formula}, \eqref{mathcalg} and \eqref{Holder_useful}  we have that 
\bgs{|I_1(k)|\leq &\;  2 \int_{B'_r(q_k')} \frac{| \mathcal G(s_k,u_k,q_k', y')|}{|y'-q_k'|^{n+s_k-1}} \,dy'
 \leq 2  \| u_k \|_{C^{1,\alpha}(\overline{B}'_r(q_k'))}\int_{B'_r(q_k')} \frac{  |y'-q_k'|^{\alpha}}{|y'-q_k'|^{n+s_k-1}} \,dy'
 \\
 \leq  &\; 2 C_2 \omega_{n-1} \frac{r^{\alpha-s_k}}{\alpha-s_k}.
}
Using \eqref{continuity_eq2} we also have that
\[|I_2(k)| \leq  \int_{ B_R(q_k)\setminus B_\delta(q_k)}\frac{dy}{|y-q_k|^{n+s_k}} 
= \omega_n\frac{\delta^{-s_k}-R^{-s_k}}{s_k}. \]
Thus
\eqlab{\label{kangaroo}
\lim_{k\to\infty}s_k\big(|I_1(k)|+|I_2(k)|\big)=0.}
Furthermore
\bgs{ \big | s_k I_3(k)- &\big(\omega_n-2 s_k \alpha_{s_k}(0,R,E) \big)\big|
\\
\leq &\; \bigg|s_k \int_{\Co B_R(q_k)} \frac{dy}{|y-q_k|^{n+s_k}}  -2 s_k \int_{\Co B_R(q_k)} \frac{\chi_{E_k}(y)}{|y-q_k|^{n+s_k}}\, dy   - \omega_n+2s_k \alpha_{s_k}(q_k,R,E))\bigg|
\\
&\;+2s_k |\alpha_{s_k}(q_k,R,E)- \alpha_{s_k}(0,R,E)|
\\
\leq& \; |\omega_n R^{-s_k}-\omega_n| + 2 s_k\bigg| \int_{\Co B_R(q_k)}\frac{\chi_{E_k}(y)}{|y-q_k|^{n+s_k}}\, dy  - \int_{\Co B_R(q_k)} \frac{\chi_{E}(y)}{|y-q_k|^{n+s_k}}\, dy \bigg| 
\\
&\;+ 2s_k |\alpha_{s_k}(q_k,R,E)- \alpha_{s_k}(0,R,E)|
\\
\leq &\; |\omega_n R^{-s_k}-\omega_n| + 2 s_k \int_{\Co B_R(q_k)}\frac{\chi_{E_k\Delta E}(y)}{|y-q_k|^{n+s_k}}\, dy  
+ 2s_k| \alpha_{s_k}(q_k,R,E)- \alpha_{s_k}(0,R,E)|
\\
\leq &\; |\omega_n R^{-s_k}-\omega_n| + 2 C_1 s_k R^{-n-s_k} +2s_k | \alpha_{s_k}(q_k,R,E)- \alpha_{s_k}(0,R,E)|,
 }
 where we have used that $|E_k\Delta E|<C_1$.
 
 Therefore, since $q_k\in B_d$ for every $k$, as a consequence of Proposition \ref{unifrq} it follows that
\eqlab{\label{kangaroo1}\lim_{k\to \infty}\big | s_k I_3(k)- &\big(\omega_n-2 s_k \alpha_{s_k}(0,R,E) \big)\big|=0.}

Hence, by \eqref{kangaroo} and \eqref{kangaroo1}, we get that
\bgs{ \lim_{k \to \infty} s_k \I_{s_k}[E_k](q_k)=\omega_n-2\lim_{k\to\infty}s_k\alpha_{s_k}(0,R,E)=\omega_n-2\alpha(E),}
concluding the proof.
\end{proof} 

\begin{proof}[Proof of Theorem \ref{asympts}]
Arguing as in the proof of Proposition \ref{propsto0}, by keeping fixed $E_k=E$ and $q_k=p$, we obtain
\bgs{ \liminf_{s\to0} s\, \I_s[E](p)=\omega_n-2\limsup_{s\to0}s\,\alpha_s(0,R,E)=\omega_n-2\overline{\alpha}(E),}
and similarly for the limsup.
\end{proof}

As a corollary of Theorem \ref{everything_converges} and Theorem \ref{asympts}, we have the following result.
\begin{theorem}\label{changeyoursign}
Let $E\subset\R^n$ and let $p\in\partial E$ be such that $\partial E\cap B_r(p)$ is $C^2$ for some $r>0$.
Suppose that the classical mean curvature of $E$ in $p$ is $H(p)<0$. Also assume that 
\[\overline \alpha(E) < \frac{\omega_n}2.\] Then there exist $\sigma_0<\tilde{s}<\sigma_1$ in $(0,1)$ such that

$(i)\quad\I_s[E](p)>0$ for every $s\in(0,\sigma_0]$, and actually
\[\liminf_{s\to0^+}s \;\I_s[E](p)=\omega_n- 2\overline \alpha(E),\]

$(ii)\quad\I_{\tilde{s}}[E](p)=0,$

$(iii)\quad\I_s[E](p)<0$ for every $s\in[\sigma_1,1)$,
and actually
\[ \lim_{s\to 1} (1-s)\;\I_s[E](p)= \omega_{n-1}H[E](p).\]

\end{theorem}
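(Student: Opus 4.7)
The proof is a direct application of the Intermediate Value Theorem, relying on the two asymptotic formulas (one at $s \to 0^+$, one at $s \to 1^-$) together with the continuity in $s$ provided by Proposition \ref{rsdfyish}. The plan is to show positivity at small $s$, negativity at $s$ near $1$, and continuity on $(0,1)$; part (ii) will then follow automatically.

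First I would prove (i). Since $\partial E\cap B_r(p)$ is $C^2$, in particular it is $C^{1,\gamma}$ for every $\gamma\in(0,1]$ near $p$, so Theorem \ref{asympts} applies and yields
\[
\liminf_{s\to 0^+} s\,\I_s[E](p) = \omega_n - 2\overline{\alpha}(E).
\]
By hypothesis the right-hand side is strictly positive, so by the definition of $\liminf$ there exists $\sigma_0\in(0,1)$ with
\[
s\,\I_s[E](p) \geq \tfrac{1}{2}\bigl(\omega_n - 2\overline{\alpha}(E)\bigr) > 0, \qquad \forall\,s\in(0,\sigma_0],
\]
which gives $\I_s[E](p)>0$ on $(0,\sigma_0]$ and also the claimed value of the $\liminf$.

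Next, for (iii), I would invoke the classical $s\to 1^-$ asymptotic recalled in the Introduction (Theorem 12 in \cite{Abaty}), which applies since $\partial E$ is $C^2$ near $p$, and reads
\[
\lim_{s\to 1^-} (1-s)\,\I_s[E](p) = \omega_{n-1} H[E](p).
\]
Since $H[E](p)<0$, there exists $\sigma_1\in(\sigma_0,1)$ such that
\[
(1-s)\,\I_s[E](p) \leq \tfrac{1}{2}\,\omega_{n-1} H[E](p) < 0, \qquad \forall\,s\in[\sigma_1,1),
\]
which forces $\I_s[E](p)<0$ on $[\sigma_1,1)$ and also gives the claimed limit.

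Finally, for (ii), I would apply Proposition \ref{rsdfyish} with $x=p$, which guarantees that the map $s\mapsto \I_s[E](p)$ is continuous on $(0,1)$ (alternatively, continuity follows from the first part of Theorem \ref{everything_converges} applied with the constant sequence $E_k\equiv E$ and $q_k\equiv p$, for any $s\in(0,1)$). Thus the function is continuous on the compact interval $[\sigma_0,\sigma_1]$ and takes strictly positive value at $\sigma_0$ and strictly negative value at $\sigma_1$; the Intermediate Value Theorem produces $\tilde s\in(\sigma_0,\sigma_1)$ with $\I_{\tilde s}[E](p)=0$. The only nontrivial ingredient is the continuity in $s$, but this is already packaged in Proposition \ref{rsdfyish}; everything else is an immediate consequence of the sign information coming from the two asymptotic regimes.
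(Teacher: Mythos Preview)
Your proof is correct and is exactly the argument the paper has in mind: the statement is presented there as an immediate corollary of Theorem~\ref{asympts} and Theorem~\ref{everything_converges} (via Proposition~\ref{rsdfyish}), with no further details given. Your write-up simply makes explicit the intermediate value argument connecting the two asymptotic regimes through the continuity in $s$.
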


 \appendix
 \section{Some geometric observations}\label{appendicite}

\subsection{Sliding the balls}

For the convenience of the reader, we collect here some auxiliary and elementary
results of geometric nature, that are used in the proofs of the main results.

\begin{lemma}\label{slidetheballs}
Let $F\subset\Rn$ be such that\footnote{Concerning the statement of Lemma~\ref{slidetheballs},
we recall that the notation~$\overline{F}$ denotes the closure of the set~$F$,
when~$F$ is modified, up to sets of measure zero,
in such a way that~$F$ is assumed to contain its measure theoretic interior~$F_{int}$
and to have empty intersection with the exterior~$F_{ext}$, 
according to the setting described in Section~\ref{MEAS:ASS:SEC}.
For instance, if~$F$ is a segment in~$\R^2$, this convention implies that~$F_{int}=\emptyset$, $F_{ext}=\R^2$ and so~$F$ and~$\overline{F}$ in this case also reduce to the empty set.}
\[B_\delta(p)\subset F_{ext}\quad\textrm{for some }\delta>0\qquad\textrm{and}\qquad q\in\overline{F},\]
and let $c:[0,1]\longrightarrow\Rn$ be a continuous curve connecting $p$ to $q$, that is
\[c(0)=p\qquad\textrm{and}\qquad c(1)=q.\]
Then there exists $t_0\in[0,1)$ such that $B_\delta\big(c(t_0)\big)$ is an exterior tangent ball to $F$,
that is
\eqlab{\label{slide_ext_tg}
B_\delta\big(c(t_0)\big)\subset F_{ext}\qquad\textrm{and}\qquad\partial B_\delta\big(c(t_0)\big)\cap\partial F\not=\emptyset.}
\end{lemma}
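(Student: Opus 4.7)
The plan is to define $t_0$ as the first instant along the path at which the moving ball $B_\delta(c(t))$ ceases to be entirely contained in $F_{ext}$, and then to verify the two required properties by separate continuity arguments. Concretely, I would set
\[t_0:=\inf\bigl\{t\in[0,1]\,\big|\,B_\delta(c(t))\not\subset F_{ext}\bigr\}.\]
This set is non-empty: since $F_{ext}$ is open and disjoint from $F_{int}\cup\partial F$, we have $F_{ext}\cap\overline{F}=\emptyset$, and $c(1)=q\in\overline{F}\cap B_\delta(c(1))$ shows that $1$ belongs to the set. So $t_0\in[0,1]$, and since $0$ lies in the complement of this set (by the hypothesis $B_\delta(p)\subset F_{ext}$), the number $t_0$ is well defined.

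The first key step is to show $B_\delta(c(t_0))\subset F_{ext}$. Suppose not: then there exists $x\in B_\delta(c(t_0))$ with $x\notin F_{ext}$. Because $|x-c(t_0)|<\delta$, continuity of $c$ yields some $\eta>0$ such that $|x-c(t)|<\delta$ whenever $|t-t_0|<\eta$, and hence $x\in B_\delta(c(t))\setminus F_{ext}$ for such $t$. Choosing $t\in(t_0-\eta,t_0)$ contradicts the definition of $t_0$ as an infimum. This proves the inclusion, and in particular $t_0\ne 1$ (otherwise $c(1)=q\in\overline{F}$ would belong to $F_{ext}$, impossible); thus $t_0\in[0,1)$, which is the range required in \eqref{slide_ext_tg}.

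The second key step is to show that $\partial B_\delta(c(t_0))\cap\partial F\ne\emptyset$. Since the previous step gave $t_0\notin\{t:B_\delta(c(t))\not\subset F_{ext}\}$, there must exist a sequence $t_n\searrow t_0$ belonging to this set, i.e.\ with $B_\delta(c(t_n))\not\subset F_{ext}$. Assume by contradiction that $\overline{B_\delta(c(t_0))}\cap\partial F=\emptyset$; then the connected compact set $\overline{B_\delta(c(t_0))}$ is contained in the union of the disjoint open sets $F_{int}$ and $F_{ext}$, and since it meets $F_{ext}$ it is entirely contained in $F_{ext}$. Using that $F_{ext}$ is open and $\overline{B_\delta(c(t_0))}$ is compact, there is $\rho>0$ such that the $\rho$-neighborhood of $\overline{B_\delta(c(t_0))}$ lies in $F_{ext}$; by continuity of $c$, this forces $B_\delta(c(t))\subset F_{ext}$ for all $t$ sufficiently close to $t_0$, contradicting the existence of the sequence $\{t_n\}$. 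Therefore there exists $x\in\overline{B_\delta(c(t_0))}\cap\partial F$, and since $B_\delta(c(t_0))\subset F_{ext}$ is disjoint from $\partial F$, such $x$ must belong to $\partial B_\delta(c(t_0))$, proving \eqref{slide_ext_tg}.

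The main subtlety, and the only point where one must invoke the measure theoretic setting of Section \ref{MEAS:ASS:SEC}, is the use of the disjoint open decomposition $\R^n=F_{int}\cup F_{ext}\cup\partial F$ with $\partial F=\partial^{-}F$. Without this, the dichotomy between $F_{int}$ and $F_{ext}$ in the connectedness argument above would not be clean, since a generic closed set may have interior points of $F$ that are also limit points of $F_{ext}$. Everything else is just openness of $F_{ext}$, compactness of closed balls, and continuity of $c$, so no further obstacle is expected.
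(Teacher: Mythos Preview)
Your proof is correct and follows essentially the same approach as the paper's: you define $t_0$ as $\inf\{t:B_\delta(c(t))\not\subset F_{ext}\}$ while the paper uses the equivalent $\sup\{\tau:\bigcup_{t\le\tau}B_\delta(c(t))\subset F_{ext}\}$, and both the containment step and the tangency step proceed by the same continuity and openness arguments (the paper replaces your connectedness argument with the observation $\overline{B_\delta(c(t_0))}\subset\overline{F_{ext}}=F_{ext}\cup\partial F$). One cosmetic point: in your first step the phrase ``choosing $t\in(t_0-\eta,t_0)$'' tacitly assumes $t_0>0$; when $t_0=0$ the conclusion $B_\delta(c(0))\subset F_{ext}$ is just the hypothesis, so no argument is needed there.
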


\begin{proof}
Define
\eqlab{\label{slide_t}
t_0:=\sup\Big\{\tau\in[0,1]\,\big|\,\bigcup_{t\in[0,\tau]}B_\delta\big(c(t)\big)\subset F_{ext}\Big\}.
}
We begin by proving that
\eqlab{\label{jardine}
B_\delta\big(c(t_0)\big)\subset F_{ext}.
}
If $t_0=0$, this is trivially true by hypothesis.
Thus, suppose that $t_0>0$ and assume by contradiction that
\bgs{
B_\delta\big(c(t_0)\big)\cap \overline{F}\not=\emptyset.
}
Then there exists a point
\[
y\in\overline{F}=F_{int}\cup\partial F\quad\mbox{s.t.}\quad d:=|y-c(t_0)|<\delta.
\]
By exploiting the continuity of $c$, we can find $t\in[0,t_0)$
such that
\[
|y-c(t)|\le|y-c(t_0)|+|c(t_0)-c(t)|\le d+\frac{\delta-d}{2}<\delta,
\]
and hence $y\in B_\delta\big(c(t)\big)$.
However, this is in contradiction with the fact that, by definition of $t_0$, we have $B_\delta\big(c(t)\big)\subset F_{ext}$.
This concludes the proof of \eqref{jardine}.

We point out that, since $q\in\overline F$, by \eqref{jardine} we have that $t_0<1$.

Now we prove that $t_0$ as defined in \eqref{slide_t} satisfies \eqref{slide_ext_tg}.

Notice that by \eqref{jardine} we have
\eqlab{\label{slide_pf}
\overline{B_\delta\big(c(t_0)\big)}\subset \overline{F_{ext}}=F_{ext}\cup\partial F.
}
Suppose that
\[\partial B_\delta\big(c(t_0)\big)\cap\partial F=\emptyset.\]
Then \eqref{slide_pf} implies that
\[\overline{B_\delta\big(c(t_0)\big)}\subset F_{ext},\]
and, since $F_{ext}$ is an open set, we can find $\tilde\delta>\delta$ such that
\[B_{\tilde\delta}\big(c(t_0)\big)\subset F_{ext}.\]
By continuity of $c$ we can find $\eps\in(0,1-t_0)$ small enough such that
\[|c(t)-c(t_0)|<\tilde\delta-\delta,\qquad\forall\,t\in[t_0,t_0+\eps].\]
Therefore
\[B_\delta\big(c(t)\big)\subset B_{\tilde\delta}\big(c(t_0)\big)\subset F_{ext},\qquad\forall\,t\in[t_0,t_0+\eps],\]
and hence
\[\bigcup_{t\in[0,t_0+\eps]}B_\delta\big(c(t)\big)\subset F_{ext},\]
which is in contradiction with
the definition of $t_0$. Thus
\[\partial B_\delta\big(c(t_0)\big)\cap\partial F\not=\emptyset,\]
which concludes the proof.
\end{proof}

\subsection{Smooth domains}\label{A2}

Given a set $F\subset\R^n$, the signed distance function $\bar{d}_F$ from $\partial F$, negative inside $F$, is defined as
\begin{equation*}
\bar{d}_F(x)=d(x,F)-d(x,\Co F)\qquad\mbox{for every }x\in\R^n,
\end{equation*}
where
\[d(x,A):=\inf_{y\in A}|x-y|,\]
denotes the usual distance from a set $A$. Given an open set $\Omega\subset\R^n$, we denote by
\begin{equation*}
N_\rho(\partial\Omega):=\{|\bar{d}_\Omega|<\rho\}=\{x\in\R^n\,|\,d(x,\partial\Omega)<\rho\}
\end{equation*}
the tubular $\rho$-neighborhood of $\partial\Omega$.
For the details about the properties of the
signed distance function, we refer to \cite{GilTru,Ambrosio} and the references cited therein.

Now we recall the notion of (uniform) interior ball condition.
\begin{defn}
We say that an open set $\mathcal O$ satisfies an interior ball condition at $x\in\partial\mathcal O$ if
there exists a ball $B_r(y)$ s.t.
\begin{equation*}
B_r(y)\subset\mathcal O\qquad\textrm{and}\qquad x\in\partial B_r(y).
\end{equation*}
We say that the condition is ``strict'' if $x$ is the only tangency point, i.e.
\[\partial B_r(y)\cap\partial\mathcal O=\{x\}.\]
The open set $\mathcal O$ satisfies a uniform (strict) interior ball condition of radius $r$ if
it satisfies the (strict) interior ball condition at every point of $\partial\mathcal O$,
with an interior tangent ball of radius at least $r$.\\
In a similar way one defines exterior ball conditions.
\end{defn}
We remark that
if $\mathcal O$ satisfies an interior ball condition of radius $r$ at $x\in\partial\mathcal O$,
then the condition is strict for every radius $r'<r$.

\begin{remark}\label{ext_unif_omega}
Let $\Omega\subset\R^n$ be a bounded open set with $C^2$ boundary. It is well
known that $\Omega$ satisfies a uniform interior and exterior ball condition. We fix $r_0=r_0(\Omega)>0$
such that $\Omega$ satisfies a strict interior and a strict exterior ball contition of radius $2r_0$
at every point $x\in\partial\Omega$.
Then
\begin{equation}\label{r01}
\bar{d}_\Omega\in C^2(N_{2r_0}(\partial\Omega)),
\end{equation}
(see e.g. Lemma 14.16 in \cite{GilTru}).
\end{remark}

We remark that the distance function $d(-,E)$ is differentiable at $x\in\R^n\setminus\overline E$ if
and only if there is a unique point $y\in\partial E$ of minimum distance, i.e.
\[d(x,E)=|x-y|.\]
In this case, the two points $x$ and $y$ are related by the formula
\[y=x-d(x,E)\nabla d(x,E).\]

This generalizes to the signed distance function. In particular, if $\Omega$ is bounded and has $C^2$ boundary, then we can
define a $C^1$ projection function from the tubular $2r_0$-neighborhood $N_{2r_0}(\partial\Omega)$ onto $\partial\Omega$ by
assigning to a point $x$ its unique nearest point $\pi(x)$, that is
\[\pi:N_{2r_0}(\partial\Omega)\longrightarrow\partial\Omega,\qquad\pi(x):=x-\bar{d}_\Omega(x)\nabla\bar{d}_\Omega(x).\]
We also remark that
on $\partial\Omega$ we have that $\nabla\bar{d}_\Omega=\nu_\Omega$ and that
\[\nabla\bar{d}_\Omega(x)=\nabla\bar{d}_\Omega(\pi(x))=\nu_\Omega(\pi(x)),\qquad\forall x\in N_{2r_0}(\partial\Omega).\]
Thus $\nabla\bar{d}_\Omega$ is a vector field which extends
the outer unit normal to a tubular neighborhood of $\partial\Omega$, in a $C^1$ way.

Notice that given a point $y\in\partial\Omega$, for every $|\delta|<2r_0$ the point $x:=y+\delta\nu_\Omega(y)$ is such that $\bar{d}_\Omega(x)=\delta$ (and $y$ is its unique nearest point).
Indeed, we consider for example $\delta\in(0,2r_0)$. Then we can find an exterior tangent ball
\[B_{2r_0}(z)\subset\Co\Omega,\qquad\partial B_{2r_0}(z)\cap\partial\Omega=\{y\}.\]
Notice that the center of the ball must be
\[z=y+2r_0\nu_\Omega(y).\]
Then, for every $\delta\in(0,2r_0)$ we have
\[B_\delta(y+\delta\nu_\Omega(y))\subset B_{2r_0}(y+2r_0\nu_\Omega(y))\subset
\Co\Omega,\qquad\partial B_\delta(y+\delta\nu_\Omega(y))\cap\partial\Omega=\{y\}.\]
This proves that
\[|\bar{d}_\Omega(y+\delta\nu_\Omega(y))|=d(x,\partial\Omega)=\delta.\]
Finally, since the point $x$ lies outside $\Omega$, its signed distance function is positive.

\begin{remark}\label{c21}
Since $|\nabla\bar{d}_\Omega|=1$, the bounded open sets
\begin{equation*}
\Omega_\delta:=\{\bar{d}_\Omega<\delta\}
\end{equation*}
have $C^2$ boundary
\begin{equation*}
\partial\Omega_\delta=\{\bar{d}_\Omega=\delta\},
\end{equation*}
for every $\delta\in(-2r_0,2r_0)$.
\end{remark}

As a consequence, we know that for every $|\delta|<2r_0$ the set $\Omega_\delta$
satisfies a uniform interior and exterior ball condition of radius $r(\delta)>0$.
Moreover, we have that $r(\delta)\geq r_0$ for every $|\delta|\leq r_0$
(see also Appendix A in \cite{MR3436398}
for related results).

\begin{lemma}\label{geomlem}
Let $\Omega\subset\R^n$ be a bounded open set with $C^2$ boundary.
Then for every $\delta\in[-r_0,r_0]$ the set $\Omega_\delta$ 
satisfies a uniform interior and exterior ball condition of radius at least $r_0$, i.e.
\begin{equation*}
r(\delta)\geq r_0\qquad\textrm{for every }|\delta|\leq r_0.
\end{equation*}
\end{lemma}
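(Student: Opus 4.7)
The plan is to exhibit, at every point $x\in\partial\Omega_\delta$, an explicit interior and exterior tangent ball of radius at least $r_0$, obtained by enlarging/shrinking along the normal direction the $2r_0$-tangent balls to $\Omega$ at the projection point $y=\pi(x)\in\partial\Omega$.

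First I fix $|\delta|\le r_0$ and $x\in\partial\Omega_\delta$. Since $\delta\in(-2r_0,2r_0)$, the discussion preceding Remark~\ref{c21} gives $y:=\pi(x)\in\partial\Omega$, $x=y+\delta\,\nu_\Omega(y)$, and $\nu_{\Omega_\delta}(x)=\nabla\bar d_\Omega(x)=\nu_\Omega(y)$. Writing $z_{int}:=y-2r_0\nu_\Omega(y)$ and $z_{ext}:=y+2r_0\nu_\Omega(y)$, the choice of $r_0$ (cf.\ Remark~\ref{ext_unif_omega}) yields $B_{2r_0}(z_{int})\subset\Omega$ and $B_{2r_0}(z_{ext})\subset\Co\Omega$. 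I will show that
\[
B_{2r_0+\delta}(z_{int})\subset\Omega_\delta\quad\text{and}\quad B_{2r_0-\delta}(z_{ext})\subset\Co\Omega_\delta,
\]
both tangent to $\partial\Omega_\delta$ at $x$. Their radii $2r_0\pm\delta\ge r_0$ for $|\delta|\le r_0$, so this concludes the proof. The tangency at $x$ is immediate from the identities $|x-z_{int}|=|2r_0+\delta|=2r_0+\delta$ and $|x-z_{ext}|=|\delta-2r_0|=2r_0-\delta$ (for $|\delta|\le r_0\le 2r_0$).

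The heart of the argument is the containment. I split each case according to whether a point $w$ of the candidate ball lies in the original $2r_0$-tangent ball to $\Omega$ or in the annular region outside it. For the interior ball $B_{2r_0+\delta}(z_{int})$ the easy case is $|w-z_{int}|<2r_0$, so $w\in\Omega$ and $\bar d_\Omega(w)<0\le\delta$ when $\delta\ge0$; if instead $\delta<0$, I argue that $B_{|\delta|}(w)\subset B_{2r_0}(z_{int})\subset\Omega$, which forces $d(w,\partial\Omega)>|\delta|$ and hence $\bar d_\Omega(w)<\delta$. The symmetric argument applies to the exterior ball $B_{2r_0-\delta}(z_{ext})$, using $B_{|\delta|}(w)\subset B_{2r_0}(z_{ext})\subset\Co\Omega$ when $\delta\ge0$.

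The main (and only nontrivial) obstacle is the \emph{annular case}, namely a point $w$ in the candidate ball with $2r_0\le|w-\bullet|<2r_0\pm\delta$, which arises for $\delta>0$ on the interior side and for $\delta<0$ on the exterior side. Here $w$ falls outside the original $2r_0$-tangent ball and may therefore sit on the ``wrong'' side of $\partial\Omega$. The key geometric observation is that the segment from $w$ to the center meets $\partial B_{2r_0}(\bullet)\subset\overline{\Omega}$ (respectively $\overline{\Co\Omega}$) at a point of distance strictly less than $|\delta|$ from $w$; this estimate controls $d(w,\overline{\Omega})$ (resp.\ $d(w,\overline{\Co\Omega})$) by $|\delta|$, and combined with the relations $\bar d_\Omega(w)=-d(w,\Co\Omega)$ for $w\in\Omega$ and $\bar d_\Omega(w)=d(w,\Omega)$ for $w\in\Co\Omega$, this yields the required strict inequalities $\bar d_\Omega(w)<\delta$ or $\bar d_\Omega(w)>\delta$. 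Putting the cases together completes the proof.
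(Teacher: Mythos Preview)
Your argument is correct: the balls $B_{2r_0+\delta}(z_{int})$ and $B_{2r_0-\delta}(z_{ext})$ are indeed interior and exterior tangent balls to $\Omega_\delta$ at $x$, of radius $2r_0\pm\delta\ge r_0$, and your case analysis (easy case vs.\ annular case) goes through. One very minor quibble: from $B_{|\delta|}(w)\subset\Omega$ you only get $d(w,\partial\Omega)\ge|\delta|$, not the strict inequality; but since $|w-z_{int}|<2r_0+\delta$ is strict you can fit a slightly larger ball $B_{|\delta|+\eps}(w)$, so this is easily repaired.

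The paper takes a shorter route that avoids your case split entirely. Instead of keeping the original centers $z_{int},z_{ext}$ and adjusting the radii, it keeps the radius equal to $r_0$ and moves the center to $x_0:=x\mp r_0\,\nu_{\Omega_\delta}(x)=\pi(x)\mp(r_0\mp\delta)\,\nu_\Omega(\pi(x))$. The discussion before Remark~\ref{c21} gives $\bar d_\Omega(x_0)=\delta\mp r_0$ (since $|r_0\mp\delta|\le 2r_0$), and then the $1$-Lipschitz property of $\bar d_\Omega$ yields, for every $z\in B_{r_0}(x_0)$,
\[
\bar d_\Omega(z)\ \lessgtr\ \bar d_\Omega(x_0)\pm|z-x_0|\ \lessgtr\ (\delta\mp r_0)\pm r_0=\delta,
\]
in one line, with no need to distinguish whether $z$ lies inside or outside the original $2r_0$-ball. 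In fact the same Lipschitz trick would collapse your argument too: since $\bar d_\Omega(z_{int})=-2r_0$, any $w\in B_{2r_0+\delta}(z_{int})$ satisfies $\bar d_\Omega(w)<-2r_0+(2r_0+\delta)=\delta$. So your proof is valid and even yields the sharper radii $2r_0\pm\delta$, but the paper's use of the signed distance function makes the verification essentially a one-liner.
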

\begin{proof}
Take for example $\delta\in[-r_0,0)$ and let $x\in\partial\Omega_\delta=\{\bar{d}_\Omega=\delta\}$.
We show that $\Omega_\delta$ has an interior tangent ball of radius $r_0$ at $x$. The other cases are proven in a similar way.

Consider the projection $\pi(x)\in\partial\Omega$ and the point
\[x_0:=x-r_0\nabla\bar{d}_\Omega(x)=\pi(x)-(r_0+|\delta|)\nu_\Omega(\pi(x)).\]
Then
\[B_{r_0}(x_0)\subset\Omega_\delta\quad\textrm{ and }\quad x\in\partial B_{r_0}(x_0)\cap\partial\Omega_\delta.\]
Indeed, notice that, as remarked above,
\[d(x_0,\partial\Omega)=|x_0-\pi(x)|=r_0+|\delta|.\]
Thus, by the triangle inequality we have that
\[d(z,\partial\Omega)\ge d(x_0,\partial\Omega)-|z-x_0|>|\delta|,\qquad\textrm{ for every }z\in B_{r_0}(x_0),\]
so $B_{r_0}\subset\Omega_\delta$. Moreover, by definition of $x_0$ we have
\[x\in\partial B_{r_0}(x_0)\cap\partial\Omega_\delta\]
and the desired result follows.
\end{proof}

To conclude, we remark that the sets $\overline{\Omega_{-\delta}}$ are retracts of $\Omega$, for every $\delta\in(0,r_0]$.
Indeed, roughly speaking, each set $\overline{\Omega_{-\delta}}$ is obtained by deforming $\Omega$ in normal direction,
towards the interior.
An important consequence is that if $\Omega$ is connected then $\overline{\Omega_{-\delta}}$ is path connected.

To be more precise, we have the following:

\begin{prop}\label{retract}
Let $\Omega\subset\Rn$ be a bounded open set with $C^2$ boundary.
Let $\delta\in(0,r_0]$ and define
\[\mathcal D:\Omega\longrightarrow\overline{\Omega_{-\delta}},\qquad\mathcal D(x):=
\left\{\begin{split}&x,& &x\in\Omega_{-\delta},\\
&x-\big(\delta+\bar{d}_\Omega(x)\big)\nabla\bar{d}_\Omega(x),& & x\in\Omega\setminus\Omega_{-\delta}.\end{split}\right.\]
Then $\mathcal D$ is a retraction of $\Omega$ onto $\overline{\Omega_{-\delta}}$, i.e. it is continuous and
$\mathcal D(x)=x$ for every $x\in\overline{\Omega_{-\delta}}$.
In particular, if $\Omega$ is connected, then $\overline{\Omega_{-\delta}}$ is path connected.
\end{prop}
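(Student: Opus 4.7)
The plan is to reduce the verification to a short computation using the nearest-point projection $\pi$ on the tubular neighborhood, and then to invoke the gluing lemma for continuity. First I would observe that for $x\in\Omega\setminus\Omega_{-\delta}$ one has $-\delta\le\bar{d}_\Omega(x)<0$, so $x\in N_{2r_0}(\partial\Omega)$ and $\bar{d}_\Omega$ is $C^2$ near $x$ by \eqref{r01}; in particular $\nabla\bar{d}_\Omega(x)=\nu_\Omega(\pi(x))$ is well defined. Using $x=\pi(x)+\bar{d}_\Omega(x)\nabla\bar{d}_\Omega(x)$, a direct substitution gives the clean rewriting
\[\mathcal D(x)=\pi(x)-\delta\,\nu_\Omega(\pi(x)),\qquad x\in\Omega\setminus\Omega_{-\delta}.\]
From the discussion in Appendix \ref{A2} (with $\delta\le r_0<2r_0$), the point $\pi(x)-\delta\,\nu_\Omega(\pi(x))$ satisfies $\bar{d}_\Omega=-\delta$, so it lies on $\partial\Omega_{-\delta}\subset\overline{\Omega_{-\delta}}$. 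Together with the trivial fact that $\mathcal D(x)=x\in\overline{\Omega_{-\delta}}$ for $x\in\Omega_{-\delta}$, this shows that $\mathcal D$ is well-defined as a map $\Omega\to\overline{\Omega_{-\delta}}$.

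Next I would verify the retraction and continuity properties. On the overlap set $\{\bar{d}_\Omega=-\delta\}=\partial\Omega_{-\delta}$, the second branch of the definition reduces to $x-0\cdot\nabla\bar{d}_\Omega(x)=x$, which matches the first branch, and by Remark \ref{c21} we have $\overline{\Omega_{-\delta}}=\{\bar{d}_\Omega\le-\delta\}$, so $\mathcal D(x)=x$ for every $x\in\overline{\Omega_{-\delta}}$. For continuity, the first branch is the identity (hence continuous) on the open set $\Omega_{-\delta}$; the second branch is continuous on $\Omega\setminus\Omega_{-\delta}$ because $\bar{d}_\Omega$ and $\nabla\bar{d}_\Omega$ are continuous on $N_{2r_0}(\partial\Omega)\supset\Omega\setminus\Omega_{-\delta}$. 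Since the two expressions agree along the common boundary $\partial\Omega_{-\delta}$, the gluing lemma for closed/open decompositions yields continuity of $\mathcal D$ on all of $\Omega$.

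Finally, for the path-connectedness consequence, I would use that a bounded connected open subset of $\R^n$ is path-connected. Given any $x_1,x_2\in\overline{\Omega_{-\delta}}\subset\Omega$, choose a continuous path $\gamma:[0,1]\to\Omega$ joining them; then $\mathcal D\circ\gamma:[0,1]\to\overline{\Omega_{-\delta}}$ is a continuous path with $\mathcal D(\gamma(i))=\gamma(i)=x_{i+1}$ for $i=0,1$, proving that $\overline{\Omega_{-\delta}}$ is path-connected. There is no real obstacle here beyond being careful about the domain on which $\nabla\bar{d}_\Omega$ is defined and about the identification $\overline{\Omega_{-\delta}}=\{\bar{d}_\Omega\le-\delta\}$; the assumption $\delta\le r_0$ is exactly what guarantees both.
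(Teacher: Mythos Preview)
Your proof is correct and follows essentially the same route as the paper's own argument: both use the identity $\mathcal D(x)=\pi(x)-\delta\,\nu_\Omega(\pi(x))$ on $\Omega\setminus\Omega_{-\delta}$ to see that the image lands on $\partial\Omega_{-\delta}$, check that the two branches agree on $\partial\Omega_{-\delta}$ to obtain continuity, and then deduce path-connectedness from the fact that a retract of a path-connected space is path-connected. Your write-up is in fact a bit more careful than the paper's, making explicit why $\nabla\bar d_\Omega$ is defined on $\Omega\setminus\Omega_{-\delta}$ and spelling out the gluing argument.
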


\begin{proof}
Notice that the function
\[\Phi(x):=x-\big(\delta+\bar{d}_\Omega(x)\big)\nabla\bar{d}_\Omega(x)\]
is continuous in $\Omega\setminus\Omega_{-\delta}$ and $\Phi(x)=x$ for every $x\in\partial\Omega_{-\delta}$.
Therefore the function $\mathcal D$ is continuous.

We are left to show that
\[\mathcal D(\Omega\setminus\Omega_{-\delta})\subset\partial\Omega_{-\delta}.\]
For this, it is enough to notice that
\[\mathcal D(x)=\pi(x)-\delta\nu_\Omega(\pi(x))\qquad\textrm{for every }x\in\Omega\setminus\Omega_{-\delta}.\]
To conclude, suppose that $\Omega$ is connected and recall that if an open set $\Omega\subset\R^n$ is connected, then it is also path connected.
Thus $\overline{\Omega_{-\delta}}$,
being the continuous image of a path connected space, is itself
path connected.
\end{proof}   

\section{Collection of other useful results on nonlocal minimal surfaces}\label{appendicite2}

Here, we collect some auxiliary results on nonlocal minimal surfaces.
In particular, we recall the representation of
the fractional mean curvature when the set is a graph and
a useful and general version of the maximum principle.

\subsection{Explicit formulas for the fractional mean curvature of a graph}
We denote 
\[Q_{r,h}(x):=B'_r(x')\times(x_n-h,x_n+h),\] for $x\in\R^n,$ $r,h>0$. If $x=0$, we write $Q_{r,h}:=Q_{r,h}(0)$. Let also 
\[g_s(t):=\frac{1}{(1+t^2)^\frac{n+s}{2}}\qquad\textrm{and}\qquad G_s(t):=\int_0^tg_s(\tau)\,d\tau.\]
Notice that
\[0<g_s(t)\leq1,\quad\forall\,t\in\R\qquad\textrm{and}\qquad\int_{-\infty}^{+\infty}g_s(t)\,dt<\infty,\]
for every $s\in(0,1)$.

In this notation, we can write the fractional mean curvature of a graph as follows:

\begin{prop}
Let $F\subset\R^n$ and $p\in\partial F$ such that
\[F\cap Q _{r,h}(p)=\{(x',x_n)\in\R^n\,|\,x'\in B'_r(p'),\,v(x')<x_n<p_n+h\},\]
for some $v\in C^{1,\alpha}(\overline{B}'_r(p'))$. Then for every $s\in(0,\alpha)$
\eqlab{\label{complete_curv_formula}\I_s[F](p)&
=2\int_{B'_r(p')}\Big\{G_s\Big(\frac{v(y')-v(p')}{|y'-p'|}\Big)
-G_s\Big(\nabla v(p')\cdot\frac{y'-p'}{|y'-p'|}\Big)\Big\}\frac{dy'}{|y'-p'|^{n-1+s}}\\
&
\qquad\qquad+\int_{\R^n\setminus Q_{r,h}(p)}\frac{\chi_{\Co F}(y)-\chi_F(y)}{|y-p|^{n+s}}\,dy.}
\end{prop}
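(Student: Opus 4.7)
The plan is to split the defining integral of $\I_s[F](p)$ into the contribution over $Q_{r,h}(p)$ and over its complement. By translation invariance we may assume $p = 0$ and, by the graph hypothesis, $v(0) = 0$. The complement contribution matches the second term of \eqref{complete_curv_formula} as it stands; the entire work lies in reducing the $Q_{r,h}$ contribution to the first term, handling the principal value carefully.

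On $Q_{r,h}$, for each fixed $y' \in B'_r$ the slice $\{y_n : (y',y_n) \in Q_{r,h}\}$ splits into $F$ and $\Co F$ exactly at the height $v(y')$. The first step is to perform the inner integration in $y_n$ after the change of variable $y_n = |y'|\,t$, using
\[
\frac{1}{(|y'|^2 + y_n^2)^{\frac{n+s}{2}}}\, dy_n = \frac{1}{|y'|^{n-1+s}}\,g_s(t)\,dt.
\]
This yields, for each $y' \neq 0$,
\[
\int_{-h}^{h} \frac{\chi_{\Co F}(y',y_n) - \chi_F(y',y_n)}{(|y'|^2+y_n^2)^{\frac{n+s}{2}}}\,dy_n
= \frac{1}{|y'|^{n-1+s}}\left[\int_{-h/|y'|}^{v(y')/|y'|} g_s(t)\,dt - \int_{v(y')/|y'|}^{h/|y'|} g_s(t)\,dt\right].
\]
Since $g_s$ is even, the contributions from $\pm h/|y'|$ cancel and the bracket collapses to $2\,G_s\!\left(v(y')/|y'|\right)$. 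This is the key algebraic simplification.

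Next I will handle the principal value. By the definition of $\I_s$, the $Q_{r,h}$ contribution must be interpreted as
\[
\lim_{\rho \to 0^+} \int_{Q_{r,h}\setminus B_\rho} \frac{\chi_{\Co F}(y) - \chi_F(y)}{|y|^{n+s}}\,dy.
\]
For $\rho$ small enough, Fubini and the computation above reduce this (up to a vanishing remainder coming from the spherical cap $B_\rho \setminus (B'_\rho \times \R)$, which is easy to estimate) to
\[
2\lim_{\rho \to 0^+} \int_{B'_r \setminus B'_\rho} \frac{G_s(v(y')/|y'|)}{|y'|^{n-1+s}}\,dy'.
\]
The odd symmetry $G_s(-\tau) = -G_s(\tau)$ together with the evenness of the annulus $B'_r \setminus B'_\rho$ gives
\[
\int_{B'_r \setminus B'_\rho} \frac{G_s(\nabla v(0)\cdot y'/|y'|)}{|y'|^{n-1+s}}\,dy' = 0
\]
for every $\rho$, so I may subtract this inside the integral for free.

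The final step is to show that the difference $G_s(v(y')/|y'|) - G_s(\nabla v(0)\cdot y'/|y'|)$ produces an absolutely integrable integrand, allowing me to drop the limit and pass to the full integral over $B'_r$. Since $\|g_s\|_\infty \le 1$ and $v \in C^{1,\alpha}(\overline{B}'_r)$, the Mean Value Theorem yields
\[
\left|G_s\!\left(\tfrac{v(y')}{|y'|}\right) - G_s\!\left(\nabla v(0)\cdot\tfrac{y'}{|y'|}\right)\right|
\le \frac{|v(y') - v(0) - \nabla v(0)\cdot y'|}{|y'|} \le [\nabla v]_{C^{0,\alpha}}\,|y'|^{\alpha},
\]
so the integrand is bounded by a multiple of $|y'|^{\alpha - (n-1+s)}$, which is locally integrable on $B'_r$ precisely because $s < \alpha$. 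Dominated convergence then gives \eqref{complete_curv_formula}.

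The only delicate point is the principal value manipulation, since the naive integral of each term in the bracket individually diverges at the origin; the clean way is to introduce the cancellation by $G_s(\nabla v(0)\cdot y'/|y'|)$ \emph{before} sending $\rho \to 0$, using its vanishing integral over symmetric annuli, and only afterwards invoking the $C^{1,\alpha}$ bound above to justify the limit.
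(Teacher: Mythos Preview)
The paper does not prove this proposition; it only cites external references (\cite{regularity}, \cite{Abaty}, \cite{lukes}, and \cite{bootstrap} for the case $\nabla v(p)\neq0$). Your overall strategy---split off $\Co Q_{r,h}$, slice in $y_n$, and use the oddness of $G_s$ to insert the tangent-plane term---is the standard one and is essentially correct.

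There is, however, one genuine gap. The ``spherical cap'' remainder you dismiss as easy is not. First, the set you wrote, $B_\rho\setminus(B'_\rho\times\R)$, is empty since $|y'|\le|y|$; the region you actually need to control is
\[
D_\rho:=\big(B'_\rho\times(-h,h)\big)\setminus B_\rho,
\]
the part of the thin cylinder lying outside the ball. More importantly, a direct bound on $\int_{D_\rho}(\chi_{\Co F}-\chi_F)|y|^{-n-s}\,dy$ does \emph{not} go to zero when $\nabla v(0)\neq0$: after the slice computation, the nonzero contribution comes from those $y'$ with $|v(y')|>\sqrt{\rho^2-|y'|^2}$, and since $|v(y')|\sim|\nabla v(0)|\,|y'|$ this forces $|y'|\gtrsim\rho$, producing an annulus of width $\sim\rho$ on which the resulting integral is of order $\rho^{-s}$, which diverges.

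The fix is to introduce the tangent half-space cancellation \emph{before} passing from the ball to the cylinder, not after. Since $Q_{r,h}\setminus B_\rho$ is symmetric under $y\mapsto-y$ and $\chi_{\Co H}-\chi_H$ is odd for the half-space $H=\{y_n>\nabla v(0)\cdot y'\}$, one has
\[
\int_{Q_{r,h}\setminus B_\rho}\frac{\chi_{\Co F}-\chi_F}{|y|^{n+s}}\,dy
=\int_{Q_{r,h}\setminus B_\rho}\frac{(\chi_{\Co F}-\chi_F)-(\chi_{\Co H}-\chi_H)}{|y|^{n+s}}\,dy.
\]
The integrand on the right is supported in the thin region between the graph of $v$ and its tangent plane, so your $C^{1,\alpha}$ estimate shows it is dominated by $|y'|^{\alpha-n+1-s}$, hence absolutely integrable on all of $Q_{r,h}$. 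Dominated convergence then removes the excision of $B_\rho$ directly, and a single Fubini computation recovers the first term of \eqref{complete_curv_formula}. This replaces your two separate steps (cap remainder, then annular subtraction) by one, and closes the gap.
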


This explicit formula was introduced in \cite{regularity} (see also \cite{Abaty,lukes}) when $\nabla v(p)=0$. In \cite{bootstrap}, the reader can find the formula for the case of non-zero gradient. 

\begin{remark}
In the right hand side of \eqref{complete_curv_formula}
there is no need to consider the principal value, since the integrals are summable.
Indeed,
\bgs{\label{Holder_useful}
\Big|G_s\Big(&\frac{v(y')-v(p')}{|y'-p'|}\Big)
-G_s\Big(\nabla v(p')\cdot\frac{y'-p'}{|y'-p'|}\Big)\Big|
=\Big|\int_{\nabla v(p')\cdot\frac{y'-p'}{|y'-p'|}}^{\frac{v(y')-v(p')}{|y'-p'|}}g_s(t)\,dt\Big|\\
&
\leq\Big|\frac{v(y')-v(p')-\nabla v(p')\cdot(y'-p')}{|y'-p'|}\Big|\leq \|v\|_{C^{1,\alpha}(\overline B'_r(p'))}|y'-p'|^\alpha,
}
for every $y'\in B'_r(p')$.
As for the last inequality, notice that by the Mean value Theorem we have
\[v(y')-v(p')=\nabla v(\xi)\cdot(y'-p'),\]
for some $\xi\in B'_r(p')$ on the segment with end points $y'$ and $p'$. Thus
\bgs{|v(y')-v(p')&-\nabla v(p')\cdot(y'-p')|=|(\nabla v(\xi)-\nabla v(p'))\cdot(y'-p')|\\
&
\leq|\nabla v(\xi)-\nabla v(p')||y'-p'|\leq\|\nabla v\|_{C^{0,\alpha}(\overline B'_r(p'))}|\xi-p'|^\alpha|y'-p'|\\
&
\leq\|v\|_{C^{1,\alpha}(\overline B'_r(p'))}|y'-p'|^{1+\alpha}.
}
We denote for simplicity
\eqlab{ \label{mathcalg} \mathcal G(s,v,y',p'):= G_s\Big(&\frac{v(y')-v(p')}{|y'-p'|}\Big)
-G_s\Big(\nabla v(p')\cdot\frac{y'-p'}{|y'-p'|}\Big).}
With this notation, we have
\eqlab{\label{Holder_useful} |\mathcal G(s,v,y',p')|  \leq \|v\|_{C^{1,\alpha}(\overline B'_r(p'))}|y'-p'|^\alpha.}
\end{remark}


\subsection{Interior regularity theory and
its influence on the Euler-Lagrange equation inside the domain}\label{brr2}

In this Appendix we give a short review of the the Euler-Lagrange equation 
in the interior of the domain. In particular, by exploiting results which give an improvement of
the regularity of $\partial E$, we show that an $s$-minimal set is a classical solution
of the Euler-Lagrange equation almost everywhere.

First of all, we recall the definition of supersolution.
\begin{defn}
Let $\Omega\subset\R^n$ be an open set and let $s\in(0,1)$. A set $E$
is an $s$-supersolution in $\Omega$ if $P_s(E,\Omega)<\infty$ and
\begin{equation}\label{supersolution}
P_s(E,\Omega)\leq P_s(F,\Omega)\quad\textrm{for every set }E\textrm{ s.t. }E\subset F\textrm{ and }F\setminus\Omega=E\setminus\Omega.
\end{equation}
\end{defn}
We remark that \eqref{supersolution} is equivalent to
\begin{equation*}
A\subset\Co E\cap\Omega\qquad\Longrightarrow\qquad \Ll_s(A,E)-\Ll_s(A,\Co(E\cup A))\leq0.
\end{equation*}
In a similar way one defines $s$-subsolutions.

In \cite{nms} it is shown that a set $E$ which is an $s$-supersolution in $\Omega$
is also a viscosity supersolution of the equation $\I_s[E]=0$ on $\partial E\cap\Omega$.
To be more precise

\begin{theorem}[Theorem 5.1 of \cite{nms}]\label{viscsol}
Let $E$ be an $s$-supersolution in the open set $\Omega$. If $x_0\in\partial E\cap \Omega$ and $E$ has an interior tangent ball
at $x_0$, contained in $\Omega$,
i.e.
\begin{equation*}
B_r(y)\subset E\cap\Omega\quad\textrm{s.t.}\quad x_0\in\partial E\cap\partial B_r(y),
\end{equation*}
then
\begin{equation}\label{supersolution_ineq}
\liminf_{\rho\to0^+}\I_s^\rho[E](x_0)\geq0.
\end{equation}

\end{theorem}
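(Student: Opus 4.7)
My plan is to argue by contradiction. Suppose that $\liminf_{\rho\to 0^+}\I_s^\rho[E](x_0) = -2\eta$ for some $\eta > 0$. The goal is to produce a competitor $F\supset E$ with $F\setminus\Omega = E\setminus\Omega$ such that $P_s(F,\Omega) < P_s(E,\Omega)$, contradicting the supersolution inequality \eqref{supersolution}. The natural candidate is
\begin{equation*}
F_\delta := E \cup B_\delta(x_0), \qquad A_\delta := B_\delta(x_0)\setminus E,
\end{equation*}
for $\delta > 0$ so small that $B_\delta(x_0)\subset\Omega$. The added mass $A_\delta$ has positive measure, since $x_0\in\partial E$ forces both $E$ and $\Co E$ to be locally present in measure, and $F_\delta$ is an admissible supersolution competitor.

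A direct computation using the definition \eqref{nmspf1} gives
\begin{equation*}
P_s(F_\delta,\Omega) - P_s(E,\Omega) = \Ll_s(A_\delta,\Co E\setminus B_\delta(x_0)) - \Ll_s(A_\delta, E).
\end{equation*}
For each $x\in A_\delta$ I would split the inner integrand according to whether $y$ lies inside or outside $B_\delta(x_0)$. The ``far-field'' piece, integrated over $\Co B_\delta(x_0)$, is close to $\I_s^\delta[E](x_0)$ up to an error controlled by the displacement $|x-x_0|\le\delta$; the ``near-field'' piece reduces to $-\int_{E\cap B_\delta(x_0)}|x-y|^{-n-s}\,dy$. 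Rescaling $x = x_0+\delta w$, $y = x_0+\delta z$, the whole expression becomes a $\delta$-independent dimensionless integral against the rescaled set $(E-x_0)/\delta$, multiplied by an appropriate power of $\delta$ and by $|A_\delta|$.

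The interior tangent ball $B_r(y)\subset E$ is the crucial geometric input. After rescaling by $\delta$, the ball $(B_r(y)-x_0)/\delta$ has radius $r/\delta\to\infty$ and is tangent to the origin from one side, so under local $L^1$ convergence any blow-up limit of $(E-x_0)/\delta$ contains the half-space $\{w\cdot\nu<0\}$. The rescaled near-field integral then converges to a fixed finite value and, by reflection through the origin, exactly cancels the symmetric part of the rescaled far-field integrand; this is what resolves the nominal $\infty-\infty$ ambiguity and identifies the surviving contribution as precisely $|A_\delta|\,\I_s^\delta[E](x_0) + o(|A_\delta|)$. Choosing $\delta=\rho_k$ along a subsequence realizing the negative liminf, the supersolution inequality would force $|A_{\rho_k}|(-2\eta + o(1)) \geq 0$ for $k$ large, the desired contradiction.

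The main obstacle is making the singular cancellation between the two individually divergent pieces precise and uniform in $\delta$. The reflection symmetry through $x_0$ provided by the interior tangent ball is the conceptually right mechanism, but a rigorous implementation is probably cleanest via the blow-up route: the rescaled sets $(E-x_0)/\delta$ are $s$-supersolutions on diverging balls, contain an interior ball of diverging radius, and any weak limit is a half-space, for which the Euler-Lagrange inequality is immediate by antisymmetry of the kernel; transferring positivity back to $E$ requires a uniform-in-$\delta$ control of the far-field integrand (which follows from the fact that it depends continuously on $y\in \Co B_\delta(x_0)$ through a kernel that is uniformly comparable to $|x_0-y|^{-n-s}$). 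Once that uniform control is in place, the contradiction above is clean.
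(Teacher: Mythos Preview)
The paper does not prove this statement itself; it is quoted verbatim as Theorem~5.1 of \cite{nms} and used as a black box throughout Appendix~\ref{brr2}.

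Your overall strategy (build an outward competitor and compare perimeters) is the right one, but the specific competitor $F_\delta=E\cup B_\delta(x_0)$ does not produce the asymptotic you claim. Test it on $E=\{x_n<0\}$ with $x_0=0$: by scale invariance of the half-space,
\[
P_s(F_\delta,\Omega)-P_s(E,\Omega)=\delta^{n-s}\big(P_s(F_1,\Omega')-P_s(E,\Omega')\big)=c\,\delta^{n-s}
\]
with $c>0$ (the half-space is a strict minimizer, so adding a bump strictly increases perimeter), whereas your claimed expansion gives $|A_\delta|\,\I_s^\delta[E](0)+o(|A_\delta|)=o(\delta^n)$. The discrepancy is not an error term: adding the full ball $B_\delta(x_0)$ creates new boundary along the spherical cap $\partial B_\delta(x_0)\cap\Co E$, and this cap contributes a fixed positive amount of rescaled perimeter that has nothing to do with $\I_s^\delta[E](x_0)$ and dominates it by a factor $\delta^{-s}$. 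Your reflection heuristic only sees the part of $\partial F_\delta$ that lies along $\partial E$, not this extra cap.

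The argument in \cite{nms} avoids this by using the interior tangent ball $B_r(y)$ itself as the perturbation device: one translates it outward by $\epsilon$ along $\nu=(x_0-y)/r$ and takes $A_\epsilon=B_r(y+\epsilon\nu)\setminus E$, a thin crescent whose boundary is tangent to $\partial E$ (no protruding cap). For points $x\in A_\epsilon$ one can then compare $\int_E|x-y|^{-n-s}\,dy$ and $\int_{\Co(E\cup A_\epsilon)}|x-y|^{-n-s}\,dy$ to the corresponding integrals for the ball $B_r(y)$ and its translate, and extract $\I_s^\rho[E](x_0)$ with a genuinely lower-order remainder. Your blow-up sketch in the last paragraph is closer in spirit to this, but ``transferring positivity back to $E$'' is exactly where the crescent geometry is needed and cannot be replaced by soft compactness alone.
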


In particular, $E$ is a viscosity supersolution in the following sense.

\begin{corollary}
Let $E$ be an $s$-supersolution in the open set $\Omega$ and let $F$ be an open set such that $F\subset E$.
If $x\in(\partial E\cap\partial F)\cap\Omega$ and $\partial F$ is $C^{1,1}$ near $x$,
then $\I_s[F](x)\geq0$.
\end{corollary}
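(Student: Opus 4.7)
The plan is to reduce to Theorem~\ref{viscsol} via two observations: an interior tangent ball to $F$ is automatically an interior tangent ball to $E$, and $F\subset E$ yields a pointwise monotonicity of the integrand defining $\I_s^\rho$.

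First I would use the $C^{1,1}$ regularity of $\partial F$ near $x$ to produce an interior tangent ball to $F$ at $x$. Since $\partial F$ satisfies a uniform interior ball condition on a neighborhood, there exists $r>0$ and $y\in F$ with $B_r(y)\subset F$ and $x\in\partial B_r(y)\cap\partial F$. Because $x\in\Omega$ and $\Omega$ is open, shrinking $r$ if necessary we may also arrange that $B_r(y)\subset\subset\Omega$. Since $F\subset E$, this ball satisfies
\[B_r(y)\subset E\cap\Omega,\qquad x\in\partial E\cap\partial B_r(y),\]
so it is an interior tangent ball to $E$ at $x$, contained in $\Omega$. Applying Theorem~\ref{viscsol} to the supersolution $E$ at the point $x$ then gives
\[\liminf_{\rho\to0^+}\I_s^\rho[E](x)\ge 0.\]

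Next I would transfer this inequality from $E$ to $F$. The inclusion $F\subset E$ gives $\Co E\subset\Co F$, hence the pointwise bound
\[\chi_{\Co E}(y)-\chi_E(y)\le\chi_{\Co F}(y)-\chi_F(y)\qquad\text{for a.e. }y\in\R^n.\]
Dividing by $|y-x|^{n+s}$ and integrating over $\Co B_\rho(x)$ yields, for every $\rho>0$,
\[\I_s^\rho[E](x)\le\I_s^\rho[F](x).\]

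Finally, since $\partial F$ is $C^{1,1}$ (in particular $C^{1,\alpha}$ with $\alpha=1>s$) near $x$, the principal value $\I_s[F](x)$ exists as a proper limit, so formula \eqref{complete_curv_formula} (or the $C^{1,\alpha}$ summability estimate in \eqref{Holder_useful}) gives
\[\lim_{\rho\to0^+}\I_s^\rho[F](x)=\I_s[F](x).\]
Combining this with the two displays above,
\[0\le\liminf_{\rho\to0^+}\I_s^\rho[E](x)\le\liminf_{\rho\to0^+}\I_s^\rho[F](x)=\I_s[F](x),\]
which is the desired conclusion. There is no real obstacle here; the only minor point to be careful about is making sure the interior tangent ball from the $C^{1,1}$ condition can be chosen small enough to sit inside $\Omega$ so that Theorem~\ref{viscsol} applies, which is immediate from $x\in\Omega$ open.
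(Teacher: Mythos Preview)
Your proof is correct and follows essentially the same route as the paper: produce an interior tangent ball to $F$ (hence to $E$) from the $C^{1,1}$ regularity, apply Theorem~\ref{viscsol} to get $\liminf_{\rho\to0^+}\I_s^\rho[E](x)\ge0$, and then use the pointwise monotonicity $\I_s^\rho[E](x)\le\I_s^\rho[F](x)$ together with the well-definedness of $\I_s[F](x)$ to conclude. Your added care in shrinking the tangent ball so that it lies inside $\Omega$ is a nice clarification but not a departure from the paper's argument.
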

\begin{proof}
Since $\partial F$ is $C^{1,1}$ near $x$, $F$ has an interior tangent ball at $x$. In particular, notice that this ball
is tangent also to $E$ at $x$ (from the inside).
Thus by Theorem \ref{viscsol}
\begin{equation*}
\liminf_{\rho\to0^+}\I_s^\rho[E](x)\geq0.
\end{equation*}
Now notice that
\begin{equation*}
F\subset E\qquad\Longrightarrow\qquad\chi_{\Co F}-\chi_F\geq\chi_{\Co E}-\chi_E,
\end{equation*}
so
\begin{equation*}
\I_s^\delta[F](x)\geq\I_s^\delta[E](x)\qquad\forall\,\delta>0.
\end{equation*}
Since $\I_s[F](x)$ is well defined, it is then enough to pass to the limit $\delta\to0$.
\end{proof}

\begin{remark}
Similarly, for an $s$-subsolution $E$ which has an exterior tangent ball at $x_0$ we obtain
\begin{equation}\label{subsolution_ineq}
\limsup_{\rho\to0^+}\I_s^\rho[E](x_0)\leq0.
\end{equation}
\end{remark}

Now we recall the following two regularity results.
If $E$ is $s$-minimal, having a tangent ball (either interior or exterior) at some point $x_0\in\partial E\cap\Omega$ is enough (via an improvement of flatness result) to have $C^{1,\alpha}$ regularity in a neighborhood of $x_0$ (see Corollary 6.2 of \cite{nms}).
%
Moreover, bootstrapping arguments prove that $C^{0,1}$ regularity
guarantees $C^\infty$ regularity (according to Theorem 1.1 of \cite{Bernstein}).

%

It is also convenient to introduce the notion of locally $s$-minimal set, which is useful when considering an unbounded domain $\Omega$.\\
We say that a set $E\subset\R^n$ is locally $s$-minimal in an open set $\Omega\subset\R^n$ if $E$ is $s$-minimal in every bounded
open set $\Omega'\subset\subset\R^n$.

Exploiting the regularity results that we recalled above, we obtain the following:
\begin{theorem}\label{EL_inside}
Let $\Omega\subset\R^n$ be an open set and let $E$ be locally $s$-minimal in $\Omega$. If $x_0\in\partial E\cap\Omega$
and $E$ has either an interior or exterior tangent ball at $x_0$,
then there exists $r>0$ such that $\partial E\cap B_r(x_0)$
is $C^\infty$ and
\begin{equation}\label{EL_eq_inside}
\I_s[E](x)=0\qquad\textrm{for every}\quad x\in\partial E\cap B_r(x_0).
\end{equation}
\end{theorem}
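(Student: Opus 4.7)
The plan is to combine the improvement of flatness, a bootstrap regularity result, and the viscosity formulation of the Euler--Lagrange equation recalled just before the statement.

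First, I would reduce the problem to a local one. Since $E$ is locally $s$-minimal in $\Omega$ and $x_0\in\Omega$, I can fix a bounded open set $\Omega'\subset\subset\Omega$ with $x_0\in\Omega'$, so that $E$ is $s$-minimal in $\Omega'$, and in particular both an $s$-supersolution and an $s$-subsolution in $\Omega'$. The tangent ball at $x_0$ (interior or exterior) can be shrunk so as to remain inside $\Omega'$.

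Next, I would upgrade regularity in two stages. By the improvement of flatness (Corollary 6.2 of \cite{nms}) applied at $x_0$, the existence of a tangent ball to $E$ at $x_0$ forces $\partial E$ to be $C^{1,\alpha}$ in a neighborhood $B_{r_1}(x_0)$, for some $r_1>0$ and some $\alpha\in(0,1)$. Then, by the Schauder-type bootstrap for the fractional mean curvature equation (Theorem 1.1 of \cite{Bernstein}), $C^{1,\alpha}$ regularity upgrades to $C^\infty$: there exists $r\in(0,r_1]$ such that $\partial E\cap B_r(x_0)$ is $C^\infty$.

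Finally, I would derive the Euler--Lagrange equation pointwise on $\partial E\cap B_r(x_0)$. At any such point $x$, the smoothness of $\partial E$ provides both an interior and an exterior tangent ball to $E$ (contained in $\Omega'$, up to shrinking $r$). Since $E$ is simultaneously an $s$-supersolution and an $s$-subsolution in $\Omega'$, Theorem \ref{viscsol} (via the inequalities \eqref{supersolution_ineq} and \eqref{subsolution_ineq}) yields
\[
\liminf_{\rho\to0^+}\I_s^\rho[E](x)\ge 0\qquad\mbox{and}\qquad\limsup_{\rho\to0^+}\I_s^\rho[E](x)\le 0.
\]
Since $\partial E$ is $C^\infty$ near $x$, the principal value $\I_s[E](x)=\lim_{\rho\to0^+}\I_s^\rho[E](x)$ actually exists as a classical limit (the local contribution near $x$ can be controlled by the formula \eqref{complete_curv_formula}, whose singular part is integrable once $\partial E$ is, say, $C^{1,\alpha}$ with $\alpha>s$; smoothness makes this automatic). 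Combining the two inequalities therefore gives $\I_s[E](x)=0$ for every $x\in\partial E\cap B_r(x_0)$, which is \eqref{EL_eq_inside}.

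The main obstacle is conceptual rather than computational: one must invoke the correct black boxes in the right order (flatness $\Rightarrow$ $C^{1,\alpha}$, then bootstrap $\Rightarrow$ $C^\infty$) and check that the pointwise equation follows from the two one-sided viscosity inequalities once the boundary is smooth enough that the principal value converges. Everything else is a direct application of the results cited in this appendix.
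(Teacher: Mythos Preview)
Your proposal is correct and follows essentially the same approach as the paper's proof: localize to a bounded subdomain, apply Corollary~6.2 of \cite{nms} followed by Theorem~1.1 of \cite{Bernstein} to obtain $C^\infty$ regularity near $x_0$, and then combine the viscosity super- and subsolution inequalities \eqref{supersolution_ineq}--\eqref{subsolution_ineq} at every smooth boundary point to conclude $\I_s[E](x)=0$. The only cosmetic difference is that the paper takes $\Omega'=B_r(x_0)$ directly rather than a generic compactly contained open set.
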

\begin{proof}
Since $x_0\in\partial E\cap \Omega$ and $\Omega$ is open, we can find $r>0$ such that $B_r(x_0)\subset\subset\Omega$.\\
The set $E$ is then $s$-minimal in $B_r(x_0)$. Moreover, by hypothesis we have a tangent ball (either interior or exterior)
to $E$ at $x_0$. Also notice that we can suppose that the tangent ball is contained in $B_r(x_0)$.\\
Thus, by Corollary 6.2 of \cite{nms} and Theorem 1.1 of \cite{Bernstein}, we know that $\partial E$ is $C^\infty$ in
$B_r(x_0)$ (up to taking another $r>0$ small enough).

In particular, $\I_s[E](x)$ is well defined for every $x\in\partial E\cap B_r(x_0)$ and $E$ has both an interior and an exterior tangent
ball at every $x\in\partial E\cap B_r(x_0)$ (both contained in $B_r(x_0)$).\\
Therefore, since an $s$-minimal set is both an $s$-supersolution and an $s$-subsolution, by
\eqref{supersolution_ineq} and \eqref{subsolution_ineq}, we obtain
\begin{equation*}
0\leq\liminf_{\rho\to0^+}\I_s^\rho[E](x)=\I_s[E](x)=\limsup_{\rho\to0^+}\I_s^\rho[E](x)\leq0,
\end{equation*}
for every $x\in\partial E\cap B_r(x_0)$, proving \eqref{EL_eq_inside}.
\end{proof}

Furthermore, we recall that if $E\subset\R^n$ is $s$-minimal in $\Omega$, then the singular set
$\Sigma(E;\Omega)\subset\partial E\cap\Omega$
has Hausdorff dimension at most $n-3$ (by the dimension reduction argument developed in Section 10 of \cite{nms}
and Corollary 2 of \cite{sing_cones}).

Now suppose that $E$ is locally $s$-minimal in an open set $\Omega$.
We observe that
we can find a sequence of bounded open sets with Lipschitz
boundaries $\Omega_k\subset\subset\Omega$ such that $\bigcup\Omega_k=\Omega$
(see e.g. Corollary 2.6 in \cite{mine_cyl_stuff}). Since $E$ is $s$-minimal in each $\Omega_k$ and
$\Sigma(E;\Omega)=\bigcup\Sigma(E;\Omega_k)$, we get in particular
\begin{equation}\label{singset}
\Ha^{n-2}(\Sigma(E;\Omega))\leq\sum_{k=1}^\infty\Ha^{n-2}(\Sigma(E;\Omega_k))=0
\end{equation}
(and indeed $\Sigma(E;\Omega)$ has Hausdorff dimension at most $n-3$, since we have inequality \eqref{singset}
with $n-d$ in place of $n-2$, for every $d\in[0,3)$).

As a consequence, a (locally) $s$-minimal set is a classical solution of the Euler-Lagrange equation,
in the following sense
\begin{theorem}\label{classicalsense}
Let $\Omega\subset\R^n$ be an open set and let $E$ be locally $s$-minimal in $\Omega$.
Then
\begin{equation*}
\I_s[E](x)=0\qquad\textrm{for every }x\in(\partial E\cap\Omega)\setminus\Sigma(E;\Omega),
\end{equation*}
and hence in particular for $\Ha^{n-1}$-a.e. $x\in\partial E\cap\Omega$.
\end{theorem}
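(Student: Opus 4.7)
The plan is to combine the interior regularity theory recalled just before the statement with the pointwise Euler--Lagrange result of Theorem \ref{EL_inside}, and then use the Hausdorff dimension bound on the singular set to upgrade the conclusion to an almost everywhere statement.

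First I would fix an arbitrary regular point $x\in(\partial E\cap\Omega)\setminus\Sigma(E;\Omega)$. By the very definition of the singular set, $\partial E$ is smooth (in particular $C^{1,\alpha}$, and indeed $C^\infty$ by the bootstrap result of \cite{Bernstein} that was recalled above) in some neighborhood $B_r(x)\subset\subset\Omega$. In particular, $\partial E$ admits at $x$ both an interior and an exterior tangent ball of positive radius, both of which we may assume contained in $B_r(x)$ after possibly shrinking $r$. Since $E$ is locally $s$-minimal in $\Omega$, it is $s$-minimal in the bounded open set $B_r(x)$, hence the hypotheses of Theorem \ref{EL_inside} are satisfied at $x$. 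Applying that theorem gives at once
\[\I_s[E](x)=0,\]
and in fact the same equation holds at every point of $\partial E\cap B_r(x)$.

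For the second assertion, I would invoke the singular-set estimate \eqref{singset}, which gives $\mathcal H^{n-2}(\Sigma(E;\Omega))=0$, and thus a fortiori
\[\mathcal H^{n-1}(\Sigma(E;\Omega))=0.\]
Since $\partial E\cap\Omega$ is covered by the regular set (where $\I_s[E]=0$ by the first part) together with the $\mathcal H^{n-1}$-negligible singular set, the Euler--Lagrange equation holds for $\mathcal H^{n-1}$-a.e.\ $x\in\partial E\cap\Omega$, as claimed.

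The main (and essentially only) subtlety is to make sure that the local regularity is available at every point of the regular set, independently of whether a tangent ball is assumed \emph{a priori}; this is guaranteed by the definition of $\Sigma(E;\Omega)$ itself, together with Theorem 1.1 of \cite{Bernstein}, which promotes $C^{1,\alpha}$ regularity to $C^\infty$ and hence ensures the existence of the tangent balls needed to apply Theorem \ref{EL_inside}. Once this is in place, nothing more is needed, and the proof is a direct assembly of the previously recalled facts.
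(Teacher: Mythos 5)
Your proposal is correct and matches the paper's intended reasoning: the paper presents Theorem \ref{classicalsense} as an immediate consequence of the interior regularity discussion (Theorem \ref{EL_inside} plus the singular set estimate \eqref{singset}) and does not give a separate proof environment, which is precisely the assembly you carry out. Your identification of the one subtlety—that the existence of tangent balls at regular points is supplied by the $C^\infty$ bootstrap rather than assumed—is accurate, and the step from $\Ha^{n-2}(\Sigma(E;\Omega))=0$ to $\Ha^{n-1}(\Sigma(E;\Omega))=0$ is sound since finiteness of a lower-dimensional Hausdorff measure forces vanishing of all higher-dimensional ones.
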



\subsection{Boundary Euler-Lagrange inequalities for the fractional perimeter}\label{appendicite3}
We recall that a set $E$ is locally $s$-minimal in an open set $\Omega$ if it is $s$-minimal in every bounded open set compactly contained in $\Omega$.
In this section we show that the Euler-Lagrange equation of a locally $s$-minimal set $E$ holds (at least as an inequality)
also at a point $p\in\partial E\cap\partial\Omega$,
provided that the boundary $\partial E$ and the boundary $\partial\Omega$ do not intersect
``transversally'' in $p$.

To be more precise, we prove the following

\begin{theorem}\label{EL_boundary_coroll}
Let $s\in(0,1)$. Let $\Omega\subset\R^n$ be an open set and let $E\subset\R^n$
be locally $s$-minimal in $\Omega$. Suppose that $p\in\partial E\cap\partial\Omega$ is such that
$\partial\Omega$ is $C^{1,1}$ in $B_{R_0}(p)$, for some $R_0>0$. Assume also that
\eqlab{\label{obstaclehp}
B_{R_0}(p)\setminus\Omega\subset\Co E.
}
Then
\begin{equation*}
\I_s[E](p)\leq0.
\end{equation*}
Moreover, if there exists $R\in(0,R_0)$ such that
\begin{equation}\label{detach_hp}
\partial E\cap\big(\Omega\cap B_r(p)\big)\not=\emptyset\qquad\textrm{for every }r\in(0,R),
\end{equation}
then $$\I_s[E](p)=0.$$
\end{theorem}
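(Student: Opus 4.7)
The first observation is geometric: since $\partial\Omega$ is $C^{1,1}$ in $B_{R_0}(p)$, it satisfies an exterior ball condition at $p$, and so there exists an exterior tangent ball $B_{\rho_0}(y_0)\subset\Co\Omega$, with $p\in\partial B_{\rho_0}(y_0)$, for some $\rho_0\in(0,R_0)$. By hypothesis \eqref{obstaclehp}, this ball is entirely contained in $\Co E$; thus $E$ admits an exterior tangent ball at $p$, and in particular the limit defining $\I_s[E](p)$ exists in $[-\infty,+\infty]$.

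To prove the inequality $\I_s[E](p)\leq 0$, the plan is to show that $E$ is an $s$-subsolution in the enlarged open set $\tilde\Omega:=\Omega\cup B_{R_0}(p)$ and then apply the subsolution counterpart of Theorem \ref{viscsol}, namely \eqref{subsolution_ineq}. Given a competitor $F\subset E$ with $F\setminus\tilde\Omega=E\setminus\tilde\Omega$ and $E\triangle F$ bounded, the obstacle hypothesis \eqref{obstaclehp} forces $F\cap(B_{R_0}(p)\setminus\Omega)\subset E\cap(B_{R_0}(p)\setminus\Omega)=\emptyset$; consequently $F$ coincides with $E$ on all of $\Co\Omega$ and is therefore a valid competitor for $E$ in $\Omega$. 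An elementary computation using the pairwise representation of the fractional perimeter shows that, for any $\Omega'\subset\subset\Omega$ containing $E\triangle F$, the identity
\[
P_s(E,\tilde\Omega)-P_s(F,\tilde\Omega)=P_s(E,\Omega)-P_s(F,\Omega)=P_s(E,\Omega')-P_s(F,\Omega')
\]
holds; the first equality comes from \eqref{obstaclehp} (which makes the interactions across $B_{R_0}(p)\setminus\Omega$ cancel between $E$ and $F$), while the second is the standard independence from the choice of domain when the modification is localized. The right-hand side is non-positive by the local $s$-minimality of $E$ in $\Omega$, and hence $E$ is an $s$-subsolution in $\tilde\Omega$. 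Applying \eqref{subsolution_ineq} at the point $p\in\partial E\cap\tilde\Omega$ with the exterior tangent ball contained in $\tilde\Omega$, we deduce $\limsup_{\rho\to 0^+}\I_s^\rho[E](p)\leq 0$, and so $\I_s[E](p)\leq 0$.

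For the equality under hypothesis \eqref{detach_hp}, we need to complement the above with the reverse inequality $\I_s[E](p)\geq 0$. Hypothesis \eqref{detach_hp} provides a sequence $q_k\in\partial E\cap\Omega$ with $q_k\to p$, which by Theorem \ref{classicalsense} may be chosen among the regular points of $\partial E$, so that $\I_s[E](q_k)=0$ for every $k$. To conclude $\I_s[E](p)=\lim_k\I_s[E](q_k)=0$ we invoke the continuity of the fractional mean curvature (Proposition \ref{rsdfyish}). This requires $C^{1,\alpha}$ regularity of $\partial E$ in a neighborhood of $p$, which follows from combining the interior $C^\infty$ regularity of $\partial E$ along the $q_k$ with a boundary regularity argument that exploits the $C^2$ regularity of $\partial\Omega$, the exterior tangent ball at $p$, and the obstacle condition \eqref{obstaclehp}.

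The principal technical obstacle lies in the equality case, namely in establishing enough regularity of $\partial E$ up to the boundary contact point $p\in\partial\Omega$ to justify the limit passage through Proposition \ref{rsdfyish}. By contrast, the subsolution-based inequality $\I_s[E](p)\leq 0$ is relatively routine once one notices that \eqref{obstaclehp} converts the local $s$-minimality of $E$ in $\Omega$ into the $s$-subsolution property in the enlarged domain $\tilde\Omega$.
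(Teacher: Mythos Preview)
Your argument for the inequality $\I_s[E](p)\le 0$ is close to the paper's and essentially correct in spirit. The paper phrases it via the geometric obstacle problem of \cite{GeomObst}: under hypothesis \eqref{obstaclehp} the complement $\Co E$ minimizes the $s$-perimeter in $B_{R_0/4}(p)$ among sets containing the obstacle $\mathcal O:=B_{R_0/4}(p)\setminus\overline\Omega$, so $E$ is a viscosity subsolution in $B_{R_0/4}(p)$ and the exterior tangent ball yields $\limsup_{\rho\to0^+}\I_s^\rho[E](p)\le0$. Your ``subsolution in the enlarged domain $\tilde\Omega$'' is an equivalent reformulation. One technical point you glossed over: $E$ is only \emph{locally} $s$-minimal in $\Omega$, and the set $E\triangle F$ may accumulate at $\partial\Omega$, so there need not exist $\Omega'\subset\subset\Omega$ containing it; the paper first replaces $\Omega$ by a bounded $C^{1,1}$ subdomain $\Omega'$ with $\Omega'\cap B_{R_0/2}(p)=\Omega\cap B_{R_0/2}(p)$, in which $E$ is genuinely $s$-minimal (Theorem 1.7 of \cite{mine_cyl_stuff}), and then your computation goes through.

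The real gap is in the equality case, and you have located it yourself. The passage $\I_s[E](q_k)\to\I_s[E](p)$ via Proposition \ref{rsdfyish} requires $\partial E$ to be $C^{1,\alpha}$ with $\alpha>s$ in a \emph{full neighborhood} of $p$, not just at the interior points $q_k$. Your sentence ``a boundary regularity argument that exploits the $C^2$ regularity of $\partial\Omega$, the exterior tangent ball at $p$, and the obstacle condition'' is precisely where the substance lies, and it cannot be obtained by patching interior $C^\infty$ regularity with an exterior ball. The paper supplies this via Theorem 1.1 of \cite{GeomObst}: solutions of the fractional obstacle problem with a $C^{1,1}$ obstacle have $C^{1,\sigma}$ boundary with $\sigma=(s+1)/2>s$. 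This is a nontrivial external regularity theorem, and without it the equality part is incomplete. (Two small side remarks: the hypothesis is $\partial\Omega\in C^{1,1}$, not $C^2$; and once the $C^{1,\sigma}$ regularity near $p$ is in hand, interior bootstrapping gives that $\partial E\cap(\Omega\cap B_R(p))$ is $C^\infty$, so every $q_k$ is automatically regular and there is no need to appeal to Theorem \ref{classicalsense} and density of regular points.)
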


We remark that
by hypothesis the open set $B_{R_0}(p)\setminus\overline{\Omega}$ is tangent to $E$ at $p$, from the outside.
Therefore, either \eqref{detach_hp} holds true, meaning roughly speaking that the boundary of $E$ detaches
from the boundary of $\Omega$ at $p$ (towards the interior of $\Omega$),
or $\partial E$ coincides with $\partial\Omega$ near $p$. See Figure \ref{fig:el}.

\begin{figure}[htbp]
\begin{center}
\includegraphics[width=120mm]{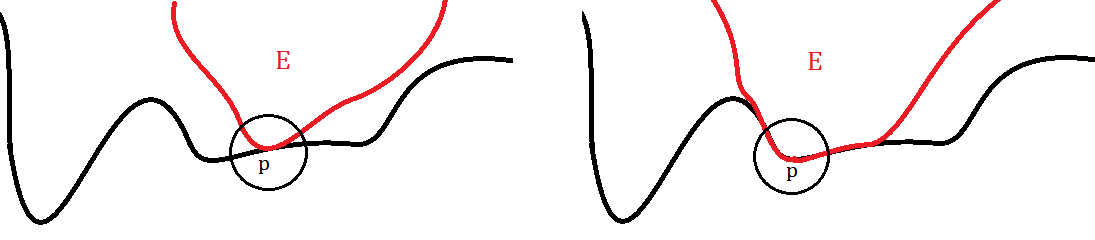}
\caption{\it Examples of a set which satisfies \eqref{detach_hp} (on the left)
and of a set whose boundary sticks to that of $\Omega$ near $p$ (on the right)}
\label{fig:el}
\end{center}
\end{figure}

Roughly speaking, the idea of the proof of Theorem \ref{EL_boundary_coroll} is the following.
The set $\mathcal O:=B_{R_0}(p)\setminus\overline{\Omega}$
plays the role of an obstacle in the minimization of the $s$-perimeter in $B_{R_0}(p)$.
The (local) minimality of $E$ in $\Omega$, together with hypothesis
\eqref{obstaclehp}, implies that $E$ solves this geometric obstacle type problem, which has
been investigated in \cite{GeomObst}. As a consequence, the set $E$ is a viscosity subsolution in $B_{R_0}(p)$ and we obtain that
$\I_s[E](p)\le0$.
Furthermore, the regularity result proved in \cite{GeomObst} guarantees that $\partial E$ is $C^{1,\sigma}$, with $\sigma>s$, near $p$. Thus, if $\partial E$ satisfies \eqref{detach_hp}, then we can exploit the Euler-Lagrange equation inside $\Omega$
and the continuity of $\I_s[E]$ to prove that $\I_s[E](p)=0$.

We now proceed to give a rigorous proof of Theorem \ref{EL_boundary_coroll}.

\begin{proof}[Proof of Theorem \ref{EL_boundary_coroll}]
We begin by observing that we can find a bounded and connected open set $\Omega'\subset\Omega$ such that
\begin{equation*}
\partial\Omega'\textrm{ is }C^{1,1}\qquad\textrm{and}\qquad
\Omega'\cap B_\frac{R_0}{2}(p)=\Omega\cap B_\frac{R_0}{2}(p).
\end{equation*}
Then, since $E$ is locally $s$-minimal in $\Omega$, we know that it is locally $s$-minimal also in $\Omega'$.
Hence, since $\Omega'$ is bounded and has regular boundary, by Theorem 1.7 of \cite{mine_cyl_stuff} we find that
$E$ is actually $s$-minimal in $\Omega'$.
Moreover $p\in\partial E\cap\partial\Omega'$
and
\begin{equation*}
B_\frac{R_0}{2}(p)\setminus\Omega'=B_\frac{R_0}{2}(p)\setminus\Omega
\subset B_{R_0}(p)\setminus\Omega\subset\Co E.
\end{equation*}
Therefore, we can suppose without loss of generality that $\Omega$ is a bounded and connected open set with $C^{1,1}$ boundary
$\partial\Omega$ and that $E$ is $s$-minimal in $\Omega$.

\smallskip

As observed in the proof of Theorem 5.1 of \cite{Graph}, the minimality of $E$ and hypothesis
\eqref{obstaclehp} imply that the set $\Co E$ is a solution, in $B_\frac{R_0}{4}(p)$, of the geometric obstacle type problem considered in \cite{GeomObst}.

More precisely, we remark that we can find a bounded and connected open set $\mathcal O$ with $C^{1,1}$ boundary, such that
\bgs{
\mathcal O\cap B_\frac{R_0}{4}(p)=B_\frac{R_0}{4}(p)\setminus\overline{\Omega}.
}
Then hypothesis \eqref{obstaclehp} guarantees that
\bgs{
\mathcal O\cap B_\frac{R_0}{4}(p)\subset\Co E.
}
Now, by arguing as in the proof of Theorem 5.1 of \cite{Graph},
we find that the minimality of $E$ (hence also of $\Co E$) in $\Omega$
implies that
\bgs{
P_s\Big(\Co E,B_\frac{R_0}{4}(p)\Big)
\le P_s\Big(F,B_\frac{R_0}{4}(p)\Big),
}
for every $F\subset\R^n$ such that
\bgs{
F\setminus B_\frac{R_0}{4}(p)=\Co E\setminus B_\frac{R_0}{4}(p)
\quad\mbox{and}\quad\mathcal O\cap B_\frac{R_0}{4}(p)\subset F.
}
In particular, as observed in \cite{GeomObst} (see the comment (2.2) there), the set $\Co E$ is a viscosity supersolution in $B_\frac{R_0}{4}(p)$, meaning that
the set $E$ is a viscosity subsolution in $B_\frac{R_0}{4}(p)$. Now, since the set $\Omega$ has $C^{1,1}$ boundary,
we can find an exterior tangent ball at $p\in\partial\Omega$.
By hypothesis \eqref{obstaclehp}, this means that we can find an exterior tangent ball at $p\in\partial E$ and hence
we have
\eqlab{\label{obst1}
\limsup_{\rho\to0^+}\I_s^\rho[E](p)\le0.
}
Furthermore, Theorem 1.1 of \cite{GeomObst} guarantees that $\partial E$ is $C^{1,\sigma}$ in $B_{R_0'}(p)$
for some $R'_0\in(0,R_0)$, and $\sigma:=\frac{1+s}{2}$ (see also Theorem 5.1 of \cite{Graph}).
In particular, since $\sigma>s$, we know that the $s$-fractional mean curvature of $E$ is well defined at $p$.
Therefore \eqref{obst1} actually implies that~$\I_s[E](p)\le0$, as claimed.

Now we suppose in addition
that \eqref{detach_hp} holds true, i.e. that
\begin{equation*}
\partial E\cap\big(\Omega\cap B_r(p)\big)\not=\emptyset\qquad\textrm{for every }r\in(0,R),
\end{equation*}
with $R<R'_0$.
By Theorem 1.1 of \cite{Bernstein} we know that $\partial E\cap\big(B_R(p)\cap\Omega\big)$ is $C^\infty$.
In particular, as observed in Theorem \ref{EL_inside}, we know that every point $x\in\partial E\cap\big(B_R(p)\cap\Omega\big)$ satisfies the Euler-Lagrange
equation in the classical sense, i.e.
\begin{equation}\label{EL_proof_eq1}
\I_s[E](x)=0\qquad\textrm{for every }x\in\partial E\cap\big(B_R(p)\cap\Omega\big).
\end{equation}
Since $\partial E\cap B_R(p)$ is $C^{1,\sigma}$, with $\sigma>s$,
we also know that $\I_s[E]\in C(\partial E\cap B_R(p))$ (by, e.g., Proposition \ref{rsdfyish} or Lemma 3.4 of \cite{Graph}).
Finally, we observe that by \eqref{detach_hp} we can find a sequence of points $x_k\in\partial E\cap\big(B_R(p)\cap\Omega\big)$
such that $x_k\longrightarrow p$.
Then, by the continuity of $\I_s[E]$ and \eqref{EL_proof_eq1} we get
\begin{equation*}
\I_s[E](p)=\lim_{k\to\infty}\I_s[E](x_k)=0,
\end{equation*}
concluding the proof.
\end{proof}


    \subsection{A maximum principle}
By exploiting the Euler-Lagrange equation, we can compare an $s$-minimal set with half spaces.
We show that if $E$ is $s$-minimal in $\Omega$ and the exterior data $E_0:=E\setminus\Omega$ lies above a half-space, then
also $E\cap\Omega$ must lie above that same half-space. This is indeed
a very general principle, that we now discuss in full detail.
To this aim, it is convenient to point out that
if $E\subset F$ and the boundaries of the two sets touch at a common point $x_0$ where the $s$-fractional mean curvatures coincide, then the two sets must be equal.
The precise result goes as follows:

\begin{lemma}\label{curv_rigidity}
Let $E,F\subset\R^n$ be such that $E\subset F$ and $x_0\in\partial E\cap\partial F$. Then
\begin{equation}\label{confront_curv_ineq}
\I_s^\rho[E](x_0)\geq\I_s^\rho[F](x_0)\qquad\textrm{for every }\rho>0.
\end{equation}
Furthermore, if
\begin{equation}\label{ineq_for_curvs}
\liminf_{\rho\to0^+}\I_s^\rho[F](x_0)\geq a\quad\textrm{and}\quad\limsup_{\rho\to0^+}\I_s^\rho[E](x_0)\leq a,
\end{equation}
then $E=F$, the fractional mean curvature is well defined in $x_0$ and $\I_s[E](x_0)=a$.

\begin{proof}
To get $(\ref{confront_curv_ineq})$ it is enough to notice that
\[
E\subset F\quad\Longrightarrow\quad\big(\chi_{\Co E}(y)-\chi_E(y)\big)
\geq\big(\chi_{\Co F}(y)-\chi_F(y)\big)\qquad\forall\,y\in\R^n.
\]
Now suppose that $(\ref{ineq_for_curvs})$ holds true. Then by $(\ref{confront_curv_ineq})$ we find that
\[
\exists\,\lim_{\rho\to0^+}\I_s[E](x_0)=\lim_{\rho\to0^+}\I_s[F](x_0)=a.
\]

To conclude, notice that if the two curvatures are well defined (in the principal value sense) in $x_0$ and are equal,
then
\begin{equation*}\begin{split}
0\leq\int_{\Co B_\rho(x_0)}&\frac{\big(\chi_{\Co E}(y)-\chi_E(y)\big)-\big(\chi_{\Co F}(y)-\chi_F(y)\big)}{|x_0-y|^{n+s}}dy\\
&
=\I_s^\rho[E](x_0)-\I_s^\rho[F](x_0)\xrightarrow{\rho\to0^+}0,
\end{split}\end{equation*}
which implies that $\chi_E(y)=\chi_F(y)$ for a.e. $y\in\R^n$, i.e. $E=F$.
\end{proof}\end{lemma}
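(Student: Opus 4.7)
The plan is to prove the inequality (\ref{confront_curv_ineq}) by a direct pointwise comparison of integrands, and then to extract the rigidity statement by showing that the gap between the two truncated curvatures is controlled by an integral of $\chi_{F\setminus E}$ against a positive weight, which by monotone convergence must vanish.

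For the first part, I would observe that the inclusion $E\subset F$ gives $\Co F\subset\Co E$, so pointwise
\[\chi_{\Co E}(y)-\chi_E(y)-\bigl(\chi_{\Co F}(y)-\chi_F(y)\bigr)=2\chi_{F\setminus E}(y)\ge 0.\]
Dividing by the positive kernel $|x_0-y|^{-n-s}$ and integrating over $\Co B_\rho(x_0)$ yields (\ref{confront_curv_ineq}) at once, with explicit remainder
\[\I_s^\rho[E](x_0)-\I_s^\rho[F](x_0)=2\int_{\Co B_\rho(x_0)}\frac{\chi_{F\setminus E}(y)}{|x_0-y|^{n+s}}\,dy\ge 0.\]

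For the rigidity part, I would combine (\ref{confront_curv_ineq}) with the two-sided hypothesis (\ref{ineq_for_curvs}). Passing to $\liminf_{\rho\to 0^+}$ in (\ref{confront_curv_ineq}) and using the first inequality in (\ref{ineq_for_curvs}) gives $\liminf_{\rho\to 0^+}\I_s^\rho[E](x_0)\ge a$; coupling this with the second inequality in (\ref{ineq_for_curvs}) forces the limit of $\I_s^\rho[E](x_0)$ to exist and equal $a$, and the same sandwich then yields the analogous statement for $F$. In particular both principal values are well defined at $x_0$ and coincide, so
\[\lim_{\rho\to 0^+}\int_{\Co B_\rho(x_0)}\frac{\chi_{F\setminus E}(y)}{|x_0-y|^{n+s}}\,dy=0.\]

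Finally, I would apply monotone convergence: as $\rho\searrow 0$ the integrands $\chi_{\Co B_\rho(x_0)}(y)\,\chi_{F\setminus E}(y)\,|x_0-y|^{-n-s}$ are nonnegative and increase pointwise to $\chi_{F\setminus E}(y)\,|x_0-y|^{-n-s}$, so the displayed limit equals the full integral $\int_{\R^n}\chi_{F\setminus E}(y)|x_0-y|^{-n-s}\,dy$. Since the weight is strictly positive, this forces $|F\setminus E|=0$, and hence (under the measure-theoretic conventions of Section \ref{MEAS:ASS:SEC}) $E=F$. No step looks genuinely delicate here; the only point one must be careful about is that the mere existence of the limit of $\I_s^\rho[E](x_0)$ is not assumed but must be deduced, which is exactly what the squeeze between (\ref{ineq_for_curvs}) and the monotone inequality (\ref{confront_curv_ineq}) provides.
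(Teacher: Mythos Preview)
Your proof is correct and follows essentially the same approach as the paper's: a pointwise comparison of the integrands to get \eqref{confront_curv_ineq}, then a squeeze argument from \eqref{ineq_for_curvs} to force both principal values to exist and coincide, and finally the observation that the vanishing of the nonnegative gap integral forces $|F\setminus E|=0$. Your version is slightly more explicit (identifying the difference as $2\chi_{F\setminus E}$ and naming monotone convergence), but the argument is the same.
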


\begin{prop}\label{maximum_principle}[Maximum Principle]
Let $\Omega\subset\R^n$ be a bounded open set with $C^{1,1}$ boundary. Let $s\in(0,1)$ and let $E$ be 
$s$-minimal in $\Omega$. If
\begin{equation}\label{ext_data_incl}
\{x\cdot\nu\leq a\}\setminus\Omega\subset\Co E,\end{equation}
for some $\nu\in\mathbb S^{n-1}$ and $a\in\R$, then
\[\{x\cdot\nu\leq a\}\subset \Co E.\]

\begin{proof}
First of all, we remark that up to a rotation and translation, we can suppose that $\nu=e_n$ and $a=0$.
Furthermore we can assume that
\[
\inf_{x\in\overline{\Omega}}x_n<0,
\]
otherwise there is nothing to prove.

If $E\cap\Omega=\emptyset$, i.e. $\Omega\subset\Co E$, we are done.
Thus we can suppose that $E\cap\Omega\not =\emptyset$.\\
Since $\overline{E}\cap\overline{\Omega}$ is compact, we 
have
\[
b:=\min_{x\in\overline{E}\cap\overline{\Omega}}x_n\in\R.
\]
Now we consider the set of points which realize the minimum
above, namely we set
$$\mathcal P:=\{p\in\overline{E}\cap\overline{\Omega}\,|\,p_n=b\}.$$
Notice that
\begin{equation}\label{confronto_first_incl}
\big\{x_n\leq\min\{b,0\}\big\}\subset\Co E,
\end{equation}
so we are reduced to prove that $b\geq0$.

We argue by contradiction and suppose that $b<0$. We will prove that $\mathcal P=\emptyset$.
We remark that $\mathcal P\subset\partial E\cap\overline{\Omega}$.

Indeed, if $p\in\mathcal P$, then by $(\ref{confronto_first_incl})$ we have that
$B_\delta(p)\cap\{x_n\leq b\}\subset\Co E$
for every $\delta >0$, so $|B_\delta(p)\cap\Co E|\geq\frac{\omega_n}{2}\delta^n$
and $p\not\in E_{int}$. Therefore, since $\overline{E}=E_{int}\cup\partial E$, we 
find that $p\in\partial E$.

Roughly speaking, we are sliding upwards the half-space $\{x_n\leq t\}$ until we first touch the set $\overline{E}$. Then the contact points must belong to the boundary of $E$.

Notice that the points of $\mathcal P$ can be either inside $\Omega$ or on $\partial\Omega$.
In both cases we can use the Euler-Lagrange equation to get a contradiction. The precise argument goes as follows.

First, if $p=(p',b)\in\partial E\cap\Omega$, then since $H:=\{x_n\leq b\}\subset\Co E$,
we can find an exterior tangent ball to $E$ at $p$ (contained in $\Omega$), so $\I_s[E](p)=0$.

On the other hand, if $p\in\partial E\cap\partial\Omega$, then
$B_{|b|}(p)\setminus\Omega\subset\Co E$ and hence (by Theorem 5.1 of \cite{Graph})
$\partial E\cap B_r(p)$ is $C^{1,\frac{s+1}{2}}$ for some $r\in(0,|b|)$, and $\I_s[E](p)\leq0$ by Theorem \eqref{EL_boundary_coroll} .

In both cases, we have that
\[
p\in\partial H\cap\partial E,\quad H\subset \Co E\quad\textrm{and}\quad\I_s[\Co E](p)=-\I_s[E](p)\geq0=\I_s[H](p),
\]
and hence Lemma \ref{curv_rigidity} implies $\Co E=H$. However, since $b<0$, this contradicts
$(\ref{ext_data_incl})$.

This proves that $b\geq0$, thus concluding the proof.
\end{proof}
\end{prop}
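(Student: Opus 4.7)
The plan is a sliding-hyperplane argument combined with the Euler-Lagrange equation and the curvature rigidity Lemma \ref{curv_rigidity}. After a rotation and translation I reduce to $\nu=e_n$, $a=0$; if $\inf_{\overline{\Omega}}x_n\ge 0$ or $E\cap\Omega=\emptyset$ the conclusion is immediate, so I assume both are non-trivial. By compactness the quantity
\[
b:=\min\{x_n\,|\,x\in\overline{E}\cap\overline{\Omega}\}
\]
is finite, and the goal is to show $b\ge 0$; I argue by contradiction assuming $b<0$. Set $\mathcal{P}:=\{p\in\overline{E}\cap\overline{\Omega}\,|\,p_n=b\}$ and $H:=\{x_n\le b\}$. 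The minimality of $b$ gives $\{x_n<b\}\cap\overline{\Omega}\cap\overline{E}=\emptyset$, while points of $H$ outside $\Omega$ lie in $\{x_n\le 0\}\setminus\Omega\subset\Co E$ by hypothesis (using $b<0$). These two facts together imply that $\mathcal{P}\subset\partial E$ (a small ball around any $p\in\mathcal{P}$ contains a half-ball worth of $\Co E$, so $p$ cannot be a measure-theoretic interior point of $E$) and that every $p\in\mathcal P$ admits an exterior tangent ball to $E$ contained in $H$.

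Next I apply the Euler-Lagrange equation at an arbitrary point $p\in\mathcal{P}$, distinguishing the interior and boundary cases. If $p\in\partial E\cap\Omega$, the interior theory of Appendix \ref{brr2} (Theorem \ref{EL_inside}) combined with the exterior tangent ball just produced yields $\I_s[E](p)=0$. If $p\in\partial E\cap\partial\Omega$, I observe that $b<0$ and $p_n=b$ force
\[
B_{|b|}(p)\setminus\Omega\subset\{x_n\le 0\}\setminus\Omega\subset\Co E,
\]
so the boundary Euler-Lagrange inequality of Theorem \ref{EL_boundary_coroll} applies and yields $\I_s[E](p)\le 0$. In either case $\I_s[\Co E](p)=-\I_s[E](p)\ge 0$.

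Finally I invoke Lemma \ref{curv_rigidity} on the inclusion $H\subset\Co E$ at the common boundary point $p$, with threshold $a=0$: the half-space satisfies $\I_s[H](p)=0$ by symmetry of the kernel, so both hypotheses $\limsup_{\rho\to 0^+}\I_s^\rho[H](p)\le 0$ and $\liminf_{\rho\to 0^+}\I_s^\rho[\Co E](p)\ge 0$ of the lemma are met, and its conclusion forces $\Co E=H$. Since $\Omega$ is bounded there exists a point $q\notin\overline{\Omega}$ with $q_n=0$; the original hypothesis places $q\in\Co E$, but $q_n=0>b$ excludes $q$ from $H=\Co E$, a contradiction that settles $b\ge 0$ and hence the maximum principle. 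The main technical point is the boundary touching case: one must verify by hand that the obstacle-type hypothesis of Theorem \ref{EL_boundary_coroll} holds at $p\in\partial E\cap\partial\Omega$, and this is precisely where the contradiction sign $b<0$ is used.
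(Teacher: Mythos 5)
Your proof is correct and follows the same route as the paper: reduce to $\nu=e_n$, $a=0$, slide the hyperplane $\{x_n=t\}$ up to the first contact level $b$, show contact points lie on $\partial E$ with an exterior tangent ball in the half-space $H=\{x_n\le b\}$, treat interior and boundary contact via Theorem~\ref{EL_inside} and Theorem~\ref{EL_boundary_coroll} respectively, and conclude $\Co E=H$ via Lemma~\ref{curv_rigidity}, contradicting \eqref{ext_data_incl} because $\Omega$ is bounded. The only cosmetic difference is that you spell out the final contradiction with an explicit witness point $q$ at height $0$, which the paper leaves implicit.
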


{F}rom this, we obtain a strong comparison principle with planes,
as follows:

\begin{corollary}
Let $\Omega\subset\R^n$ be a bounded open set with $C^{1,1}$ boundary.
Let $E\subset\R^n$ be $s$-minimal in $\Omega$, with
$\{x_n\leq0\}\setminus\Omega\subset\Co E$.
Then

$(i)\quad$ if $|(\Co E\setminus\Omega)\cap\{x_n>0\})|=0$, then $E=\{x_n>0\}$;

$(ii)\quad$ if $|(\Co E\setminus\Omega)\cap\{x_n>0\}|>0$,
then for every $x=(x',0)\in\Omega\cap\{x_n=0\}$ there exists $\delta_x\in(0,d(x,\partial\Omega))$ s.t.
$B_{\delta_x}(x)\subset\Co E$. Thus
\begin{equation}
\{x_n\leq0\}\cup\bigcup_{(x',0)\in\Omega}B_{\delta_x}(x)\subset\Co E.
\end{equation}

\begin{proof}
First of all, Proposition \ref{maximum_principle} guarantees that
\begin{equation*}
\{x_n\leq0\}\subset\Co E.
\end{equation*}

$(i)\quad$ Notice that since $E$ is $s$-minimal in $\Omega$, also $\Co E$ is $s$-minimal in $\Omega$.\\
Thus, since $\{x_n>0\}\setminus\Omega\subset E=\Co(\Co E)$, we can use again Proposition \ref{maximum_principle}
(notice that $\{x_n=0\}$ is a set of measure zero) to
get $\{x_n>0\}\subset E$, proving the claim.

$(ii)\quad$ Let $x\in\{x_n=0\}\cap \Omega$.

We argue by contradiction. Suppose that $|B_\delta(x)\cap E|>0$ for every $\delta>0$.\\
Notice that,
since $B_\delta(x)\cap\{x_n\leq0\}\subset\Co E$ for every $\delta>0$, this implies that $x\in\partial E\cap \Omega$.
Moreover, we can find an exterior tangent ball to $E$ in $x$, namely
\begin{equation*}
B_\eps(x-\eps\,e_n)\subset\{x_n\leq0\}\cap\Omega\subset\Co E\cap\Omega.
\end{equation*}
Thus the Euler-Lagrange equation gives $\I_s[E](x)=0$.

Let $H:=\{x_n\leq0\}$.
Since $x\in\partial H$, $H\subset\Co E$ and also $\I_s[H](x)=0$,
Lemma \ref{curv_rigidity} implies $\Co E=H$.
However this contradicts the hypothesis
\begin{equation*}
|(\Co E\setminus\Omega)\cap\{x_n>0\}|>0,
\end{equation*}
which completes the proof.
\end{proof}\end{corollary}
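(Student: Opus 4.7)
The starting point is Proposition~\ref{maximum_principle}, which, applied with $\nu=e_n$ and $a=0$, immediately strengthens the exterior hypothesis to the global inclusion $\{x_n\le 0\}\subset\Co E$. From here, the plan is to handle (i) by a second invocation of the same maximum principle applied to the complementary minimizer $\Co E$, and to handle (ii) by a contact argument combined with the rigidity statement in Lemma~\ref{curv_rigidity}.

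For (i), since $E$ is $s$-minimal in $\Omega$ if and only if $\Co E$ is, the hypothesis $|(\Co E\setminus\Omega)\cap\{x_n>0\}|=0$ implies, modulo a set of Lebesgue measure zero that is harmless under the measure-theoretic conventions of Subsection~\ref{MEAS:ASS:SEC} together with the fact that $\{x_n=0\}$ is itself negligible, that $\{x_n\ge 0\}\setminus\Omega\subset E=\Co(\Co E)$. Applying Proposition~\ref{maximum_principle} to $\Co E$ with $\nu=-e_n$ and $a=0$ then yields $\{x_n\ge 0\}\subset E$; combining this with the inclusion $\{x_n\le 0\}\subset\Co E$ already established forces $E=\{x_n>0\}$.

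For (ii), I would argue by contradiction: suppose that at some $x=(x',0)\in\Omega\cap\{x_n=0\}$ one has $|B_\delta(x)\cap E|>0$ for every $\delta>0$. Since also $B_\delta(x)\cap\{x_n\le 0\}\subset\Co E$ has positive measure, the point $x$ lies neither in $E_{int}$ nor in $E_{ext}$, hence $x\in\partial E\cap\Omega$. For $\eps>0$ small enough (in particular $\eps<d(x,\partial\Omega)$), the ball $B_\eps(x-\eps e_n)$ is contained in $\{x_n<0\}\cap\Omega\subset\Co E\cap\Omega$ and is tangent to $E$ at $x$ from the outside, inside $\Omega$; the interior Euler--Lagrange equation in Theorem~\ref{EL_inside} then delivers $\I_s[E](x)=0$, and hence $\I_s[\Co E](x)=0$. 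Setting $H:=\{x_n\le 0\}$, the symmetry of the half-space gives $\I_s^\rho[H](x)=0$ for every $\rho>0$, so Lemma~\ref{curv_rigidity} applied to the inclusion $H\subset\Co E$ with $a=0$ forces the rigidity $\Co E=H$. This contradicts the hypothesis $|(\Co E\setminus\Omega)\cap\{x_n>0\}|>0$, so the supposition fails; there exists $\tilde\delta>0$ with $B_{\tilde\delta}(x)\subset\Co E$, and one picks $\delta_x:=\min\{\tilde\delta,\tfrac{1}{2}d(x,\partial\Omega)\}$ to meet the last requirement of the statement.

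The main technical point is to ensure that the exterior tangent ball to $E$ at $x$ lies strictly inside $\Omega$, so that the interior (two-sided) Euler--Lagrange equation is available rather than only the one-sided boundary inequality of Theorem~\ref{EL_boundary_coroll}. This is precisely why the corollary localizes the radius to $\delta_x<d(x,\partial\Omega)$, and it is the reason one must take $\eps<d(x,\partial\Omega)$ when constructing the ball $B_\eps(x-\eps e_n)$. Once this positioning is secured, the symmetry of the half-space and Lemma~\ref{curv_rigidity} do all the work.
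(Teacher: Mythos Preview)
Your proposal is correct and follows essentially the same route as the paper: first upgrade the exterior inclusion to $\{x_n\le0\}\subset\Co E$ via the maximum principle, then for (i) apply it again to $\Co E$, and for (ii) argue by contradiction using an exterior tangent ball at $x$, the interior Euler--Lagrange equation, and the rigidity Lemma~\ref{curv_rigidity} with $H=\{x_n\le0\}$. Your extra care about keeping the tangent ball inside $\Omega$ and choosing $\delta_x<d(x,\partial\Omega)$ is a welcome clarification, but the overall strategy and key ingredients are identical to the paper's.
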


%
%

\bibliography{biblio}

\begin{thebibliography}{10}

\bibitem{MR3393247}
Nicola Abatangelo.
\newblock Large {$S$}-harmonic functions and boundary blow-up solutions for the
  fractional {L}aplacian.
\newblock {\em Discrete Contin. Dyn. Syst.}, 35(12):5555--5607, 2015.

\bibitem{Abaty}
Nicola Abatangelo and Enrico Valdinoci.
\newblock A notion of nonlocal curvature.
\newblock {\em Numer. Funct. Anal. Optim.}, 35(7-9):793--815, 2014.

\bibitem{Ambrosio}
Luigi Ambrosio and Norman Dancer.
\newblock {\em Calculus of variations and partial differential equations}.
\newblock Springer-Verlag, Berlin, 2000.
\newblock Topics on geometrical evolution problems and degree theory, Papers
  from the Summer School held in Pisa, September 1996, Edited by G. Buttazzo,
  A. Marino and M. K. V. Murthy.

\bibitem{gammaconv}
Luigi Ambrosio, Guido De~Philippis, and Luca Martinazzi.
\newblock Gamma-convergence of nonlocal perimeter functionals.
\newblock {\em Manuscripta Math.}, 134(3-4):377--403, 2011.

\bibitem{bootstrap}
Bego{\~{n}}a {Barrios}, Alessio {Figalli}, and Enrico {Valdinoci}.
\newblock {Bootstrap regularity for integro-differential operators and its
  application to nonlocal minimal surfaces.}
\newblock {\em {Ann. Sc. Norm. Super. Pisa, Cl. Sci. (5)}}, 13(3):609--639,
  2014.

\bibitem{bucval}
Claudia Bucur and Enrico Valdinoci.
\newblock {\em Nonlocal diffusion and applications}, volume~20 of {\em Lecture
  Notes of the Unione Matematica Italiana}.
\newblock Springer, [Cham]; Unione Matematica Italiana, Bologna, 2016.

\bibitem{GeomObst}
Luis Caffarelli, Daniela De~Silva, and Ovidiu Savin.
\newblock Obstacle-type problems for minimal surfaces.
\newblock {\em Communications in Partial Differential Equations},
  41(8):1303--1323, 2016.

\bibitem{nms}
Luis Caffarelli, Jean-Michel Roquejoffre, and Ovidiu Savin.
\newblock Nonlocal minimal surfaces.
\newblock {\em Comm. Pure Appl. Math.}, 63(9):1111--1144, 2010.

\bibitem{uniform}
Luis Caffarelli and Enrico Valdinoci.
\newblock Uniform estimates and limiting arguments for nonlocal minimal
  surfaces.
\newblock {\em Calc. Var. Partial Differential Equations}, 41(1-2):203--240,
  2011.

\bibitem{regularity}
Luis Caffarelli and Enrico Valdinoci.
\newblock Regularity properties of nonlocal minimal surfaces via limiting
  arguments.
\newblock {\em Adv. Math.}, 248:843--871, 2013.

\bibitem{mattheorem}
Matteo Cozzi.
\newblock On the variation of the fractional mean curvature under the effect of
  {$C^{1,\alpha}$} perturbations.
\newblock {\em Discrete Contin. Dyn. Syst.}, 35(12):5769--5786, 2015.

\bibitem{davila}
Juan D\'avila.
\newblock On an open question about functions of bounded variation.
\newblock {\em Calc. Var. Partial Differential Equations}, 15(4):519--527,
  2002.

\bibitem{asympt1}
Serena {Dipierro}, Alessio {Figalli}, Giampiero {Palatucci}, and Enrico
  {Valdinoci}.
\newblock {Asymptotics of the $s$-perimeter as $s \searrow 0$.}
\newblock {\em {Discrete Contin. Dyn. Syst.}}, 33(7):2777--2790, 2013.

\bibitem{Graph}
Serena Dipierro, Ovidiu Savin, and Enrico Valdinoci.
\newblock Graph properties for nonlocal minimal surfaces.
\newblock {\em Calc. Var. Partial Differential Equations}, 55(4):Art. 86, 25,
  2016.

\bibitem{boundary}
Serena Dipierro, Ovidiu Savin, and Enrico Valdinoci.
\newblock Boundary behavior of nonlocal minimal surfaces.
\newblock {\em J. Funct. Anal.}, 272(5):1791--1851, 2017.

\bibitem{senonlocal}
Serena Dipierro and Enrico Valdinoci.
\newblock Nonlocal minimal surfaces: interior regularity, quantitative
  estimates and boundary stickiness.
\newblock In Giampiero Palatucci and Tuomo Kuusi, editors, {\em Recent
  Developments in the Nonlocal Theory}, chapter~4, pages 165--209. De Gruyter
  Open, 2018.

\bibitem{MR2985500}
Patricio Felmer and Alexander Quaas.
\newblock Boundary blow up solutions for fractional elliptic equations.
\newblock {\em Asymptot. Anal.}, 78(3):123--144, 2012.

\bibitem{Bernstein}
Alessio Figalli and Enrico Valdinoci.
\newblock Regularity and {B}ernstein-type results for nonlocal minimal
  surfaces.
\newblock {\em J. Reine Angew. Math.}, 729:263--273, 2017.

\bibitem{GilTru}
David Gilbarg and Neil~S. Trudinger.
\newblock {\em Elliptic partial differential equations of second order}.
\newblock Classics in Mathematics. Springer-Verlag, Berlin, 2001.
\newblock Reprint of the 1998 edition.

\bibitem{lukes}
Luca Lombardini.
\newblock Fractional perimeter and nonlocal minimal surfaces.
\newblock {\em arXiv preprint arXiv:1508.06241}, 2015.

\bibitem{mine_cyl_stuff}
Luca Lombardini.
\newblock Approximation of sets of finite fractional perimeter by smooth sets
  and comparison of local and global $s$-minimal surfaces.
\newblock {\em Interfaces Free Bound.}, 20(2):261--296, 2018.

\bibitem{Myfractal}
Luca Lombardini.
\newblock Fractional perimeters from a fractal perspective.
\newblock {\em arXiv preprint arXiv:1508.06241}, 2018.
\newblock To appear in Advanced Nonlinear Studies.

\bibitem{MR3168912}
Xavier Ros-Oton and Joaquim Serra.
\newblock The {D}irichlet problem for the fractional {L}aplacian: regularity up
  to the boundary.
\newblock {\em J. Math. Pures Appl. (9)}, 101(3):275--302, 2014.

\bibitem{MR3436398}
Xavier Ros-Oton and Enrico Valdinoci.
\newblock The {D}irichlet problem for nonlocal operators with singular kernels:
  convex and nonconvex domains.
\newblock {\em Adv. Math.}, 288:732--790, 2016.

\bibitem{sing_cones}
Ovidiu Savin and Enrico Valdinoci.
\newblock Regularity of nonlocal minimal cones in dimension 2.
\newblock {\em Calc. Var. Partial Differential Equations}, 48(1-2):33--39,
  2013.

\end{thebibliography}
\bibliographystyle{plain}

\end{document}